\def\set@curr@file#1{%
  \begingroup
    \escapechar\m@ne
    \xdef\@curr@file{%
      \expandafter\expandafter\expandafter\unquote@name
      \expandafter\expandafter\expandafter{%
      \expandafter\string
        \csname\@firstofone#1\@empty\endcsname}}%
  \endgroup
}
\newtheorem{theorem}[equation]{Theorem}
\newtheorem{lemma}[equation]{Lemma}
\newtheorem{prop}[equation]{Proposition}
\newtheorem{proposition}[equation]{Proposition}
\newtheorem{corollary}[equation]{Corollary}
\newtheorem{application}[equation]{Application}
\newtheorem{definition-lemma}[equation]{Definition-Lemma}
\theoremstyle{definition}
\newtheorem{definition}[equation]{Definition}
\newtheorem{example}[equation]{Example}
\theoremstyle{remark}
\newtheorem{remark}[equation]{Remark}
\numberwithin{equation}{section}
\numberwithin{figure}{section}
\newcommand{\bZ} {\mathbb{Z}}
\newcommand{\Z} {\mathbb{Z}}
\newcommand{\bR} {\mathbb{R}}
\newcommand{\R} {\mathbb{R}}
\newcommand{\bC} {\mathbb{C}}
\newcommand{\C} {\mathbb{C}}
\newcommand{\bP} {\mathbb{P}}
\newcommand{\bF} {\mathbb{F}}
\newcommand {\cE}  {\mathcal{E}}
\newcommand {\cF}  {\mathcal{F}}
\newcommand {\cG}  {\mathcal{G}}
\newcommand {\cI}  {\mathcal{I}}
\newcommand {\cJ}  {\mathcal{J}}
\newcommand {\cK}  {\mathcal{K}}
\newcommand {\cO}  {\mathcal{O}}
\newcommand {\cV}  {\mathcal{V}}
\newcommand {\cW} {\mathcal{W}}
\newcommand {\vv} {\mathbf{v}}
\newcommand{\nn}{\mathbf{n}}
\newcommand {\oD} {\bar{D}}
\newcommand {\oY} {\bar{Y}}
\newcommand{\prim}{\mathrm{prim}}
\newcommand {\io} {\iota}
\newcommand{\zm}{\Z^2 \setminus \{0\}}
\renewcommand {\ker} {\operatorname{ker}}
\newcommand {\id}  {\operatorname{id}}
\newcommand {\Id}  {\operatorname{Id}}
\newcommand {\Hom}  {\operatorname{Hom}}
\newcommand {\Ham}  {\operatorname{Ham}}
\newcommand {\Pic}  {\operatorname{Pic}}
\newcommand {\Bl}  {\mathrm{Bl}}
\newcommand {\cst}  {(\C^*)^2}
\newcommand {\SL}  {\operatorname{SL}}
\newcommand {\GL} {\operatorname{GL}}
\DeclareMathOperator{\Perf}{Perf}
\DeclareMathOperator{\coh}{coh}
\DeclareMathOperator{\perf}{perf}
\DeclareMathOperator{\ccoh}{\mathfrak{c}oh}
\DeclareMathOperator{\pperf}{\mathfrak{p}erf}
\DeclareMathOperator{\Auteq}{Auteq}
\DeclareMathOperator{\Aut}{Aut}
\newcommand{\dg}{\mathrm{dg}}
\newcommand{\gr}{\mathrm{gr}}
\newcommand{\sch}{\mathcal{C}}
\newcommand{\prs}{\mathrm{pro}\mathcal{C}}
\newcommand{\into}{\hookrightarrow}
\newcommand{\un}{\mathrm{univ}}
\newcommand{\univ}{\mathrm{univ}}
\newcommand{\Yuniv}{Y_\mathrm{univ}}
\newcommand{\Uuniv}{U_\mathrm{univ}}
\newcommand{\inv}{^{-1}}
\newcommand{\W}{\mathcal{W}}
\newcommand{\Symp}{\operatorname{Symp}}
\newcommand{\Bir}{\operatorname{Bir}}
\newcommand{\CY}{\operatorname{CY2}}
\newcommand{\holim@}[2]{%
  \vtop{\m@th\ialign{##\cr
    \hfil$#1\operator@font holim$\hfil\cr
    \noalign{\nointerlineskip\kern1.5\ex@}#2\cr
    \noalign{\nointerlineskip\kern-\ex@}\cr}}%
}
\newcommand{\holim}{%
  \mathop{\mathpalette\holim@{\rightarrowfill@\textstyle}}\nmlimits@
}
\newcommand{\white}{\textcolor{white}}
\def\mydate{\ifcase\month \or January\or February\or March\or
April\or May\or June\or July\or August\or September\or October\or 
November\or December\fi \space\number\day,\space\number\year}
\begin{document}

\title{
A universal mirror for $(\bP^2, \Omega)$ as a birational object
}

\author{Ailsa Keating}
\email{amk50@cam.ac.uk}
\address{Department of Pure Mathematics and Mathematical Statistics, Centre for Mathematical Sciences, University of Cambridge, Wilberforce Road, Cambridge, CB3 0WB}
\author{Abigail Ward}
\email{arw204@cam.ac.uk}
\address{Department of Pure Mathematics and Mathematical Statistics, Centre for Mathematical Sciences, University of Cambridge, Wilberforce Road, Cambridge, CB3 0WB}

\maketitle


\begin{abstract} 

We study homological mirror symmetry for $(\bP^2, \Omega)$ viewed as an object of birational geometry, with $\Omega$ the standard meromorphic volume form. First, we construct universal objects  on the two sides of mirror symmetry, focusing on the exact symplectic setting: a smooth complex scheme $U_\univ$ and a Weinstein manifold $M_\univ$, both of infinite type; and we prove homological mirror symmetry for them. Second, we consider autoequivalences. We prove that automorphisms of $U_\univ$ are given by a natural discrete subgroup of $\Bir (\bP^2, \pm \Omega)$; and that all of these automorphisms are mirror to symplectomorphisms of $M_\univ$. We conclude with some applications. 
\end{abstract}
 

\section{Introduction}

Let $\Omega$ denote a meromorphic volume form on $\bP^2$ with simple poles along the standard toric boundary divisor $\Delta$. The guiding question for this article is: can we make sense of a `mirror' to $(\bP^2, \Omega)$ as a object of birational geometry?


Homological mirror symmetry, in its most naive form, is about  isomorphisms between derived categories of coherent sheaves on one side  and Fukaya categories on the other. On the other hand, within algebraic geometry, the relation between derived equivalence and birationality is the subject of much study. This paper is a case study tying the two together in complex dimension two.
Our main theorems, below, give the universal examples in complex dimension two for the two sides of mirror symmetry, when focusing on the exact symplectic setting. 

The pair $(\bP^2, \Omega)$  is a key example. 
Assume that $(Y,D)$ is a log Calabi--Yau (CY) variety, i.e., a pair consisting of a closed smooth complex variety $Y$ together with an anti-canonical divisor $D \subset Y$. 
These are the open counterparts to compact Calabi--Yau varieties. 
In dimension two, any such $(Y,D)$ with $D$ singular has a toric model: through a sequence of blow downs and blow ups of points on $D$, the surface $Y$ is birational to $\bP^2$, with the image of $D$ under the birational equivalence compactifying to the boundary divisor $\Delta$. As such  $(\bP^2, \Omega)$  captures essentially all log Calabi-Yau surfaces. At the same time, such surfaces are an extremely rich class of examples: classical  mirror symmetry for log CY surfaces, which has been comprehensively established, includes a wealth of connections to the theory of surface singularities, and to cluster theory; see inter alia \cite{Auroux, Pascaleff, GHK1, GHK2, GHKK, Friedman, Engel, HK1, Cheung-Vianna, Collins-Jacob-Lin, HK2, Kim}. 

By work of Blanc \cite{Blanc}, we know that $\Bir(\bP^2, \pm\Omega)$ is generated by the subgroup $\Aut(\cst) \simeq \cst \rtimes \GL_2(\Z)$ and a given cluster transformation $E$. We consider the discrete subgroup $\Bir_{e}(\bP^2,  \pm \Omega) = \langle \GL_2(\Z), E \rangle.$ (See below for further discussion of this choice.) 

\begin{theorem} [Theorem \ref{thm:autisos}] There exists a smooth, infinite-type scheme $U_{\univ}$ over $\C$ such that 
\[ \Aut(U_{\univ}) \simeq \Bir_{e}(\bP^2, \pm \Omega). \]
\end{theorem}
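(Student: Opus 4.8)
\emph{Proof sketch.} The plan is to produce a homomorphism $\rho\colon\Bir_{e}(\bP^2,\pm\Omega)\to\Aut(U_{\univ})$ lifting the birational action on $\cst=\bP^2\setminus\Delta$ and a restriction homomorphism $r\colon\Aut(U_{\univ})\to\Bir(\bP^2,\pm\Omega)$; the content of the theorem is that $r$ lands inside $\Bir_{e}(\bP^2,\pm\Omega)$, for then $r$ and $\rho$ are mutually inverse. I use the following properties of $U_{\univ}$, established when it is constructed: it is a smooth integral separated surface, canonically birational to $\cst$; it carries a canonical, nowhere-vanishing regular volume form $\Omega_{\univ}$ restricting to $\tfrac{dx}{x}\wedge\tfrac{dy}{y}$ on this model, with $\cO(U_{\univ})^{*}=\C^{*}$, so that $\Omega_{\univ}$ is unique up to a constant; and it carries an intrinsic integral affine / scattering-diagram structure, from which in particular a countable configuration $\cV$ of curves (the images in $U_{\univ}$ of the exceptional curves of the iterated boundary blow-ups) can be read off. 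Being separated, $U_{\univ}$ has the rigidity property used repeatedly: an automorphism of $U_{\univ}$ is determined by the birational self-map of $\cst$ it induces.

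For $\rho$: each generator of $\Bir_{e}(\bP^2,\pm\Omega)$ acts on $U_{\univ}$ --- the monomial maps in $\GL_2(\Z)$ by functoriality of the construction with respect to the $\GL_2(\Z)$-symmetry of the underlying combinatorial data, and the cluster transformation $E$, a priori birational only on $\cst$, by a genuine automorphism, since $U_{\univ}$ is by design a model on which the indeterminacies of $E^{\pm1}$ and of all their transforms under the group have been resolved. By rigidity this assignment automatically respects every relation, so it defines $\rho$, which lifts the birational action and is hence injective. Conversely, an automorphism $\psi$ of $U_{\univ}$ induces a birational self-map $\bar\psi=r(\psi)$ of $\cst$; by rigidity $r$ is injective, and $r\circ\rho$ is the inclusion $\Bir_{e}(\bP^2,\pm\Omega)\hookrightarrow\Bir(\bP^2,\pm\Omega)$. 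Since $\psi^{*}\Omega_{\univ}$ is again a nowhere-vanishing regular volume form, $\psi^{*}\Omega_{\univ}=c\,\Omega_{\univ}$ with $c\in\C^{*}$; the linear part of the induced automorphism of the integral affine structure, an element of $\GL_2(\Z)$, scales $\Omega_{\univ}$ by its determinant, so $c=\pm1$. Hence $\bar\psi$ scales $\tfrac{dx}{x}\wedge\tfrac{dy}{y}$ by $\pm1$, and as $\cst$ compactifies to the log Calabi--Yau pair $(\bP^2,\Delta)$ we conclude $\bar\psi\in\Bir(\bP^2,\pm\Omega)$, so $r$ is well defined. It remains to prove: \emph{every $\phi\in\Bir(\bP^2,\pm\Omega)$ that extends to an automorphism of $U_{\univ}$ lies in $\Bir_{e}(\bP^2,\pm\Omega)$.}

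This last statement is the main obstacle: bounding $\Aut(U_{\univ})$ from above, rather than merely mapping it into $\Bir(\bP^2,\pm\Omega)$. The key is that $\cV$ is an intrinsic, automorphism-invariant structure on $U_{\univ}$, rigid enough to exclude the torus translations in $\Aut(\cst)$ --- and, by Blanc's generation result \cite{Blanc}, every other element of $\Bir(\bP^2,\pm\Omega)\setminus\Bir_{e}(\bP^2,\pm\Omega)$. Concretely, any automorphism of $U_{\univ}$ permutes $\cV$ (equivalently, permutes the corresponding divisorial valuations on $\C(x,y)$), so $\bar\psi$ lies in the stabiliser of $\cV$ inside $\Bir(\bP^2,\pm\Omega)$; this stabiliser contains $\Bir_{e}(\bP^2,\pm\Omega)$, since $\cV$ is by construction the $\Bir_{e}(\bP^2,\pm\Omega)$-orbit of the initial blow-up data, and the crux of the whole theorem is the reverse inclusion. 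Here one must exploit the explicit location of the base points on $\Delta$ and the rigidity of the tree / exchange-graph combinatorics attached to $\cV$ to exclude every remaining element; the prototypical case is that a nontrivial torus translation moves the base points and so cannot preserve $\cV$. This combinatorial control is also what underlies the normalisation $c=\pm1$ above, and once it is in place $\rho$ and $r$ are mutually inverse, giving $\Aut(U_{\univ})\cong\Bir_{e}(\bP^2,\pm\Omega)$.
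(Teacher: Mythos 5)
Your overall architecture (a lifting map $\rho$, a restriction map $r$ to $\Bir(\bP^2,\pm\Omega)$, and the reduction of the theorem to showing $r$ lands in $\Bir_e(\bP^2,\pm\Omega)$) matches the paper's, and the easy inclusion via $\rho$ is essentially Proposition \ref{prop:extendphi}. The genuine gap is at the decisive step: you reduce the hard inclusion to the claim that the stabiliser of your curve configuration $\cV$ inside $\Bir(\bP^2,\pm\Omega)$ is exactly $\Bir_e(\bP^2,\pm\Omega)$, and then write that ``the crux of the whole theorem is the reverse inclusion'' and that ``one must exploit \ldots the rigidity of the tree / exchange-graph combinatorics'' to finish. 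That is not a proof; it restates the theorem. Several of the inputs you lean on are also not established anywhere when $U_\univ$ is constructed: that $\cO(U_\univ)^*=\C^*$, that $\cV$ is an automorphism-invariant structure (the exceptional curves meet $U_\univ$ in affine lines, and nothing a priori forces an automorphism to permute them), and that the scaling constant $c$ of the volume form equals the determinant of a ``linear part'' of an induced integral-affine automorphism.

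The paper closes this gap with two concrete ingredients you would need substitutes for. First, an automorphism of $U_\univ$ is reduced to a genuine biholomorphism $Y_\alpha\to Y_\beta$ of finite-level compactifications: one resolves the indeterminacies of the inverse (which lie on the boundary divisor), checks that the resolution consists only of corner blow-ups because all the auxiliary chains of $\bP^1$'s must be contracted, and thereby promotes the automorphism of $U_\univ$ to one of $Y_\univ$; your proposal never leaves the level of birational self-maps of $\cst$ and so never confronts this. Second, Lemma \ref{lemma:compatibility}: choosing $\alpha$ with $h^0(Y_\alpha,-K_{Y_\alpha})=1$ and interior blow-ups in two linearly independent directions, one shows the biholomorphism preserves $\Omega$ up to sign (via the normalisation of the period over the generator of $\ker(H_2(U)\to H_2(Y))$, not a determinant computation), and then invokes the factorisation of any change of toric model into elementary transformations (\cite[Proposition 3.7]{HK1}); the independence condition pins down the torus embedding and is precisely what excludes the $(\C^*)^2$-translations. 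Without an analogue of that factorisation theorem, your stabiliser claim has no engine behind it, so the proposal is incomplete exactly where the theorem's content lies.
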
 

The scheme $U_{\univ}$ is constructed by taking the infinite union of a system of open Calabi--Yau surfaces partially ordered by inclusion, each distinguished within their deformation class by the fact that the mixed Hodge structure on middle homology splits into two pure Hodge structures of different weights. $U_\univ$ can be thought of as the universal object for such Calabi-Yau surfaces. 

On the symplectic side, we have a directed system of Weinstein four-manifolds, again partially ordered by inclusion; the limit of this system is an infinite type Weinstein manifold $M_{\univ}$. 
This can be thought of as the universal object for Milnor fibres of cusp singularities. 

\begin{theorem}[Corollary \ref{cor:universal-exact-hms} and Theorem \ref{thm:mirror-map}]\label{thm:1} There is a quasi-equivalence of $A_\infty$ categories
\[ \perf U_{\univ} \simeq \cW(M_{\univ}) \] 
where $\cW(M_\univ)$ denotes the wrapped Fukaya category of $M_\univ$. 
Moreover, there is an injective group homomorphism  
\[
\Bir_{e}(\bP^2, \pm \Omega) \hookrightarrow  \Symp_e M_\univ  / ( \Ham_c M_\univ ).\\  
\]
Here $\Symp_e$ denotes exact symplectomorphisms, and $\Ham_c$ compactly supported Hamiltonian isotopies. 

The group homomorphism has the property that for any $\phi \in \Bir_e(\bP^2, \pm \Omega)$, its image $\phi^\vee $ has a well-defined action on $\cW(M_\univ)$, and  the following diagram commutes up to $A_\infty$ homotopy:
$$
\xymatrix{
\perf U_{\univ}   \ar[d]^-\simeq \ar[r]_-{\phi_\ast}  &  \perf U_{\univ}  \ar[d]^-\simeq\\
\cW(M_{\univ})   \ar[r]^-{\phi^\vee}  & \cW(M_{\univ})    \\
}
$$
\end{theorem}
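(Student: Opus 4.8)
The plan is to build everything from a compatible exhaustion of both sides by finite-type pieces, prove HMS at each finite stage with functoriality, and then take colimits. Concretely, I would set up a directed poset $I$ (the poset of "allowable" blow-ups / Milnor fibres of cusp singularities), and for each $i \in I$ a finite-type log CY surface $U_i$ with $U_\univ = \varinjlim U_i$, mirror to a Weinstein domain $M_i$ with $M_\univ = \bigcup M_i$. The first block of work is the equivalence $\Coh(U_\univ) \simeq \cW(M_\univ)$: I expect this to follow from the finite-type HMS equivalences $D^b\Coh(U_i) \simeq \cW(M_i)$ (which in turn come from the already-established log CY surface mirror symmetry of Gross–Hacking–Keel–Siebert and its symplectic counterparts, cited in the excerpt) by checking that the restriction functors on the $B$-side (pullback along $U_j \hookrightarrow U_i$ for $i \le j$, i.e. along open inclusions the "wrong way", or rather the corestriction coming from the tower structure) match the corestriction/stop-removal functors $\cW(M_i) \to \cW(M_j)$ on the $A$-side, and then invoking that $\cW$ of an increasing union is the colimit of the $\cW$'s of the pieces (a "Künneth/stop-removal" style continuity statement). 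The continuity of $\Coh$ under the relevant filtered colimit of schemes is formal. So Step 1 is: (a) finite-stage HMS with naturality in $I$; (b) continuity of both sides; (c) conclude.

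**Second**, for the group homomorphism $\Bir_e(\bP^2,\pm\Omega) \hookrightarrow \Symp_e M_\univ / \Ham_c M_\univ$, I would use the structure $\Bir_e = \langle \GL_2(\Z), E\rangle$ from Blanc's theorem as recalled in the excerpt. It suffices to realize the generators symplectically and check the relations hold modulo $\Ham_c$. The $\GL_2(\Z)$-action is the "easy" part: it is induced by the monomial/toric symmetries of $(\cst,\Omega)$ and acts on $M_\univ$ through its evident description as built from toric pieces, permuting the exhausting family $\{M_i\}$; one writes down the symplectomorphism directly on each $M_i$ compatibly. The cluster transformation $E$ is the substantive generator: it is a mutation, and mirror to a mutation one has the symplectic analogue — a Lagrangian-surgery / nodal-slide type symplectomorphism (in the spirit of the references \cite{Auroux, HK1, HK2, Pascaleff}) that is defined on $M_\univ$ precisely because passing to the infinite-type limit removes the truncation that obstructs the mutation at finite stages. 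Injectivity would be deduced from the faithfulness of the induced action on $\cW(M_\univ)$, which we get for free once the commuting square below is in hand and we know $\Aut(U_\univ) \simeq \Bir_e(\bP^2,\pm\Omega)$ acts faithfully on $\Coh(U_\univ)$ (true because an automorphism acting trivially on $\Coh$ of a variety with enough structure, e.g. via Bondal–Orlov-type reconstruction or simply because skyscrapers are detected, is trivial).

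**Third**, for the commuting square, the strategy is to check it stagewise. Given $\phi \in \Bir_e$, choose a cofinal subsystem on which $\phi$ is "adapted" — i.e. $\phi$ restricts to isomorphisms $U_i \xrightarrow{\sim} U_{\sigma(i)}$ for a cofinal family and $\phi^\vee$ correspondingly to symplectomorphisms $M_i \xrightarrow{\sim} M_{\sigma(i)}$ — and verify the square $\Coh(U_i) \to \cW(M_i)$, $\Coh(U_{\sigma(i)}) \to \cW(M_{\sigma(i)})$ commutes, using that the finite-stage HMS equivalence is canonical/natural enough to be $\phi$-equivariant at each stage (for the toric/$\GL_2(\Z)$ part this is the equivariance of the toric HMS equivalence; for $E$ it is the known compatibility of mutation on the two sides). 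Passing to the colimit over this cofinal system gives the square for $\phi$ on $U_\univ$, $M_\univ$, and well-definedness of the $\cW(M_\univ)$-action of $\phi^\vee$ follows since the colimit of functors is a functor.

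**The main obstacle** I anticipate is Step 1(a)–(b) done \emph{with naturality}: it is not enough to have an abstract equivalence $D^b\Coh(U_i) \simeq \cW(M_i)$ at each stage; I need these to be compatible with the transition maps on both sides \emph{strictly enough} to pass to the colimit, and I need the right continuity statement for the wrapped category of an infinite-type Weinstein manifold exhausted by domains (identifying $\cW(M_\univ)$ with $\varinjlim \cW(M_i)$ along corestriction functors, which requires care about which direction the stops move and whether one is removing or adding stops). Getting the cluster generator $E$ to act globally on $M_\univ$ — i.e. producing the symplectomorphism whose existence is precisely what the infinite-type limit buys us, and checking it is exact and satisfies Blanc's relations modulo $\Ham_c$ — is the other genuinely hard input; everything involving $\GL_2(\Z)$ and the purely algebraic continuity of $\Coh$ I expect to be routine by comparison.
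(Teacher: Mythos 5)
Your overall architecture coincides with the paper's: a directed system of explicit toric models indexing finite-type pairs $(U_\xi, M_\xi)$, finite-stage HMS equivalences checked to be compatible with the transition functors, passage to the limit, a generators-and-relations construction of the group map using Blanc's presentation $\Bir_e = \langle \GL_2(\Z), E\rangle$ with the mirror to $E$ built from nodal slides and cut transfers, and a stagewise verification of the commuting square. Injectivity via faithfulness of the action on $U_\univ$ (detected on skyscrapers) is also how the paper concludes. However, there are two places where your plan, as written, would not go through.

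First, the direction of the limits. For $\xi' \geq \xi$ one has inclusions $U_\xi \hookrightarrow U_{\xi'}$ and $M_\xi \hookrightarrow M_{\xi'}$, and the functors that are actually compatible with the finite-stage HMS equivalences are the \emph{contravariant} ones: pullback $i^*: \Coh(U_{\xi'}) \to \Coh(U_\xi)$ mirror to Viterbo restriction $\iota^*: \cW(M_{\xi'}) \to \cW(M_\xi)$. Accordingly the paper \emph{defines} $\cW(M_\univ) := \varprojlim_\xi \cW(M_\xi)$ and identifies $\varprojlim_\xi \Coh(U_\xi)$ with $\Perf(U_\univ)$ via Gaitsgory--Rozenblyum; both sides are inverse limits. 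The covariant ``corestriction/stop-removal'' functors $\cW(M_i) \to \cW(M_j)$ you invoke are not available here: these are Weinstein subdomain inclusions, not sectorial inclusions, and the matching covariant functor on the $B$-side would be pushforward along an open immersion, which does not preserve coherence or perfection. You correctly flagged this as the main obstacle, but the resolution is to reverse all the arrows, not to find the missing continuity statement for a colimit.

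Second, you describe the $\GL_2(\Z)$ part of the commuting square as the routine ``equivariance of the toric HMS equivalence.'' In the paper this is the most delicate step of Proposition \ref{prop:mirror-symplecto-in-system}: the finite-stage equivalence is pinned down by an explicit exceptional collection, and $A^\vee_\xi$ a priori agrees with $A_*$ only up to spherical twists in $(-2)$-components of $D_\xi$ and tensoring by a line bundle. Showing these corrections die on $\Coh(U_\xi)$ requires identifying the mirrors of $(-2)$-curves with visible Lagrangian spheres and their Lagrangian translates, together with a $K$-theory computation. Likewise ``check the relations modulo $\Ham_c$'' conceals the one substantive relation, the $A_2$ cluster relation $P^5 = \Id$, whose symplectic verification is nontrivial and is imported from earlier work. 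These are not wrong ideas on your part, but they are the genuine content of the proof and cannot be dismissed as formal.
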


\subsection{Mirrors to automorphisms of open log Calabi-Yau surfaces.} As a corollary to Theorem \ref{thm:1} and its proof, we obtain the following result, which extends \cite[Theorem 6.1]{HK2} (which applies to automorphisms of closed Calabi-Yau surfaces which fix the boundary divisor pointwise). 

\begin{corollary}[Corollary \ref{cor:mirror-symplecto-fixed}] Let $(Y,D)$ be a log Calabi-Yau surface with maximally degenerate boundary and distinguished complex structure, let $U= Y \setminus D$, and let $(M, \omega)$ be the Weinstein manifold mirror to $U$ constructed in \cite{HK1}. Further assume that $H_1(U; \Z)=0$.
Then there exists an injective map  
\[
\Aut U \hookrightarrow \pi_0\Symp^\gr (M) 
\]
where $\Symp^\gr$ denotes graded symplectomorphisms.  Moreover, the mirror symmetry correspondence $\coh U \simeq \cW(M)$ intertwines the induced actions of $\Aut(U)$ on the two triangulated categories. \end{corollary}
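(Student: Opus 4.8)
The plan is to bootstrap the statement from Theorem~\ref{thm:1} by restricting the universal picture to the finite-type pieces $U \subset U_\univ$ and $M \subset M_\univ$. The first thing I would record is that, because $(Y,D)$ has maximally degenerate boundary and carries the distinguished complex structure, its toric model exhibits $U = Y \setminus D$ as one of the open Calabi--Yau surfaces in the system whose union is $U_\univ$; thus $U$ is an open subscheme of $U_\univ$, and correspondingly its Weinstein mirror $M$ of \cite{HK1} is (deformation equivalent to) a Weinstein subdomain of $M_\univ$, with the two inclusions matched under the dictionary between open subschemes of $U_\univ$ and Weinstein subdomains of $M_\univ$ built into the construction. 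Under this dictionary $\Coh(U)$ is the Verdier localisation of $\Coh(U_\univ)$ at the subcategory of objects supported off $U$, while $\cW(M)$ is the matching Viterbo-restriction localisation of $\cW(M_\univ)$, in a way compatible with the equivalence $\Coh(U_\univ) \simeq \cW(M_\univ)$.

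Next I would produce the embedding $\Aut(U) \hookrightarrow \Aut(U_\univ)$. Any $\phi \in \Aut(U)$ extends, across the toric model and the birational identification of $(Y,D)$ with $(\bP^2, \Delta)$, to an element of $\Bir(\bP^2, \pm\Omega)$; since $\phi$ is a biregular automorphism and $U$ carries the distinguished complex structure, the hypothesis $H_1(U;\Z)=0$ — which in particular rules out toric $U$ and hence any continuous automorphisms — forces this element into the discrete subgroup $\Bir_e(\bP^2, \pm\Omega) = \Aut(U_\univ)$, tautologically stabilising the subscheme $U$ (in fact identifying $\Aut(U)$ with the stabiliser of $U$ in $\Bir_e$). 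Applying Theorem~\ref{thm:1} to such $\phi$ gives its mirror $\phi^\vee \in \Symp_e(M_\univ)/\Ham_c(M_\univ)$ together with the commuting square over $\cW(M_\univ)$.

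I would then restrict to $M$. Because $\phi$ stabilises $U \subset U_\univ$, and the group homomorphism of Theorem~\ref{thm:1} is assembled from the directed system so that an element preserving a finite subdomain is represented by a symplectomorphism preserving it, one may choose a representative of $\phi^\vee$ preserving the Weinstein subdomain $M$; restriction then yields a well-defined class in $\pi_0 \Symp_e(M)$. Since $H_1(U;\Z) = 0$ gives $H^1(M;\Z) = 0$ and $M$ is Calabi--Yau, this class admits a graded lift, unique once we normalise the grading so that the induced autoequivalence of $\cW(M)$ has degree $0$; this produces the homomorphism $\Aut(U) \to \pi_0 \Symp^\gr (M)$. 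The intertwining with $\Coh(U) \simeq \cW(M)$ comes by restricting the commuting square of Theorem~\ref{thm:1}: $\phi_\ast$ on $\Coh(U_\univ)$ and $\phi^\vee$ on $\cW(M_\univ)$ preserve the respective localising subcategories, so they descend compatibly to the localisations identified above. Injectivity is then immediate: if $\phi$ lands in the identity component, its action on $\cW(M)$, hence $\phi_\ast$ on $\Coh(U)$, is naturally isomorphic to the identity, so $\cO_x \cong \phi_\ast \cO_x \cong \cO_{\phi(x)}$ for every closed point $x$, whence $\phi$ fixes all closed points and $\phi = \id$.

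The main obstacle is the compatibility invoked in the third paragraph: that the universal homomorphism of Theorem~\ref{thm:1} genuinely restricts to the pair $M \subset M_\univ$ — i.e.\ that $\phi^\vee$ has a representative preserving $M$ — and that this restriction is intertwined with $\phi_\ast$ on $\Coh(U)$ through the Verdier/Viterbo localisations. Making this precise means tracking the functoriality of the construction of $M_\univ$ and of the mirror equivalence along the poset of subdomains; given Theorem~\ref{thm:1} and its proof this should be essentially formal, but it is where the content of the corollary lies. The other place the hypotheses are essential is the identification $\Aut(U) \cong \mathrm{Stab}_{\Bir_e}(U)$, where $H_1(U;\Z)=0$ both pushes automorphisms into the discrete group $\Bir_e$ and makes the grading on the mirror symplectomorphism unique.
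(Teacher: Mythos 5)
Your overall picture is right, but the route is inverted relative to the paper's, and the step you yourself flag as ``the main obstacle'' is precisely the content of the proof, left unproved. The paper does not descend from $M_\univ$ to $M$ at all: it works bottom-up at the finite level. The key input is Remark \ref{rem:maps-on-interiors}: any $\phi \in \Aut(U_\xi)$ extends to a biholomorphism $Y_{\eta_1} \to Y_{\phi_*\eta_1}$ where $\eta_1 > \xi$ is obtained by \emph{corner} blow-ups only, so that $U_{\eta_1} = U_{\phi_*\eta_1} = U$ and hence $M_{\eta_1} = M_{\phi_*\eta_1} = M$. One then factorises $\phi$ into the preferred generators so that all intermediate maps are regular on $Y_{\eta_1}$, and Proposition \ref{prop:mirror-symplecto-in-system} directly produces a symplectomorphism $\phi^\vee: M \to M$ together with the commuting square against $\phi_*$ on $\Coh(U)$. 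In other words, the finite-level representative is \emph{already} an automorphism of $M$ by construction; there is no need to build $\phi^\vee$ on $M_\univ$ first and then argue that some representative preserves the subdomain $M$ and descends through matching Verdier/Viterbo localisations. Your deferral of that compatibility (``should be essentially formal'') is where the argument is incomplete, and the clean way to close it is not to chase functoriality of localisations but to invoke the corner-blow-up resolution, which is what makes $M$ itself the domain and codomain of the finite-level mirror map. Relatedly, you do not address well-definedness of the restricted class: the paper checks independence of the choice of $\eta_1$ and of the factorisation by choosing, as in Corollary \ref{cor:map-to-symp-Muniv}, a dominating $\theta$ which can moreover be taken with $U_\theta = U_\xi$, so the comparison happens inside $\pi_0\Symp^\gr(M)$ and not merely in the limit.

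Two smaller points. First, the mechanism by which $H_1(U;\Z)=0$ forces $\phi$ into $\Bir_e(\bP^2,\pm\Omega)$ is not ``no continuous automorphisms'': it is the rigidity statement of Lemma \ref{lemma:compatibility} and Remark \ref{rem:droppingcondition} (uniqueness of the anticanonical divisor, $\phi^*\Omega_2 = \pm\Omega_1$, and the factorisation of changes of toric model from \cite{HK1}), which requires interior blow-ups on components in two linearly independent directions; that is the hypothesis actually used in the body of the paper, of which $H_1(U;\Z)=0$ is the reformulation. Second, your injectivity argument via skyscraper sheaves is fine and in fact supplies a detail the paper leaves implicit, granting (as the paper does via \cite{Keating-Smith}) that graded-isotopic symplectomorphisms act identically on $\cW(M)$.
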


\begin{application}[Proposition \ref{prop:cubic-incl}]Let $U_C$ be the cubic surface with split mixed Hodge structure, constructed by blowing up each of three preferred points on the toric boundary divisor of $\bP^2$ twice, and let $M_C$ denote its mirror. There is an inclusion 
\[ \Z/2\Z \star \Z/2 \Z \star \Z/2 \Z \hookrightarrow \pi_0 \Symp^\gr (M_C). \]
\end{application}

\subsection{Mirrors to general elements of $\Bir(\bP^2, \pm \Omega)$} 
Why restrict ourselves to $\Bir_{e}(\bP^2, \pm \Omega)$? Recall that the full group $\Bir(\bP^2, \pm \Omega)$ is generated by $\Bir_{e}(\bP^2, \pm \Omega)$ together with $(\bC^\ast)^2$. 
Our primary interest is automorphisms mirror to symplectomorphisms (i.e.~autoequivalences which are `geometric'). The $(\bC^\ast)^2$ belongs to a different regime. Its action on the Fukaya category is not geometric in the most basic example: in the case of $\bP^2 \backslash \Delta$, $(\bC^\ast)^2$ has a mirror action on the Fukaya category by reparametrisations of  local systems on Lagrangians (see Remark \ref{rem:non-symplectos}).

Some elements of  $\Bir(\bP^2, \pm \Omega)$  fail to be regular on any open CY surface with split mixed Hodge structure: for instance, the map \[(x,y) \mapsto (x, (x+1)\inv(x+\lambda)\inv y), \, \lambda \neq 1,  \] which is indeterminate at $[-1:0:1]$ and $[-\lambda: 0 :1]$. 
To resolve such maps, one needs to allow surfaces with non-split mixed Hodge structures. There is a well developed expectation that such deformations of the complex structure should be mirror to non-exact deformations of the symplectic form, together with the introduction of a $B$-field; as above with the toy case of $(\bC^\ast)^2$, the mirror automorphisms of deformed Fukaya categories 
are not in general expected to be geometric.

\subsection*{Acknowledgements}
We thank Philip Engel, Sheel Ganatra, Fran\c{c}ois Greer, Paul Hacking, Andrew Hanlon, Nick Rozenbluym, and Jan Steinebrunner for helpful conversations, and J{\'a}nos Koll{\'a}r for correspondence. We also thank the anonymous referee for their careful reading of this paper and insightful comments and suggestions.

AK was partially supported by EPSRC Fellowship EP/W001780/1 and ERC Starting Grant `SingSymp' (grant number ~101041249). AW was supported by NSF Grant DMS-2002183 and  UKRI Frontier Research Grant `Floer Theory Beyond Floer’ (grant number EP/X030660/1). 

\textbf{Open Access.} For the purpose of open access, the authors have applied a Creative Commons Attribution (CC:BY) licence to any Author Accepted Manuscript version arising from this submission.
\textbf{UKRI data access statement.} 
There is no dataset associated with this paper.

\section{Universal objects} \label{sec:univ-obj-exact}

\subsection{Systems of log Calabi-Yau surfaces with split mixed Hodge structure}\label{sec:bsidesystems}
Unless otherwise specified, all log Calabi-Yau surfaces $(Y,D)$ throughout will be assumed to have maximally degenerate boundary. In this dimension, this means that $D$ has at least one node. We assume moreover that $(Y,D)$ corresponds to the unique point within its complex deformation class such that the mixed Hodge structure on $H_2 (Y \backslash D, \bZ)$ splits. In the language of \cite{HK1}, $(Y,D)$ has the `distinguished' complex structure within its deformation class. See 
 \cite[Section 2.2]{HK1} for background  exposition.

Fix such a log Calabi-Yau surface $(Y,D)$. By \cite[Proposition 1.3]{GHK2}, this has a toric model: possibly after some corner blow ups (i.e.~blow ups of nodal points of $D$), $(Y,D)$ is given by starting with a toric surface, say $(\oY,\oD)$, and iteratively blowing up interior (i.e.~non-nodal) points on the irreducible components $\oD_i$ of $\oD$. The split mixed Hodge structure condition essentially determines which point gets blown up for each component. Explicitly, for a fixed embedding of the torus $(\bC^\ast)^2$ into $\bar{U} = \bar{Y} \backslash \bar{D}$, take all blow-ups to be at the ``$-1$ points'' of the $\bar{D}_i$: if $\bar{D}_i$ corresponds to the toric ray $(0,1)^T$, we blow up at the limit of $(-1, z)$ as $z \to 0$; and similarly for other rays after conjugating by $\SL_2(\bZ)$ elements.

In this setting, a log Calabi-Yau surface $(Y,D)$ is uniquely determined by two sequences of numbers: the self-intersections $n_i$ of the $\oD_i$, which determine the toric surface, and $m_i$, the number of interior blow-ups performed on the boundary component $\oD_i$. In order to define a universal object for this family, it will be useful for us to, instead of just specifying the $n_i$,  fix an embedding of the toric fan in a reference copy of $\bR^2$. This determines an explicit embedding of $(\C^*)^2$ into the interior $U = Y \backslash D$ and lets us index the irreducible components of the boundary $D$ by primitive elements $\mathbf{n} =(n_1, n_2)$ in $\Z^2$; if we write the embedding as $\iota: (\C^*)^2 \into U$, we have 
\begin{equation} \label{eq:Dn} D_{\nn} = \{ x \in Y \mid x = \lim_{t \to 0} \, \iota (\lambda_1 t^{n_1}, \lambda_2 t^{n_2}) \text{ for some } (\lambda_1, \lambda_2) \in (\C^*)^2 \}. \end{equation} 
The map $ \lambda \mapsto \lim_{t \to 0} \iota( \lambda^{-n_2} t^{n_1}, \lambda^{-n_1} t^{n_2}) $
defines a canonical isomorphism $\C^* \cong D_{\nn}^{\mathrm{int}}$.

\begin{definition}
A log Calabi--Yau surface $(Y,D)$ \emph{with an explicit toric model} is given by the data of a smooth toric fan in 
$\bZ^2$, together with a non-negative integer $m_{\mathbf{n}}$ for each ray of the fan, which is the number of blow ups at the distinguished point of $\oD_\mathbf{n}$.
We record the integers by defining a subset $S$ of $\zm$: 
\begin{equation}\label{eq:Sdefn} S= \bigcup_{\{\nn \in \zm \mid m_\nn \neq 0\}} \{ j \nn\}_{j=1}^{m_{\nn}}. \end{equation}
\end{definition}


\begin{definition}
Let $\CY$ be the set of log Calabi-Yau surfaces with explicit toric models. Given $\xi \in \CY$, let $(Y_\xi, D_\xi)$ denote the corresponding log CY surface with explicit toric model, and set $U_\xi = Y_\xi \backslash D_\xi$.  We let $S_{\xi} \subset \zm$ denote the set indexing the interior blow-ups in the explicit toric model for $Y_{\xi}$.  Note that if $\xi$ and $\xi'$ index surfaces which can be related by a sequence of corner blow ups, both $U_\xi=U_{\xi'}$ and $S_{\xi}=S_{\xi'}$. 

We endow $\CY$ with a partial ordering according to the blow-up relation, so if $\xi, \eta \in  \CY$ satisfy $\eta \geq \xi$, we have a map 
 \[ p_{\eta, \xi}: (Y_\eta, D_\eta) \to (Y_\xi, D_\xi)\]
 which can be factorized as a sequence of corner and interior blow-downs, as well as a natural inclusion map 
 \[ i_{\xi, \eta}: U_{\xi} \into U_{\eta}. \]

\end{definition}

\begin{definition}
We define $\Yuniv$, the universal distinguished log Calabi-Yau surface, by taking the inverse limit under blow ups:
$$\Yuniv = \varprojlim_{\xi \in \CY} Y_\xi.$$
This is not a scheme, but it is an object in the pro-completion of the category of schemes (see, e.g., \cite{Lurie} for the definition of the pro-completion of a category which admits finite limits).
We define the universal distinguished open Calabi-Yau surface, $\Uuniv$, by taking the direct limit under inclusions:
$$\Uuniv= \varinjlim_{\xi \in \CY} U_\xi.$$
This is a scheme which is locally of finite type.
\end{definition}

\subsection{Perfect and coherent sheaves} 
Recall that if a scheme $X$ is quasi-projective, the bounded derived category of coherent sheaves $D(X)$, and the derived category of perfect complexes $\Perf X$, each have unique dg enhancements up to quasi-isomorphism  \cite[Theorems 2.13, 2.14]{Lunts-Orlov}.
(Of course, if $X$ is smooth and quasi-compact, we have $\Perf X =  D(X)$.) Also, if $X$ is projective and e.g.~smooth, the dg enhancement of $D(X)$ is in fact strongly unique, see \cite[Definition 2.3 and Theorem 2.14]{Lunts-Orlov}.

More generally, for an arbitrary scheme $X$, one can define its dg categories of quasi-coherent sheaves, and its dg category of perfect sheaves \cite{CaSt}.
(For an $\infty$-categorical treatment see also ~\cite{GR1}, specifically \cite[Chap.~3, Section 3.6]{GR1} for perfect sheaves.) 

\subsubsection{A dg model for $\Perf U_\univ$} The goal of this subsection is to construct an explicit dg enhancement of $\Perf U_\univ$, which we will denote  $\perf U_\univ$.

Let $U$ denote either $U_{\xi}$ for some $\xi \in \CY$ or the infinite-type scheme $U_{\un}$, and let $C(\mathrm{Q} \coh U)$ be the category of complexes of quasi-coherent sheaves on $U$. Write $C^{\mathrm{fib}}(\mathrm{Q} \coh U)$ for the full dg subcategory consisting of fibrant objects, i.e., h-injective complexes $\cK$ where each term $\cK^n$ is injective, and let $\perf_{\dg}(U)$ denote the full subcategory of $C^{\mathrm{fib}} (\mathrm{Q} \coh U)$ generated by the perfect objects. (See \cite[Section 3.1]{LuSc} for background.)

Recall that for any two $\cJ, \cK \in \perf_{\dg}(U)$, the h-injectivity of the complexes implies that the complex $\Hom(\cJ, \cK)$ computes the homs in the derived category. In general one has: 

\begin{proposition} For any $U$ under consideration, the dg categories $\perf_{\dg}( U)$ are dg enhancements of the derived category $\Perf U$. \end{proposition}
\begin{proof} This is stated in \cite[Section 3.1]{LuSc} for quasi-projective $U$, and for $U_{\univ}$ the same conclusion follows from Gabber's result that $\mathrm{Q}\coh X$ is a Grothendieck category  \cite[Proposition \href{https://stacks.math.columbia.edu/tag/077P}{077P}]{Stacks} for any scheme $X$, together with the existence of functorial fibrant resolutions \cite[Theorem \href{https://stacks.math.columbia.edu/tag/079P}{079P}]{Stacks}, \cite{Serpe}. \end{proof}

\begin{definition} Define $\coh U_{\xi} := \perf_{\dg}(U_\xi)$ and $\perf U := \perf_{\dg} (U).$ 
\end{definition} 

Now let $\xi, \eta \in \CY$ be such that $\eta \geq \xi$ and let $i=i_{\xi,\eta}: U_{\xi} \into U_{\eta}$ be the inclusion. Since fibrant/perfect  complexes remain fibrant/perfect under restrictions to open subsets, there is a natural pullback map 
\[ i^*: \coh U_\eta \to \coh U_{\xi}\] 
computing the pullback on the derived categories. For any chain $\alpha < \beta < \gamma$ in $\CY$ it holds that 
$i_{\beta,\gamma}^* \circ i_{\alpha,\beta}^* = i_{\alpha,\gamma}^*$
 so these maps define a strictly commuting diagram of dg categories.

\begin{lemma} \label{lem:Usheaves} There is a quasi-isomorphism
\begin{equation*}  \perf U_\un \cong \varprojlim_{\xi \in \CY} \coh U_\xi. \end{equation*} \end{lemma}
\begin{proof} The above limit is the strict limit of the diagram $\{\coh U_{\xi} \}_{\xi \in U_{\xi}}$:  objects are elements of 
\[  \left\{ \left(\cJ_{\xi}\right) \in \prod_{\xi \in \CY} \coh U_{\xi} \,\, \bigg|\,  \cI_{\eta}|_{U_\xi} = \cI_{\xi} \text{ for all } \eta > \xi \right\} \] 
and morphisms are given by 
 \[ \Hom((\cJ_{\xi}), (\cK_{\xi}))= \left\{ (a_{\xi}) \in \prod_{\xi \in \CY} \Hom_{\coh U_{\xi}}(\cJ_{\xi}, \cK_{\xi})  \, \bigg|\ a_{\eta}|_{U_{\xi}} = a_{\xi}  \text{ for all } \eta > \xi \right\}. \] 
The restriction functors $i_{\xi}^*: \perf U \mapsto \coh U_{\xi}$ induce a natural functor 
\begin{equation} \label{eq:perfu} i^*:  \perf U_\un \to \varprojlim_{\xi \in \CY} \coh U_\xi. \end{equation}
This is easily seen to be fully faithful: the fact that an element $(a_\xi) \in \Hom((\cJ_{\xi}), (\cK_{\xi}))$ glues to a global section of $\mathcal{H}\kern -.5pt om(\cJ, \cK)$ follows from the fact that any two $U_{\xi_1}, U_{\xi_2}$ embed into a common $U_{\eta}$ on which we can check compatibilities. 

Moreover, any object in the limit provides the sheaf data required to specify some complex $\cK$ on $U_{\univ}$. The complex $\cK$ is perfect since perfectness can be checked locally on quasi-compact opens, and thus quasi-isomorphic to some fibrant object $\cK'$. The essential surjectivity of $i^*$ follows.
 \end{proof}



\subsubsection{Coherent sheaves on $\Yuniv$}

Next, we want to define a (triangulated) category of coherent sheaves on $\Yuniv$, which will be the homotopy category of a dg enhancement $\coh Y_\univ$.
To do so, we switch to the \v{Cech} model appearing in \cite[Section 2.4]{kuznetsov} to define our  dg enhancements $\coh Y_{\xi}$. This model has the advantage that it behaves nicely under pullbacks.
 
 The construction can be summarised as follows. For a single $Y$ in our system, we choose a finite affine  cover $\cV=\{V_i\}_{i \in I}$ of $Y$, and for any $ \mathbf{i}=(i_1, \ldots, i_p) \in I^p$, we set 
$V_{\mathbf{i}} = V_{i_1} \cap \cdots \cap V_{i_p}.$ 
To this cover we can associate the \v{C}ech cosimplicial category $\mathscr{C}_{Y, \cV}$: the objects of $\mathscr{C}_{Y, \cV}$ are finite complexes of vector bundles on $Y$, and for any two complexes $\cF$ and $\cG$, there is an associated cosimplicial complex of vector bundles $\Hom_{\mathscr{C}_{Y, \cV}}^\bullet(\cF,\cG)$ defined by 
\[ \Hom_{\mathscr{C}_{Y, \cV}}^p(\cF,\cG)= \bigoplus_{\mathbf{i} \in I^{p+1}} \Gamma(V_{\mathbf{i}}, \cF^\vee \otimes \cG). \]
Recall that given a cosimplicial object $A^\bullet$ in an abelian category $\mathscr{A}$, there is an associated `normalised' object $N(A^\bullet) \in \mathrm{Ch}_{\geq 0}(\mathcal{A})$ (the relevant background appears in \cite[Section 2.3]{kuznetsov}). In our case, we obtain a bigraded complex $N^{*, *} (\Hom_{\mathscr{C}_{Y, \cV}}(\cF,\cG))$ of vector spaces, where the bigrading is by (internal degree of $\cF^\vee \otimes \cG$, \v{C}ech cosimplicial degree).  We denote the totalisation of this complex by $\check{C}_N^*(\cV, \cF^\vee \otimes \cG)$. (See Remark \ref{rem:norm} for a justification of this notation.)

We now consider the normalised category $N \mathscr{C}_{Y, \cV}$ and its homotopy category $[N \mathscr{C}_{Y, \cV}]$. The former is a dg category with the same objects as $\mathscr{C}_{Y, \cV}$ and with Hom spaces defined by
\begin{equation} \label{eqn:nhoms} \Hom_{N \mathscr{C}_{Y, \cV}}^*(\cF,\cG):= \check{C}_N^*(\cV, \cF^\vee \otimes \cG).  \end {equation} 
The above complex computes the hypercohomology of $\mathcal{H}\kern -.5pt om(\cF,\cG)= \cF^\vee \otimes \cG$, and it follows that $N \mathscr{C}_{Y, \cV}$ is a dg enhancement of $D(Y)$. 

Given one of the divisors $D_{\xi}$ equipped with a \v{C}ech cover $\cV_{\xi}$, we can define a dg category $N \mathscr{C}_{D_{\xi}, \cV_{\xi}}$ in exactly the same way; since $D_{\xi}$ is singular, this will define a dg enhancement of  $\Perf D_{\xi}$. 

\begin{remark}[Comparisons of \v{C}ech complexes]\label{rem:norm} Given any cosimplicial complex of vector spaces $F^\bullet$, we can also consider the `un-normalized' or `simple' bigraded complex $s(F)$. There is a functorial inclusion $N (F) \to s(F)$  which restricts to a quasi-isomorphism
$N^{j,*} F  \to s^{j,*}F $
for all $j$ (this part of the Dold-Kan correspondence, as stated e.g.~in \cite[Theorem 2.12]{Conrad}), and thus a functorial inclusion $\mathrm{Tot}\, N(F) \to \mathrm{Tot} \,s(F)$ which is also a quasi-isomorphism. 
In our case, $s(\Hom_{\mathscr{C}_{Y, \cV}} (F,G))$ is the standard bigraded \v{C}ech complex computing the coherent cohomology of $\mathcal{H}\kern -.5pt om(\cF,\cG) $ according to the cover $\cV$ \cite[Section  \href{https://stacks.math.columbia.edu/tag/01FP}{01FP}]{Stacks}. Denote the totalisation of this complex by $\check{C}_{\mathrm{unord}}^*(\cV, \cF^\vee \otimes \cG)$. We have a sequence 
\begin{equation} \check{C}_N^*(\cV, \cF^\vee \otimes \cG)\to \check{C}_{\mathrm{unord}}^*(\cV, \cF^\vee \otimes \cG) \to R \Gamma(Y, \cF^\vee \otimes \cG)\end{equation}
where the latter map is the canonical quasi-isomorphism defined e.g.~in \cite[Lemma \href{https://stacks.math.columbia.edu/tag/08BN}{08BN}] {Stacks}. 
The enhancement map $H^n (\Hom_{N \mathscr{C}_{Y, \cV}}) \to \Hom_{D(Y)} (\cF,\cG[n])$ factors as the composition of the induced maps on homology (in fact, one could define the enhancement map this way). 
\end{remark} 

\subsubsection*{Refinements} Now suppose $\cW=\{W_j\}_{j \in J}$ is another finite affine open cover of $Y$ which refines the cover $\cV$, that is, there is a map $c: J \to I$ of the indexing sets so that 
$W_j \subset V_{c(j)} $
for all $j \in J$. 
There is a natural (strict) refinement functor $r: \mathscr{C}_{Y, \cV} \to \mathscr{C}_{Y, \cW}$. This is the identity on objects and is defined on Hom spaces by the standard \v{C}ech refinement maps $\Hom_{\mathscr{C}_{Y, \cV}}^\bullet(\cF,\cG)\to \Hom_{\mathscr{C}_{Y, \cW}}^\bullet(\cF,\cG)$, so $ r(\alpha)_{j_0, \cdots, j_{p+1}}= \mathrm{res}_{W_{j_0 \cdots j_n}}(\alpha_{c(j_0), \cdots, c(j_{p+1})}) $
for any $\alpha \in \Hom_{\mathscr{C}_{Y, \cV}}^p(\cF,\cG)$. See also the discussion appearing in \cite[Section \href{https://stacks.math.columbia.edu/tag/01FP}{01FP}]{Stacks}
of the induced refinement maps between the un-normalised complexes.

\subsubsection*{Pullbacks} Let $\eta \geq \xi$ in $\CY$, and let $p:Y_{\eta} \to Y_{\xi}$ be the blowdown morphism. Suppose we have chosen some finite affine open cover $\cV=\{V_i\}_{i \in I}$ of $Y_{\xi}$. There is a natural (strict) pullback functor 
$\mathscr{C}_{Y_{\xi}, \cV} \to  \mathscr{C}_{Y_{\eta}, p\inv\cV}$  
defined on objects by $\cF \mapsto p^*\cF$ and on morphisms by
$ (\alpha_{j}) \mapsto (p^*\alpha_{j}).$ 
The open cover $p\inv \cV$ will in general \emph{not} be affine as some open sets $p\inv(V_i)$ will contain the exceptional divisors contracted by $p$, but we can always further refine it to a finite affine cover $\cW$. After composing with the refinement operation defined above we obtain a functor $(p_{\eta, \xi})^*: \mathscr{C}_{Y_{\xi},\cV} \to \mathscr{C}_{Y_{\eta},\cW}$. (The notation is justified by Proposition \ref{prop:pullback} below.)

\subsubsection*{Pullback functors on dg categories} The normalisation procedure is functorial, so any strict functor $\mathscr{C} \to \mathscr{C}'$ of cosimplicial categories induces a dg functor $N \mathscr{C} \to N \mathscr{C}'$. 

\begin{proposition} \label{prop:pullback} Let $\eta,\xi \in \CY$ be such that $\eta \geq \xi$ and let $p: Y_{\eta} \to Y_{\xi}$ be the blowdown map. Further suppose that $\cV$ is a choice of \v{C}ech cover for $Y_{\xi}$ and let $\cW$ be any \v{C}ech cover of $Y_{\eta}$ refining the cover $p \inv \cV$. Let $p^*: N \mathscr{C}_{Y_\xi, \cV} \to N\mathscr{C}_{Y_\eta, \cW}$ be the functor on dg categories induced by the pullback map defined above. Then the map $p^*: [N \mathscr{C}_{Y_\xi, \cV}] \to [N \mathscr{C}_{Y_\eta, \cW}]$ on homotopy categories 
agrees with the pullback map $p^*: D (Y_{\xi}) \to D ( Y_{\eta})$ under our equivalence. 
 \end{proposition}
  \begin{proof} By \cite[Lemma \href{https://stacks.math.columbia.edu/tag/01FD}{01FD}]{Stacks} and \cite[Lemma \href{https://stacks.math.columbia.edu/tag/08BN}{08BN}]{Stacks} combined with  Remark \ref{rem:norm}, we have a commutative diagram
 \[ \begin{tikzcd}
 \check{C}_N^*(\cV, \cF^\vee \otimes \cG) \arrow[d, "p^*"]  \arrow[r, hook]  & \check{C}_{\mathrm{unord}}^*(\cV, \cF^\vee \otimes \cG) ) \arrow[d, "p^*"] \arrow[r]   &  R \Gamma(Y, \cF^\vee \otimes \cG)\  \arrow[d, "p^*"]   \\
 \check{C}_N^*(\cW, p^*(\cF^\vee \otimes \cG))  \arrow[r, hook]                         &  \check{C}_{\mathrm{unord}}^*(\cW, p^*(\cF^\vee \otimes \cG)) \arrow[r] & R \Gamma(Y, p^*(\cF^\vee \otimes \cG))
\end{tikzcd}
\]
 for any two finite complexes $\cF$ and $\cG$ of vector bundles on $Y_{\xi}$. Here the leftmost vertical arrow is from the induced map on normalised complexes, the middle arrow is the induced map on simple complexes, and the rightmost vertical map is the standard pullback map.  The result follows after passing to cohomology. \end{proof}
  
\begin{lemma}\label{lem:cechcovers} There exists a set of choices of finite affine \v{C}ech covers $\{\cV_{\xi}\}_{\xi \in \CY}$ which are \emph{compatible} in the sense that for all $\eta > \xi$, the cover $\cV_{\eta}$ refines $p_{\eta, \xi}\inv \cV_{\xi}$, and moreover, the pullback functors $(p_{\eta, \xi})^*: N \mathscr{C}_{Y_{\xi}, \cV_{\xi}} \to N \mathscr{C}_{Y_{\eta}, \cV_{\eta}}$
 define a strictly commuting diagram of dg categories indexed by $\CY$. 
 
Furthermore, given any two such choices of compatible covers $\{\cV_{\xi}\}_{\xi \in \CY}$ and $\{\cV_{\xi}'\}_{\xi \in \CY}$, there exists a third set of compatible covers $\{\cW_{\xi}\}_{\xi \in \CY}$ simultaneously refining both and inducing maps of diagrams 
\[ \{N \mathscr{C}_{Y_{\xi}, \cV_{\xi}}\}_{\xi \in \CY} \rightarrow \{N \mathscr{C}_{Y_{\xi}, \cW_{\xi}}\}_{\xi \in \CY} \leftarrow \{N \mathscr{C}_{Y_{\xi}, \cV_{\xi}'} \}_{\xi \in \CY} \]
where the maps $N \mathscr{C}_{Y_{\xi}, \cV_{\xi}} \rightarrow N \mathscr{C}_{Y_{\xi}, \cW_{\xi}} \leftarrow N \mathscr{C}_{Y_{\xi}, \cV_{\xi}'}  $ 
are quasi-equivalences for all $\xi \in \CY$. 
\end{lemma}

\begin{proof} Given a set of finite affine covers $\{\cV_{\xi}\}_{\xi \in \CY}$, each indexed by sets $I_{\xi}$, we claim that the desired compatibility condition is satisfied as soon the assignments $\xi \mapsto I_{\xi}$ and the refinement maps $c_{\eta, \xi}: I_{\eta} \to I_{\xi}$ define a functor 
$I: \CY^{\leq} \to \mathrm{FinSet}$
from the poset category $\CY^{\leq}$ to the category of finite sets. Indeed, in this case, for any chain $\alpha \leq \beta \leq \gamma$ of elements in $\CY$,  the equality
\begin{equation} \label{eqn:schains} c_{\gamma, \beta} \circ c_{\beta, \alpha}=c_{\gamma, \alpha}, \end{equation}
implies that for any objects $\cF,\cG$ of $N \mathscr{C}_{Y_{\xi_1}, \cV_{\xi}}$, the composition 
\[ \Hom_{N \mathscr{C}_{Y_{\alpha}, \cV_{\alpha}}}(\cF,\cG) \to \Hom_{N \mathscr{C}_{Y_{\beta}, \cV_{\beta}}}(p_{\beta, \alpha}^*\cF, p_{\beta, \alpha}^*\cG) \to \Hom_{N \mathscr{C}_{Y_{\gamma}, \cV_{\gamma}}}(p_{\gamma, \alpha}^*\cF,p_{\gamma, \alpha}^*\cG)  \] 
of the pullback functors agrees with the pullback functor
\[ \Hom_{N \mathscr{C}_{Y_{\alpha, \cV_{\alpha}}}} (\cF,\cG) \to \Hom_{N \mathscr{C}_{Y_{\gamma}, \cV_{\gamma}}}(p_{\gamma, \alpha}^*\cF,p_{\gamma, \alpha}^*\cG).  \]

To construct our desired compatible covers, we first independently choose finite affine covers $\cV_{\xi}^o=\{V_{\xi, i}^o\}_{i \in I_{\xi}}$ for all $\xi \in \CY$. Then for each $\xi \in \CY$, define an indexing set 
\[ J_{\xi} = \prod_{\nu \leq \xi} I_{\nu}, \]
so, e.g., if $\xi_0$ is a minimal element of $\CY$, then $J_{\xi_0}=I_{\xi_0}$. 
Note that $J_{\xi}$ is finite since there are only finitely many $\nu$ below $\xi$.  For $j=(j_{\eta})_{\nu \leq \xi} \in J_{\xi}$, define 
\[ V_{\xi, j} = \bigcap_{\eta \leq \xi} p_{\eta, \xi}\inv(V_{\nu, j_{\nu}}^o). \]
It is a general fact that if $f: X \to Y$ is a morphism between two separated schemes, and $U$ and $V$ are affine in $X$ and $Y$ respectively, then $f\inv(V) \cap U = V \times_{Y} U$ is affine \cite[Prop. 5.5.10]{EGA1}; we can apply this fact to conclude that  $V_{\xi,j}$, which is built out of a sequence of such intersections, is affine itself, so $\cV_{\xi} = \{V_{\xi, j}\}_{j \in J_{\xi}}$ defines a new finite affine cover of $Y_{\xi}$. 

For any pair $\eta, \xi \in \CY$ with $\eta > \xi$, there is a natural projection map $J_{\eta} \to J_{\xi}$. These projection maps satisfy the equality in Equation \ref{eqn:schains}, and moreover, define refinements between the affine covers.  We conclude that the system of refinements $\{\cV_{\xi}\}_{\xi \in \CY}$ satisfies our desired conditions. 

Now suppose we have two such choices $\{\cV_{\xi}\}_{\xi \in \CY}$ and $\{\cV_{\xi}'\}_{\xi \in \CY}$. For each $\xi \in \CY$, let
$J_{\xi} = I_{\xi} \times I_{\xi}',$
and define, for each $j=(i, i') \in J_{\xi}$, a new finite affine cover $\cW_{\xi}$ indexed by $J_{\xi}$ with
$W_{\xi, j} = V_{\xi, i} \cap V_{\xi, i'}. $
For $\eta > \xi$, the map $c_{\eta, \xi} \times c_{\eta, \xi}'$ defines a refinement map $J_{\eta} \to J_{\xi}$, and these maps induce a diagram of dg enhancements $\{N \mathscr{C}_{Y_{\xi}, \cV_{\xi}}\}_{\xi \in \CY} \rightarrow \{N \mathscr{C}_{Y_{\xi}, \cW_{\xi}}\}_{\xi \in \CY}$.

The projection maps $J_{\xi} \to I_{\xi}$ define a refinement of $\cV_{\xi}$ to $\cW_{\xi}$ for each $\xi$, and these define a diagram of categories 
\[ \{N \mathscr{C}_{Y_{\xi}, \cV_{\xi}}\}_{\xi \in \CY} \rightarrow \{N \mathscr{C}_{Y_{\xi}, \cW_{\xi}}\}_{\xi \in \CY} \]
restricting to a dg quasi-equivalence for any $\xi \in \CY$. To conclude we note that there is a similar diagram map from $\{N \mathscr{C}_{Y_{\xi}, \cV_{\xi}'}\}_{\xi \in \CY}$ as the construction is symmetric on both sides. 
\end{proof} 

%

\begin{definition} \label{def:y} Given a choice of compatible  \v{C}ech covers, we fix dg enhancements 
\[ \coh Y_\xi := N \mathscr{C}_{Y_{\xi}, \cV_{\xi}}. \]
We then define
$$ \coh \Yuniv: = \holim_{\xi \in \CY} \coh Y_\xi
$$
using the model of the homotopy colimit of a diagram of $A_{\infty}$ categories appearing in \cite[Appendix A]{GPS2}. This gives us an $A_{\infty}$ category (in fact, a dg category, but this doesn't matter for our purposes). The second statement in Lemma \ref{lem:cechcovers} combined with  \cite[Lemma A.8]{GPS2} ensures that  $\coh \Yuniv$ is well-defined up to $A_{\infty}$ quasi-equivalence. 
 (As it contains the limit of the structure sheaves, which has infinite dimensional self-homs, we choose the notation $\coh \Yuniv$ over $\perf \Yuniv.$)
\end{definition}

\begin{definition} \label{def:d} Given a compatible \v{Cech} cover $\{\cV_{\xi}\}_{\xi \in \CY}$ of the system of boundary divisors $\{D_{\xi}\}_{\xi \in \CY}$ (obtained, e.g., by restricting one of the compatible covers constructed above), we define 
\[ \perf D_\xi := N \mathscr{C}_{D_{\xi}, \cV_{\xi}}. \]

After fixing a system of dg enhancements of $\perf D_{\xi}$, we define 
$$ \perf D_{\un} := \holim_{\xi \in \CY} \perf D_\xi. 
$$
\end{definition}
%

\subsection{Homological mirror symmetry for $(Y_\xi, D_\xi)$: review}

Homological mirror symmetry for $(Y_\xi, D_\xi)$ was established in \cite{HK1}. We give an overview of what we will use. For all the variants of the Fukaya category that we consider, assume throughout that we are working with the twisted complex enhancements, and using $\bZ$ gradings and $\bC$ coefficients.
Our starting point is the following:

\begin{theorem}\cite{HK1}\label{thm:original-hms}
Fix $\xi \in \CY$. Then there exists a Weinstein manifold $M_\xi$ and a Weinstein Lefschetz fibration $w_\xi: M_\xi \to \bC$, with smooth fibre $\Sigma_\xi$, such that we have the following triple of $A_\infty$ quasi-isomorphisms:
\begin{eqnarray}
\cF (\Sigma_\xi) & \simeq & \pperf D_\xi \\
 \cF^\to (w_\xi )& \simeq & \ccoh Y_\xi \\
\cW(M_\xi) & \simeq & \ccoh U_\xi
\end{eqnarray}
\end{theorem}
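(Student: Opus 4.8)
The plan is to build the Weinstein manifold $M_\xi$ and the fibration $w_\xi$ directly from the combinatorial data of the explicit toric model, and then to prove the three equivalences in the order listed, each feeding into the next. I would first treat the toric base case: the toric surface $(\oY,\oD)$ underlying the model for $(Y_\xi,D_\xi)$ has a standard mirror Weinstein Lefschetz fibration $\oW \to \C$ whose general fibre is a punctured Riemann surface dual to the cycle of rational curves $\oD$, and whose thimbles realise a full strong exceptional collection on $\oY$ (toric homological mirror symmetry). Each interior blow-up at a distinguished point of a component $\oD_\nn$ is then implemented symplectically by attaching a Weinstein $2$-handle along an embedded curve in the fibre that links the corresponding puncture; performing the attachments recorded by $S_\xi$ produces $w_\xi\colon M_\xi \to \C$ with smooth fibre $\Sigma_\xi$. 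One must check that $M_\xi$ is well defined up to Weinstein deformation, in particular that corner blow-ups of $D_\xi$ leave it unchanged, consistently with $U_\xi = U_{\xi'}$, and that in the minimal cases the construction recovers the Milnor fibre of the relevant cusp singularity.

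The first equivalence, $\cF(\Sigma_\xi) \simeq \Perf D_\xi$, is one-dimensional homological mirror symmetry: $\Sigma_\xi$ is an explicit punctured surface and $D_\xi$ a cycle of $\bP^1$'s, i.e.\ a degenerate genus-one curve, and I would identify $\Sigma_\xi$ with the correct mirror curve and invoke the known HMS statements for such curves, tracking gradings and the $\C$-linear twisted-complex enhancements throughout.

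The heart of the argument is the second equivalence, $\cF^\to(w_\xi) \simeq \Coh(Y_\xi)$. The directed Fukaya category is generated by the Lefschetz thimbles, with morphism spaces computed from the vanishing cycles inside $\Sigma_\xi$; on the mirror side one fixes a generating collection on $Y_\xi$ built from line bundles together with structure sheaves of components of $D_\xi$. I would argue by induction on the number of interior blow-ups: the base case is the toric HMS above, and in the inductive step a single fibre handle-attachment must be matched, at the level of $A_\infty$ categories, with the blow-up of a point of $D$ --- realised on the B-side by the semiorthogonal decomposition of the derived category of a point blow-up --- so that the new thimble is mirror to the exceptional $(-1)$-curve and all higher products, signs, and grading shifts agree. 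Compatibility of the fibre-level equivalence of the previous step with the restriction-to-the-fibre functors on both sides is what keeps this bookkeeping tractable.

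Finally, $\cW(M_\xi) \simeq \Coh(U_\xi)$ follows by localization. On the symplectic side $\cW(M_\xi)$ is the quotient of the partially wrapped category $\cF^\to(w_\xi)$, partially wrapped relative to the fibre stop, by the subcategory generated by the linking disks of that stop (stop removal); on the algebraic side $\Coh(U_\xi)$ is the Verdier quotient of $\Coh(Y_\xi)$ by the subcategory of sheaves supported on $D_\xi$, which is generated by the structure sheaves of the components of $D_\xi$. Since the equivalence of the previous step carries linking disks to these structure sheaves up to twist, it descends to the desired equivalence of quotients. I expect the main obstacle to be exactly the inductive step of the Lefschetz-fibration statement: controlling, on the nose and with grading and sign data, the effect of one fibre handle-attachment and showing it corresponds to a point blow-up on $D$; the fibre-level and wrapped statements are comparatively formal once this is in hand.
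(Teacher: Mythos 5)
This theorem is quoted from \cite{HK1} rather than proved in the paper, but the structures the paper recalls from that reference --- the Weinstein handlebody/Lefschetz construction of $M_\xi$ from the explicit toric model, the identification of $\Sigma_\xi$ with the punctured elliptic curve mirror to the cycle $D_\xi$, the matching of the distinguished vanishing cycles (meridians and twisted longitudes) with the exceptional collection built from Orlov's blow-up formula together with toric line bundles, and the passage to $\cW(M_\xi)\simeq\Coh(U_\xi)$ by stop removal/localization mirroring restriction to $U_\xi$ --- are exactly the ones your outline reproduces, including the inductive matching of fibre handle-attachments with point blow-ups on $D$. Your proposal is therefore a correct sketch of essentially the same approach as the cited proof.
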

Here for each $\xi$, $\ccoh (Y_\xi)$, respectively $\ccoh U_\xi$, and $\pperf D_\xi$ are explicit $A_\infty$ enhancements of $D(Y_\xi)$, respectively $D(U_\xi)$ and $\Perf D_\xi$.
There are also explicit models for the $A$-side categories, along with collections of generators on each side, and a preferred equivalence matching them up. 

%
%

By Lunts-Orlov, $\ccoh (Y_\xi)$, respectively $\ccoh U_\xi$ and $\pperf D_\xi$, are abstractly $A_\infty$ quasi-isomorphic to, respectively, $\coh Y_\xi$, $\coh U_\xi$ and $\perf D_\xi$ \cite[Theorems 2.12]{Lunts-Orlov}. 
Say the second HMS equivalence above is $i_\xi: \cF^\to (w_\xi ) \to \ccoh Y_\xi$.  Strong uniqueness of dg enhancements of $D(X)$ for a smooth projective $X$ \cite[Theorem 2.14]{Lunts-Orlov} implies that we can fix an $A_\infty$ quasi-isomorphism $s_\xi:  \ccoh Y_\xi \to \coh  Y_\xi$ so that $H^0 ( i_\xi) = H^0 (s_\xi \circ i_\xi): H^0(\cF^\to (w_\xi )) \to  D(Y_\xi) $. 
Similarly for the other two HMS isomorphisms, by considering restriction/localisation maps between the different categories.  (For $U_\xi$, the dg category $\coh Y_\xi$ localises to a third enhancement of $D(U_\xi)$, say $\mathnormal{c}\mathrm{oh} \, U_\xi$, defined in terms of the restriction of the \v{Cech} cover on $Y_\xi$, and we also use the fact that the natural dg functor $\mathnormal{c}\mathrm{oh} \, U_\xi \to \coh U_\xi$ induces an isomorphism of homotopy categories.)

We will want compatibilities between these HMS quasi-isomorphisms within the directed system $\CY$. In order to establish these, we first briefly recall features from \cite{HK1}.

\subsubsection{Weinstein handlebody description}
Given $\xi \in \CY$, $M_\xi$ can be described, first, as a Weinstein handlebody. This is given by starting with the disc bundle $D^\ast T^2$, and, for each ray $\R_{\geq 0} \mathbf{n}$ in the toric fan, attaching $m_\mathbf{n}$ Weinstein 2-handles along the Legendrian $S^1$ in $\partial D^\ast T^2$ which is the conormal lift of $\R \mathbf{n}$ with its orientation (here $T^2 = ( \bZ^2 \otimes \bR) / \bZ^2$).

\subsubsection{Lefschetz fibration $w_\xi$}
Secondly, $\xi \in \CY$ determines a Weinstein Lefschetz fibration $w_\xi: M_\xi \to \bC$. The smooth fibre of $w_\xi$ is a Weinstein 2-manifold $\Sigma_\xi$. 
 Say the boundary divisor $D_\xi$ has components indexed by rays $\nn_1, \ldots, \nn_k$ in the toric fan for $\bar{Y}_{\xi}$. 
Then  $\Sigma_\xi$ is a $k$ punctured elliptic curve, with the $k$ meridians (between successive punctures) also naturally indexed by the rays of the fan.
For notational simplicity, set $\bar{D}_i := (\bar{D}_\xi)_{\nn_i}$ and $m_i := m_{\nn_i}$. The triangulated category $D (Y_\xi)$ has the following full exceptional collection :
\begin{multline}\label{eq:exceptional-collection}
\cO_{\Gamma_{km_k}}(\Gamma_{km_k}), \cdots, \cO_{\Gamma_{k1}}(\Gamma_{k1}), \cdots, \cO_{\Gamma_{1m_1}}(\Gamma_{1m_1}),\cdots, \cO_{\Gamma_{11}}(\Gamma_{11}), \cO,
p^\ast \cO(\bar{D}_1), \\ \cdots, p^\ast\cO(\bar{D}_1+ \cdots + \bar{D}_{k-1})
\end{multline}
where $\Gamma_{ij}$ is the pullback of the $j$th exceptional curve over $\bar{D}_i$, for $i=1, \ldots, k$, $j=1, \ldots, m_i$ and $p$ denotes the blow-down map. 

On the mirror side, these correspond to a distinguished collection of vanishing cycles for $w_\xi: M_\xi \to \bC$. This is an ordered collection of exact $S^1$s on $\Sigma_\xi$, given by:
\begin{equation}\label{eq:vanishing-cycles}
\{ W_{ij} \}_{i=1, \ldots, k, j=1, \ldots, m_i}, V_0, \ldots, V_{k-1}
\end{equation}
where for fixed $i$, $W_{ij}$ is a copy of the $i$th meridian, and $V_\ell$ is given by starting with a reference longitude $V_0$ and applying $\bar{D}_i \cdot (\bar{D}_1 + \ldots + \bar{D}_{\ell-1})$ twists in the $i$th meridien. 

\subsubsection{Almost-toric fibration $\pi_\xi$.}\label{sec:almost-toric-fibration-initial}
Thirdly, $M_\xi$ is the total space of an almost-toric fibration determined by $\xi$, say $\pi_\xi: M_\xi \to B_\xi$. 
Let $\bR^2_\xi$ denote the following integral affine structure on a topological $\bR^2$.  Take the singular points to be precisely the set $S_\xi$. 
If a singular point is a multiple of $( 0, 1)^T$, take the local integral affine monodromy to be 
\begin{equation}\label{eq:monodromy-matrix} \begin{pmatrix}1  & 0 \\ 1 &1 \end{pmatrix}\end{equation} 
(no shearing: the invariant direction goes through the origin), and for general $\nn$, its conjugate by an element of $\SL_2(\bZ)$ mapping $\nn$ to $(0, 1)^T$. 
Now take $B_\xi \subset \bR^2_\xi$ to be a convex closed subset containing all of the points of $S_\xi$, and whose boundary is smooth with respect to the integral affine structure.  
The total space of an almost-toric fibration with integral affine base $B_\xi$ is determined up to symplectomorphism \cite[Corollary 5.4]{Symington}. In particular, the fibration has $m_\nn$ nodal fibres with invariant direction $\nn$, all colinear, and no other singular fibres.

The following explicit model  will be useful: start with $T^\ast T^2 \simeq T^\ast \bR^2 / \bZ^2 \simeq \bR^2 \times T^2$, and for each $\vv \in S_\xi$, `cut open' the $\bR^2$ base in invariant direction $\vv$ until reaching $\vv$, glue back using the pullback action of the relevant $\SL_2(\bZ)$ conjugate of $ \begin{pmatrix}1  & 0 \\ 1 &1 \end{pmatrix}$
and then fill back in a nodal fibre. (The Ng\d{o}c invariants of the nodal fibres do not matter; see \cite{Evans} for more background.) We have the following symplectic features:

\begin{itemize}

\item 
Assume that   $q_i$ are coordinates on $T^2$ and $p_i$ their dual coordinates. Then the one-form $\sum_i p_i dq_i$ is invariant under pull-back by $\SL_2(\bZ)$ matrices, and, away from singular fibres, descends to a well-defined primitive form for the symplectic form on $M_\xi$. In particular, we can take this to be the Liouville form near the boundary of $M_\xi$. 

\item
The fibration $\pi_\xi$ has a preferred Lagrangian section, given by taking the image of $T^\ast_{\{ 0 \}} T^2$, as $\{ 0 \} \in T^2 = \bR^2 / \bZ^2$ is fixed by all of the $\SL_2(\bZ)$ maps used to introduce branch cuts. 

\item There is a preferred Lagrangian disc above the segment  $\R_{\geq m_{\nn}} \nn \cap B_\xi$, the `visible Lagrangian' of Symington \cite{Symington}. In the Weinstein handlebody model, it corresponds to the Lagrangian co-core of the $m_\mathbf{n}$ th Weinstein 2-handle attachment in that direction. Similarly, there is a visible Lagrangian sphere above the segment from $\vv = i \nn$ to $(i+1) \nn$, say $S_\vv$, for all $i = 1, \ldots, m_\nn-1$ (the union of the co-core and core of consecutive handle attachments in the same direction). 
\end{itemize}

As long as the closed set $B_\xi$ is convex and large enough to contain all points of $S_\xi$, its precise choice will not matter: completing the Liouville domain by gluing on half-infinite ends, we get a Liouville manifold which is immediately identified with the total space of the almost-toric fibration with base the whole of $\bR_\xi^2$ (and in particular, we readily have preferred identifications between any such completions).

The spheres $S_\vv$ are also distinguished from the perspective of mirror symmetry. We will use the following:

\begin{lemma}\label{lem:mirrors-of-(-2)-curves-blowup} \cite[Proposition 5.2]{HK2}
Let $\xi \in \CY$, and suppose that $m_\nn \geq 2$ for some ray $\nn$ of  $\xi$. Let $C_{\vv} \subset Y_\xi$ denote the $(-2)$ curve which is the  strict transform of the exceptional curve introduced by the blow-up indexed by $\vv = i \nn$, some $ 1 \leq i \leq m_\nn-1$. Then under the equivalences of Theorem \ref{thm:original-hms}, $i_\ast \cO_{C_{\vv}} (-1)$ is mirror to the visible Lagrangian sphere $S_\vv$. For other values of $k \in \bZ$, the mirror to $i_\ast \cO_{C_{\vv}} (-k)$ is also a Lagrangian two-sphere, fibred over the same segment in $B_\xi$ as $S_\vv$, and can be obtained by applying a Lagrangian translation to $S_\vv$. 
\end{lemma}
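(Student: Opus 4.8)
The plan is to reduce the statement to the standard local model of mirror symmetry for a single $(-2)$-curve, using that each sheaf $i_\ast\cO_{C_\vv}(-k)$ is supported on $C_\vv$ and that $S_\vv$ lies in a neighbourhood of the edge it fibres over. Performing the $m_\nn$ interior blow ups at the distinguished point in direction $\nn$ produces, inside $U_\xi$, a chain of $(-2)$-curves $C_\nn, C_{2\nn},\dots,C_{(m_\nn-1)\nn}$ followed by a terminal $(-1)$-curve; thus for $\vv=i\nn$ the curve $C_\vv$ satisfies $C_\vv\cong\bP^1$ and $N_{C_\vv/U_\xi}\cong\cO_{\bP^1}(-2)$, so a neighbourhood of $C_\vv$ in $U_\xi$ is the total space of $\cO_{\bP^1}(-2)$, the minimal resolution of the $A_1$ surface singularity. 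On the symplectic side, $S_\vv$ fibres over the edge $[\,i\nn,(i+1)\nn\,]$ of $\bR^2_\xi$ joining two consecutive nodes of $\pi_\xi$, and by the standard local analysis of almost-toric bases near such an edge (cf.\ \cite{Symington,Evans}) a neighbourhood of $S_\vv$ in $M_\xi$ is Weinstein-deformation equivalent to $T^\ast S^2$ (the $A_1$ Milnor fibre), with $S_\vv$ the zero section; concretely $S_\vv$ is the union of the co-core of the $i$th Weinstein $2$-handle in direction $\nn$ and the core of the $(i+1)$st, glued along their common attaching circle.

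The next step is to match these two local pictures through the equivalences of Theorem~\ref{thm:original-hms}. Following \cite[Prop.~5.2]{HK2}, one can expand $i_\ast\cO_{C_\vv}(-1)$ as a twisted complex in the exceptional collection \eqref{eq:exceptional-collection}: writing $\Gamma_j$ for the total transform of the $j$th exceptional curve over $\oD_\nn$, one has $\Gamma_j=C_j+\Gamma_{j+1}$ as divisors and $C_j\cdot\Gamma_{j+1}=1$, hence short exact sequences $0\to i_\ast\cO_{C_j}(-1)\to\cO_{\Gamma_j}\to\cO_{\Gamma_{j+1}}\to 0$; twisting these and bookkeeping the resulting line bundle factors (all trivial near $C_\vv$) expresses $i_\ast\cO_{C_\vv}(-1)$, with $\vv=i\nn$, in terms of the two consecutive exceptional objects $\cO_{\Gamma_i}(\Gamma_i)$ and $\cO_{\Gamma_{i+1}}(\Gamma_{i+1})$. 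Under mirror symmetry these two objects are the thimbles over the consecutive meridian vanishing cycles in direction $\nn$, i.e.\ the co-cores of the $i$th and $(i+1)$st $2$-handles, so the mirror of $i_\ast\cO_{C_\vv}(-1)$ is a mapping cone on the morphism between those co-cores; by the handle geometry recalled above (co-core $\cup$ core $=$ zero section of $T^\ast S^2$) this cone is represented by the embedded sphere $S_\vv$. Equivalently, one checks that $\cW(M_\xi)\simeq\Coh(U_\xi)$ restricts to an equivalence between the subcategory generated by the compact Lagrangian $S_\vv$ and the subcategory of $\Coh(U_\xi)$ of sheaves supported on $C_\vv$ — both copies of the derived category of the resolved $A_1$ singularity — under which the standard local mirror symmetry sends the zero section to $i_\ast\cO_{C_\vv}(-1)$. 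I expect this matching to be the main obstacle: identifying the abstract mapping cone with the specific embedded $S_\vv$ (equivalently, pinning down the restriction of the equivalence of Theorem~\ref{thm:original-hms} to the two $A_1$ local models) requires feeding in the explicit almost-toric and Weinstein-handlebody descriptions of $M_\xi$ from \cite{HK1}, not merely formal properties of the categories, while keeping track of the twists in the exceptional-collection expansion is the routine but fiddly part.

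Finally, for general $k\in\bZ$, write $i_\ast\cO_{C_\vv}(-k)=i_\ast\cO_{C_\vv}(-1)\otimes L$ with $L$ a line bundle on $Y_\xi$ of degree $1-k$ on $C_\vv$. Tensoring by line bundles corresponds under mirror symmetry to the Lagrangian translation symplectomorphisms of the almost-toric fibration $\pi_\xi$ (fibrewise translation, mirror to $\Pic$, as in \cite{HK1,HK2}); such a translation preserves the fibration over the edge $[\,i\nn,(i+1)\nn\,]$ and hence carries $S_\vv$ to another Lagrangian two-sphere fibred over that same edge, which is the asserted mirror of $i_\ast\cO_{C_\vv}(-k)$.
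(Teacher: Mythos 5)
The paper does not actually prove this lemma: it is stated with a ``cf.'' citation to \cite[Proposition 5.2]{HK2} and no argument is given, so there is no in-paper proof to compare against. Your reconstruction follows the same route as that reference and is essentially correct: the decomposition $\Gamma_j = C_j + \Gamma_{j+1}$ gives $0\to i_\ast\cO_{C_j}(-1)\to\cO_{\Gamma_j}\to\cO_{\Gamma_{j+1}}\to 0$, exhibiting $i_\ast\cO_{C_\vv}(-1)$ (up to the twists) as a cone between consecutive exceptional objects whose mirror thimbles have boundary the two parallel copies $W_{\nn,i}$, $W_{\nn,i+1}$ of the same meridian; the cone is the matching sphere, i.e.\ the core-union-co-core sphere $S_\vv$; and the remaining twists $i_\ast\cO_{C_\vv}(-k)$ are handled by tensoring with line bundles, mirror to Lagrangian translations preserving the fibration over the segment from $i\nn$ to $(i+1)\nn$. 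Two small imprecisions worth fixing: the twisting line bundles are not ``trivial near $C_\vv$'' --- for instance $\cO(\Gamma_j)|_{C_j}$ has degree $-1$, since $C_j^2=-2$ and $C_j\cdot\Gamma_{j+1}=1$ --- so the bookkeeping you defer genuinely has to be carried out to land on the twist $\cO_{C_\vv}(-1)$ rather than some other $\cO_{C_\vv}(-k)$; and the terminal exceptional $(-1)$-curve meets $D_\xi$, so only the $(-2)$-curves of the chain lie in $U_\xi$ (likewise a neighbourhood of $C_\vv$ is the total space of $\cO_{\bP^1}(-2)$ only after discarding the adjacent curves of the chain, which is harmless here).
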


(Recall that Lagrangian translations are a class of symplectomorphisms introduced in \cite{HK2}.)

 \subsubsection{Describing $\cF^\to (w_\xi)$ in terms of $\pi_\xi$}

In the HMS paper \cite{HK1}, results about Fukaya categories are essentially all done in terms of the Lefschetz fibration. 
It will sometimes be better to work with the almost-toric fibration $\pi_\xi: M_\xi \to B_\xi$ instead, following  \cite{Sylvan, GPS1, GPS2}.

\begin{definition}
Let $\mathfrak{f}_\xi \subset \partial M_\xi$ be given by the union of the following Legendrians:
\begin{itemize}
\item the boundary of the distinguished Lagrangian section of $\pi_\xi$ inherited from $T^\ast_{ \{ 0 \} } T^2$ (described in the second bullet point in Section \ref{sec:almost-toric-fibration-initial});
\item the boundaries of each of the visible Lagrangian thimbles living over the branch cuts $\R_{ \geq m_\nn } \nn \cap B_\xi$  (described in the third bullet point in Section \ref{sec:almost-toric-fibration-initial}). 
\end{itemize}
\end{definition}

The subset $\mathfrak{f}_\xi \subset \partial M_\xi$ is a mostly Legendrian stop in the sense of  \cite[Definition 1.7]{GPS2}. Following \cite[Section 2]{GPS2}, let $\W(M_\xi, \frak{f}_\xi)$ denote the wrapped Fukaya category of $M_\xi$ stopped at $\mathfrak{f}_\xi$.  (We work with Liouville manifolds with stops rather than Liouville sectors.)
\begin{lemma} \label{lem:directed-category-iso-stopped-category}
There is an isomorphism of $A_\infty$ categories:
$$ \cF^{\to} (w_\xi) \simeq \W(M_\xi, \frak{f}_\xi)
$$
\end{lemma}

\begin{proof}
By \cite[Corollary 1.17 and Proposition 8.20]{GPS2}, we have an $A_\infty$ quasi-isomorphism between  $\cF^{\to} (w_\xi) $ and $ \W(M_\xi, \frak{f})$, where $\frak{f}$ is the Weinstein skeleton of a copy of $\Sigma_\xi$ `at infinity'.
We want to identify  $\frak{f}$ with  $\frak{f}_\xi$. 

We inspect the identification set up in \cite[Sections 5 and 6]{HK1} to  describe $\frak{f}$ in terms of the almost-toric fibration. Fix a reference fibre near infinity in the Lefschetz fibration, say  $\Sigma_{\xi, \star}$.
We can choose the Liouville forms so that the Lagrangian skeleton of  $\Sigma_{\xi, \star}$ is, in the notation of \cite[Definition 3.1]{HK1}, the union of the reference longitude $V_0$ and all of the meridians $W_\mathbf{n}$, where $\mathbf{n}$ varies over all rays of $\xi$. 
These are the boundaries of well-understood Lagrangian thimbles for $w_\xi$. In particular, we see that under our identification with the Weinstein handlebody description of $M_\xi$, $V_0$ is the boundary of a cotangent fibre of $T^2$ \cite[Section 5.2]{HK1}, and each $W_\mathbf{n}$ the boundary of the co-core of the $m_\mathbf{n}$th (i.e.~final) handle attachment for direction $\mathbf{n}$ \cite[Section 6.2]{HK1}.
In turn, in terms of the almost-toric fibration $\pi_\xi$, we get from Section \ref{sec:almost-toric-fibration-initial} that $\frak{f}$ lies over the boundary of $B_\xi$ and is given by the union of the restrictions of the distinguished Lagrangian section inherited from $T^\ast_{\{0\}} T^2$, which corresponds to $V_0$; and of each of the visible Lagrangian thimbles living over branch cuts, which correspond to the $W_{\mathbf{n}}$. This completes the proof.
\end{proof}

This is independent of our choices for the almost-toric model: suppose $B_\xi' \subset \bR^2_\xi$ is a second convex subset containing all of $S_\xi$, with smooth boundary. Let $M_\xi' = \pi_\xi^{-1} (B_\xi')$, and let $\frak{f}_\xi'$ be the restriction of the same Lagrangian discs as above to $\pi_\xi^{-1} (\partial B_\xi')$. We later repeatedly use the following:

\begin{lemma}\label{lem:change-ATF-base-stop}
With the notation as above, there are natural $A_\infty$ quasi-isomorphisms
$$
\cW(M_\xi, \frak{f}_\xi) \simeq \cW(M_\xi', \frak{f}_\xi') \qquad \text{and} \qquad \cW(M_\xi) \simeq \cW(M_\xi')
$$
\end{lemma}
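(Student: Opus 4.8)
\emph{Proof plan.} The strategy is to show that all four categories in the statement are quasi-isomorphic to the corresponding (partially) wrapped category of a single Liouville manifold: the Liouville completion $\widehat{M}_\xi$ of $M_\xi$, which as recorded above is canonically identified, independently of the choice of $B_\xi$, with the total space of the almost-toric fibration $\widehat\pi_\xi$ over the \emph{whole} of $\bR^2_\xi$, carrying the globally defined Liouville form $\sum_i p_i\,dq_i$. Throughout, the equivalences produced will be the canonical restriction/continuation functors attached to Liouville inclusions and completions, so naturality and mutual compatibility are automatic.

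\emph{The unstopped statement.} Since $M_\xi$ and $M_\xi'$ are Liouville domains with the same completion $\widehat{M}_\xi$ (via the preferred identification noted above), the restriction functors give quasi-isomorphisms $\cW(M_\xi)\simeq\cW(\widehat{M}_\xi)\simeq\cW(M_\xi')$ by invariance of the wrapped Fukaya category under Liouville completion \cite{GPS1}. Concretely, when $B_\xi\subset B_\xi'$ are nested (which, together with a common convex enlargement $B_\xi''\supset B_\xi\cup B_\xi'$ and transitivity, reduces the general case), the complement $\pi_\xi^{-1}(B_\xi'\setminus\mathrm{int}\,B_\xi)$ contains no nodal fibres (all of $S_\xi$ lies in $\mathrm{int}\,B_\xi$) and is therefore a trivial Liouville cobordism -- a piece of the symplectisation of $\partial M_\xi$ -- so the inclusion $M_\xi\hookrightarrow M_\xi'$ induces the desired equivalence directly.

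\emph{The stopped statement.} Carry the stops along. By the identification set up above, $\frak{f}_\xi$ is the union of the restrictions to $\pi_\xi^{-1}(\partial B_\xi)$ of (i) the distinguished Lagrangian section of $\pi_\xi$ inherited from $T^\ast_{\{0\}}T^2$ and (ii) the visible Lagrangian thimbles over the branch-cut rays $\R_{\geq m_\nn}\nn$. Both families are conical near infinity with respect to the Liouville vector field $Z=\sum_i p_i\partial_{p_i}$: the section is the image of $T^\ast_{\{0\}}T^2=\{q=0\}$, a union of $Z$-orbits, and each visible thimble lies over the segment $\{c\nn: c\geq m_\nn\}$, which away from the origin is likewise a union of $Z$-orbits (equivalently, the thimble is the co-core of the final handle attachment in direction $\nn$, which is conical). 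Hence in $\widehat{M}_\xi$ these discs extend canonically to properly embedded conical Lagrangians -- the global section over $\bR^2_\xi$ and the half-infinite thimbles over the rays $\R_{\geq m_\nn}\nn$ -- whose ideal Legendrian boundary $\Lambda_\xi\subset\partial_\infty\widehat{M}_\xi$ does not depend on $B_\xi$. Thus $\frak{f}_\xi$ is, near infinity, the stop modelled on $\Lambda_\xi$, and the restriction functors of the previous paragraph upgrade to $\cW(M_\xi,\frak{f}_\xi)\simeq\cW(\widehat{M}_\xi,\Lambda_\xi)\simeq\cW(M_\xi',\frak{f}_\xi')$ by the stopped version of completion invariance \cite{GPS1}.

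\emph{Main obstacle.} The only step that is not pure bookkeeping is the conicality claim in the last paragraph: that the chosen Lagrangian discs (section and branch-cut thimbles) are $Z$-invariant near infinity, so that passing from $\partial B_\xi$ to $\partial B_\xi'$ moves the stopped Liouville domain only by a Liouville isotopy rather than genuinely altering it. Granting this -- which can be verified directly in the explicit model of \cite{Symington, HK1}, since away from the (interior) nodal fibres the total space is literally $\bR^2_\xi\times T^2$ with Liouville form $\sum_i p_i\,dq_i$ and the section and visible thimbles are unions of $Z$-orbits in these coordinates -- the remaining content is the standard invariance of (partially) wrapped Fukaya categories under Liouville completion and under enlarging the domain.
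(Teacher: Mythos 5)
Your proposal is correct and follows essentially the same route as the paper: the paper's proof also observes that both stopped categories are defined via the Liouville completion, and that the two completions are canonically the total space of the full almost-toric fibration over $\bR^2_\xi$ with the same Liouville form and identical stops at infinity. Your write-up simply makes explicit the conicality of the section and visible thimbles with respect to $Z=\sum_i p_i\partial_{p_i}$, which is the point the paper leaves implicit in the phrase ``identical stops.''
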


\begin{proof}
This follows from the definitions in \cite{Abouzaid-Seidel} for the case with no stop. In the stopped case, recall that $\cW(M_\xi, \frak{f}_\xi)$ is defined using the completed Liouville manifold $M_\xi^+= M_\xi \cup \big( \partial M_\xi \times [0,\infty) \big)$, with stop $\frak{f}_\xi \times \{ \infty \} \subset \partial_\infty M_\xi^+$, the formal boundary at infinity; see \cite[Section 1.1]{GPS2}. As observed at the end of Section \ref{sec:almost-toric-fibration-initial}, both completions are the total spaces of the `full' almost-toric fibrations over $\bR^2_\xi$, with the same Liouville forms, and identical stops. 
\end{proof}

\subsection{Mirror constructions and directed systems} \label{sec:mirror-system-compatibility}

Suppose that $\xi, \xi' \in \CY$ with $\xi' \geq \xi$.  Then there are the following compatibilities between the mirror constructions.

\subsubsection{Weinstein handlebody.}
First, as a Weinstein handlebody, $M_{\xi'}$ is obtained by attaching $m_\nn'-m_\nn$ further Weinstein two-handles to $M_{\xi}$, for all rays in the fan for $(\oY_{\xi'}, \oD_{\xi'})$. This induces an inclusion of Weinstein domains $M_\xi \hookrightarrow M_{\xi'}$  (up to Weinstein homotopy, as in \cite[Section 11.6]{Cieliebak-Eliashberg}).

\subsubsection{Lefschetz fibration.}
Fix $\xi \in \CY$. Recall that the explicit toric model for  $(Y_\xi,D_\xi)$ determines a Lefschetz fibration $w_\xi$ with total space  $M_\xi$ and fibre $\Sigma_\xi$.

Now assume $\xi' \geq \xi \in \CY$. We have a natural inclusion of $i: \Sigma_\xi \hookrightarrow \Sigma_{\xi'}$, given by a handle attachment \cite[Proposition 3.11]{HK2}. The distinguished collection of vanishing cycles for $w_{\xi'}$ from Equation \ref{eq:vanishing-cycles} can be mutated to another collection, say $V_{\xi', 1}, \ldots, V_{\xi', l_{\xi'}}$,
 such that the final $l_{\xi}$ ones are the images under $i$ of a distinguished collection for $w_\xi$, say, $V_{\xi, 1}, \ldots, V_{\xi, l_\xi}$. See \cite[Section 3.2]{HK1}. (We will revisit this shortly when discussing the HMS compatibilities of inclusions.)

 These two inclusions induce an inclusion of total spaces $M_\xi \hookrightarrow M_{\xi'}$, which naturally agrees (up to Weinstein homotopy) with the inclusion of Weinstein handlebodies above. Recall that if there are only corner blow-ups, the two Lefschetz fibrations differ by a sequence of stabilisations, and the total spaces are naturally Weinstein deformation equivalent.

\subsubsection{Almost toric fibrations}

Suppose you have $\xi' \geq \xi$. Up to enlarging the collar neighbourhood of the boundary of $M_{\xi'}$, we can arrange for there to be an inclusion of integral affine manifolds   $B_\xi \subset B_{\xi'}$: $\pi_{\xi'}$ is given by starting with $\pi_\xi$, enlarging the base from $B_\xi$ to $B_{\xi'}$ and adding $m'_{\mathbf{n}} - m_\mathbf{n}$ nodal fibres with invariant direction $\mathbf{n}$ in $B_{\xi'} \backslash B_\xi$. The corresponding inclusion $M_{\xi} \subset M_{\xi'}$ is compactible with the previous ones.

\subsubsection*{Maps on Fukaya categories}
Recall that gradings for the Fukaya category depend on a choice of trivialisation of $(\Lambda^2 T^\ast M_\xi)^{\otimes 2}$. These choices form an affine space over $H^1(M_\xi; \bZ)$, which vanishes as soon as $m_{\nn_i} >0$ for two linearly independent $\nn_i$. When $H^1(M_\xi; \bZ) \neq 0$, we use the unique trivialisation given by restricting the trivialisation for an arbitrary $M_\eta$ with $\eta \geq \xi$. 

Now suppose $\xi' \geq \xi$. 
From the discussion above on compatibilities of structures, this induces preferred $A_\infty$ functors on the Floer-theoretic structures of interest:
\begin{itemize}

\item
 $\io_\ast: \cF (\Sigma_\xi ) \to \cF (\Sigma_{\xi'} )$ induced by  $\Sigma_\xi \hookrightarrow \Sigma_{\xi'}$;

\item $\io_\ast: \cF^{\to} (w_\xi) \to \cF^{\to} (w_{\xi'})$ induced by the inclusion of Lefschetz fibrations;

\item $\io^\ast: \cW(M_{\xi'}) \to \cW(M_{\xi})$, the Viterbo restriction from $M_{\xi'}$ to $M_\xi$. 
\end{itemize}

The first two are fully faithful: this follows from definitions plus Abouzaid's integrated maximum principle \cite[Lemma 7.5]{Seidel-book}.

\begin{remark}
Working with stops, the $A_\infty$ functor $\io_\ast: \cF^{\to} (w_\xi) \to \cF^{\to} (w_{\xi'})$ is equivalent to an $A_\infty$ functor $\io_\ast: \cW(M_\xi, \frak{f}_\xi) \to  \cW(M_{\xi'}, \frak{f}_{\xi'})$, though this isn't entirely straighforward to set up within the Liouville sector formalism of \cite{GPS1, GPS2}. 
\end{remark}

We next check that the HMS equivalences are compatible with the partial order on $\CY$.

\begin{proposition} \label{prop:hms-inclusion-compatibility}
Suppose $\xi' \geq \xi \in \CY$. 
Then the following diagrams commute up to $A_\infty$ homotopy:
$$
 \xymatrix{
\perf D_\xi \ar[r]^-\simeq  \ar[d]_{p^\ast} &  \cF(\Sigma_{\xi}) \ar[d]_{\io_\ast} \\
\perf D_{\xi'} \ar[r]^-\simeq  & \cF(\Sigma_{\xi'})  \\
}
\qquad 
\xymatrix{
\coh Y_{\xi} \ar[r]^-\simeq   \ar[d]_{p^\ast} &   \cF^\to (w_{\xi}) \ar[d]_{\io_\ast} \\
\coh Y_{\xi'}  \ar[r]^-\simeq  &   \cF^\to (w_{\xi'}) \\
}
\qquad
\xymatrix{
\coh U_{\xi} \ar[r]^-\simeq    &  \cW(M_{\xi}) \\
\coh U_{\xi'}  \ar[r]^-\simeq  \ar[u]_{i^\ast} & \cW(M_{\xi'})   \ar[u]^{\io^\ast} \\
}
$$
where $p: (Y_{\xi'}, D_{\xi'}) \to  (Y_{\xi}, D_{\xi}) $ is the blow-down map, and $i: U_\xi \to U_{\xi'}$ the inclusion.

\end{proposition}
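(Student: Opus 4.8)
The plan is to verify the three squares one at a time, reducing everything to the explicit generators-and-morphisms descriptions of the categories coming from \cite{HK1}, and then checking that the mirror functors built there are natural in the blow-up variable $\xi$. Throughout, I would treat the case of corner blow-ups separately (where $U_\xi = U_{\xi'}$, $S_\xi = S_{\xi'}$, and both sides change only by stabilisation/Weinstein deformation equivalence, so all three squares commute tautologically after unwinding the identifications), and then reduce to the case of a single interior blow-up at a distinguished point of some $\oD_{\nn}$, i.e.\ $S_{\xi'} = S_\xi \sqcup \{(m_\nn+1)\nn\}$. A general $\xi' \ge \xi$ is a finite composite of such elementary moves, and since all functors in sight are $A_\infty$ functors and the squares are asserted only up to $A_\infty$ homotopy, commutativity composes.

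For the first square, I would argue as follows. The inclusion $\Sigma_\xi \hookrightarrow \Sigma_{\xi'}$ is the handle attachment of \cite[Proposition 3.11]{HK2}: $\Sigma_{\xi'}$ is $\Sigma_\xi$ with one extra puncture, splitting the meridian $W_\nn$ region. On the B-side, $D_{\xi'} \to D_\xi$ is (after the corner blow-ups are absorbed) the identity on the relevant cycle of $\bP^1$'s except that one component $\oD_\nn$ of $D_\xi$ is replaced in $D_{\xi'}$ by a chain; pullback $p^\ast$ of the exceptional collection on $\Perf D_\xi$ lands inside $\Perf D_{\xi'}$ as a sub-exceptional-collection, with the extra objects being the (structure sheaves of, twisted) new $\bP^1$ components. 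The key point is that the preferred equivalence of \cite{HK1} matches the meridians $\{W_{\nn}\}$ of $\Sigma_\xi$ to the line-bundle/skyscraper generators of $\Perf D_\xi$ in a way that is dictated purely by the combinatorics of the cycle and its self-intersections; since $i_\ast$ sends each old meridian to (the image of) the corresponding old meridian and the exceptional collection on $D_{\xi'}$ restricts to that on $D_\xi$ in the matching way, the square commutes. Concretely I would check this on generators and on the Floer/$\Ext$ morphism spaces between them, using that both $i_\ast$ and $p^\ast$ are fully faithful, so it suffices to identify images of generators and the action on a generating set of morphisms.

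For the second and third squares the strategy is the same but the bookkeeping is heavier, and this is where the main obstacle lies. For the middle square, the subtlety flagged already in the excerpt is that the distinguished vanishing-cycle collection \eqref{eq:vanishing-cycles} for $w_{\xi'}$ is \emph{not} literally $\{$collection for $w_\xi\} \cup \{$new cycles$\}$; one must first mutate the $\xi'$-collection into a collection $V_{\xi',1},\dots,V_{\xi',l_{\xi'}}$ whose final $l_\xi$ entries are the $i$-images of a distinguished collection for $w_\xi$ (\cite[Section 3.2]{HK1}, \cite[Section 3.2]{HK2}). So the real content is: (a) the mutation of vanishing cycles is mirror to a mutation of the exceptional collection \eqref{eq:exceptional-collection} of $\Coh Y_\xi$ under which the final $l_\xi$ objects become $p^\ast$ of a full exceptional collection of $\Coh Y_{\xi'}$ — equivalently, that $p^\ast$ is exact, fully faithful, and identifies $\Coh Y_\xi$ with the subcategory generated by the tail of the mutated collection (the new objects being the structure sheaves $\cO_{\Gamma}(\Gamma)$ of the new exceptional curve(s)); and (b) the \cite{HK1} equivalence intertwines these two mutations. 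Point (b) is really a statement that the \cite{HK1} proof is "uniform in $\xi$": the equivalence is built cycle-of-$\bP^1$'s by cycle-of-$\bP^1$'s and handle by handle, so inserting one more handle / one more blow-up commutes with the construction. I would make this precise by invoking that in \cite{HK1} the equivalence on $\cF^\to(w_\xi)$ is pinned down by its effect on the distinguished basis together with the $A_\infty$ structure constants, both of which are determined by the local model near each handle; then $\io_\ast$ and $p^\ast$ agree because they agree handle-by-handle. The third square then follows by a localisation/stop-removal argument: by \cite{Sylvan, GPS1} (as recalled before Lemma~\ref{lem:change-ATF-base-stop}), $\cW(M_\xi)$ is the quotient of $\cF^\to(w_\xi) \simeq \cW(M_\xi, \frak f_\xi)$ by the subcategory of linking discs of the stop, and Viterbo restriction $\io^\ast: \cW(M_{\xi'}) \to \cW(M_\xi)$ is the map induced on quotients by $\io_\ast$ on the stopped categories (here I would use that, by Lemma~\ref{lem:change-ATF-base-stop} and the almost-toric picture, the inclusion $M_\xi \subset M_{\xi'}$ and the stops are compatible in the way that makes the co-restriction adjoint computable); on the B-side the corresponding statement is that restriction $i^\ast : \Coh U_{\xi'} \to \Coh U_\xi$ (equivalently $\Coh Y_{\xi'} \to \Coh U_{\xi'} \to \Coh U_\xi$, which equals $\Coh Y_{\xi'} \xleftarrow{p^\ast} \Coh Y_\xi \to \Coh U_\xi$ up to the image of the exceptional locus) is likewise the map on quotient categories induced by $p^\ast$. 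Since the middle square commutes and the two vertical quotient functors (symplectic and algebraic) are intertwined by the second square's equivalences, the third square commutes.

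The step I expect to be the genuine obstacle is point (b) above: pinning down that the HMS equivalence of \cite{HK1} is functorial/uniform enough that it commutes with the mutation-plus-handle-attachment. The cleanest route is probably not to re-examine the \cite{HK1} proof line by line, but to use a rigidity argument — both composites $\Perf D_\xi \to \cF(\Sigma_{\xi'})$ (resp.\ the $Y$-version) send the distinguished exceptional collection to an explicitly identified collection of Lagrangians with prescribed Floer cohomology, and a full exceptional collection together with the isomorphism classes of its objects and the $A_\infty$ structure on the total endomorphism algebra determines an $A_\infty$ category up to the relevant notion of equivalence (using formality/minimality of the relevant algebras as in \cite{HK1}); so it is enough to check the two composites agree on objects and on $\Hom$-spaces between the distinguished generators, which is a finite combinatorial check in each elementary move. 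I would also remark that the commutativity of the first square is used as an input to the second and third (the fibre $\Sigma_\xi$ sits inside $w_\xi$), so the squares should be proven in the stated left-to-right order.
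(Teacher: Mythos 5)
Your proposal follows essentially the same route as the paper's proof: reduce each square to checking the images of the explicit generating collections, handle the corner blow-up case by mutating the exceptional collection for $\xi'$ (citing the same results of \cite{HK1}), and obtain the third square from the second by the localisation/stop-removal description of $\cW(M_\xi)$. The extra scaffolding you add (reduction to elementary moves, the rigidity argument for uniformity of the \cite{HK1} equivalence) is a reasonable elaboration of what the paper leaves implicit, but it is not a different argument.
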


\begin{proof} By \cite[Theorems 1.2 and 1.5]{Genovese} (building on \cite{Toen}), the functor of triangulated categories
 $p^\ast: D (Y_\xi) \to  D (Y_{\xi'})$ has a unique dg lift to a dg functor $\coh Y_\xi \to \coh Y_{\xi'}$, up to dg homotopy. (To use this we just need, firstly, that the pair $(Y_\xi, Y_{\xi'})$ satisfies the hypotheses of \cite[Theorem 1.2]{Genovese}: they are separated and Noetherian, their product is Noetherian, and that for both of them any perfect complex is strictly perfect; and secondly, that $p^\ast$ is fully faithful.)  Similarly for   $p^\ast:  \Perf D_\xi \to \Perf D_{\xi'}$ and $i^\ast: D (U_{\xi'}) \to D (U_\xi)$. Thus to prove the claim, it is enough to show that the induced diagrams of homotopy diagrams commute.

Say again that the rays in the fan for $\xi$ are $\nn_1, \ldots, \nn_k$. We again use the notation $D_i = (D_\xi)_{\nn_i}$, $\bar{D}_i = (\bar{D}_\xi)_{\nn_i}$, and $m_i = m_{\nn_i}$. 
For each of the three homotopy-category diagrams, it's enough to check that the images of the generators in our explicit collections agree, as all of the vertical functors are fully faithful. 

\emph{First diagram.}
$\Perf(D_\xi)$ is generated by $\cO$ and  $i_\ast \cO_{x_i}$, $i=1, \ldots, k$, where $x_i$ is an explicit point on $D_i \backslash \sqcup_{j \neq i} D_j $. (Using the identification with $ \bC^\ast$ coming from our explicit choice of toric model, $x_i$ corresponds to $-1$.) 
These are mirror to explicit Lagrangian $S^1$s on $\Sigma_\xi$: a preferred longitude $V_0$, and $k$ meridians. 
Under $p^\ast$, $\cO_{D_\xi}$ is pulled back to $\cO_{D_{\xi'}}$, and $i_\ast \cO_{x_i}$ to $i_\ast \cO_{x_{i'}}$ (with the obvious relabeling of components). This is then immediately compatible with $\Sigma_\xi \hookrightarrow \Sigma_{\xi'}$.

\emph{Second diagram.} Recall we have two mirror full exceptional collections: for 
$D(Y_\xi)$, the one given by 
\begin{multline*}
\cO_{\Gamma_{km_k}}(\Gamma_{km_k}), \cdots, \cO_{\Gamma_{k1}}(\Gamma_{k1}), \cdots, \cO_{\Gamma_{1m_1}}(\Gamma_{1m_1}),\cdots, \cO_{\Gamma_{11}}(\Gamma_{11}), \cO,
p^\ast \cO(\bar{D}_1), \\ \cdots, p^\ast\cO(\bar{D}_1+ \cdots + \bar{D}_{k-1})
\end{multline*}
where $\Gamma_{ij}$ is the pullback of the $j$th exceptional curve over $\bar{D}_i$, for $i=1, \ldots, k$, $j=1, \ldots, m_i$; and on the mirror side, the distinguished collection of vanishing cycles for $w_\xi: M_\xi \to \bC$, 
$$
\{ W_{ij} \}_{i=1, \ldots, k, j=1, \ldots, m_i}, V_0, \ldots, V_{k-1} \subset \Sigma_\xi
$$
where for fixed $i$, $W_{ij}$ is a copy of the $i$th meridian, and $V_\ell$ is given by starting with our prefered longitude $V_0$ and applying $\bar{D}_i \cdot (\bar{D}_1 + \ldots + \bar{D}_{\ell-1})$ twists in the $i$th meridien. 

Assume first that  $\xi' > \xi$ is given by interior blow-ups. Then we immediately get that the second diagram commutes: the two vertical maps are mirror fully faithful inclusions of exceptional collections, with the bottom isomorphism given by iteratively adding mirror exceptional objects to the left of each of the collections. 

Now assume that $\xi' > \xi$  also involves corner blow-ups. Then we first mutate the exceptional collections for $\xi'$, to `pull out' the generators associated to new components of the boundary $\bar{D}_{\xi'}$. 
This can be done explicitly, see \cite[Prop. 3.11 and 3.29]{HK1}.  
We get a full exceptional collection for $\coh Y_{\xi'}$, say 
$\cE_{{\xi'},1}, \ldots, \cE_{\xi',{l_{\xi'}}}$, 
such that the subcollection
$\cE_{\xi',{l_{\xi'} - l_\xi +1}}, \ldots, \cE_{\xi',{l_{\xi'}}}$ is the pullback of a full exceptional collection for $\coh Y_{\xi}$, and $\cE_{{\xi'},1}, \ldots,\cE_{{\xi'}, l_{\xi'} - l_\xi}$ are exceptionals introduced by the blow-ups using Orlov's formula \cite{Orlov}. 
Performing the same sequence of mutations on the mirror side,  the resulting  distinguished collection of vanishing cycles is  $V_{{\xi'},1}, \ldots, V_{{\xi'},l_\xi'}$, as introduced above. (Say the associated thimbles are $\vartheta_{{\xi'},1}, \ldots, \vartheta_{{\xi'},l_\xi'}$.)
The upshot is that on both sides, we can go from the mirror exceptional collections   for $\xi$ to  ones for $\xi'$ by adding one exceptional object for each blow-up to the left of the list. In particular, the second diagram commutes, with fully faithful vertical maps.

\emph{Third diagram.}
From the proof of \cite[Theorem 4.9]{HK1}, we have a commutative diagram
$$
\xymatrix{
\ccoh U_{\xi} \ar[r]^-\simeq    &  \cW(M_{\xi}) \\
\ccoh Y_{\xi}  \ar[r]^-\simeq  \ar[u]_{i^\ast} & \cF^\to (w_\xi)  \ar[u]^{\rho} \\
}
$$
where $i: U_\xi \to Y_\xi$ is the inclusion, and $ \rho : \cF^\to (w_\xi) \to    \cW(M_{\xi})$ is a localisation functor. Using the quasi-isomorphism of Lemma \ref{lem:directed-category-iso-stopped-category}, $\rho$ is given by $\cF^\to (w_\xi) \simeq \cW(M_\xi, f_\xi) \to \cW(M_\xi)$, where the map $ \cW(M_\xi, f_\xi) \to \cW(M_\xi)$ is the stop-removal procedure introduced in \cite[Section 1.6]{GPS2}.
 These claims can be found in \cite[Section 4.5]{HK1}. (The reader might want to note that the key input is \cite[Theorem 4.7]{HK1}, which shows that  the $A_\infty$-bimodule structures given by $(\cF^\to (w_\xi), \cF(\Sigma_\xi))$ and $(\ccoh Y_\xi, \pperf D_\xi)$ are equivalent.) 
 Generators for $\ccoh U_\xi$ are given by restricting those for $\ccoh Y_\xi$. On the mirror side, $ \cW(M_\xi)$ is generated by the Lagrangian thimbles associated to the distinguished collection of vanishing cycles for $w_\xi$, now viewed as objects of the unstopped wrapped category. 

Now consider $\xi' \geq \xi$. Using the same notation as for the case of the second diagram, generators for  $ \cW(M_{\xi'})$  are given by taking the collection of Lagrangian thimbles $\vartheta_{{\xi'},1}, \ldots, \vartheta_{{\xi'},l_{\xi'}}$ for $w_{\xi'}$ and viewing them as elements of $ \cW(M_{\xi'})$. 
Similarly, $\ccoh U_{\xi'}$ is generated by the restrictions of $\cE_{{\xi'},1}, \ldots,\cE_{{\xi}, l_{\xi'}}$. From the discussion of the second diagram, $\cE_{{\xi'},1}, \ldots,\cE_{{\xi'}, l_{\xi'} - l_\xi}$ pull back to the zero object in $\ccoh U_{\xi}$, and $\cE_{\xi',{l_{\xi'} - l_\xi +1}}, \ldots, \cE_{\xi',{l_{\xi'}}}$ pull back to generators for $\coh U_{\xi}$. Similarly for the $\vartheta_{{\xi'},1}, \ldots, \vartheta_{{\xi'},l_{\xi'}}$ on the mirror side (again, see \cite[Prop. 3.11 and 3.29]{HK1}). The claim about the third diagram follows.\end{proof}

\begin{remark}\label{rem:diagreplacement}  Proposition \ref{prop:hms-inclusion-compatibility} is stated only up to $A_{\infty}$ quasi-isomorphism. In particular, we are free to choose new $A_\infty$ (in fact, dg) models for the Fukaya categories $\cF(\Sigma_\xi)$, $\cF^{\to} (w_\xi)$ and $\cW(M_\xi)$, by setting them to be strictly equal to their B-side side counterparts $\perf D_\xi$, $\coh Y_\xi$ and $\coh U_\xi$ (and for $\xi' > \xi$, using the B-side models for pullback/pushforward maps). Unless otherwise specified, we now take this viewpoint, which will be technically convenient for defining (homotopy) limits. 
\end{remark}


\subsection{Universal objects: $A$ side and homological mirror symmetry}\label{sec:hms_universal}

\begin{definition} We define the universal cusp Milnor fibre, $M_\un$, to be the direct limit of the Weinstein handlebodies $M_\xi$ under our system of inclusions:
$$M_\un = \varinjlim_{\xi \in \CY} M_\xi.
$$
Similarly, we define $\Sigma_\univ$ to be the direct limit 
$$
\Sigma_\un = \varinjlim_{\xi \in \CY} \Sigma_\xi.
$$
\end{definition}

Note that $M_\un$ and $\Sigma_\un$ are well-defined Weinstein manifolds, albeit of infinite rather than finite type. 
From Section \ref{sec:mirror-system-compatibility}, $M_\un$ inherits two additional structures. 
First,  $M_\un$ is naturally the total space of an almost-toric fibration, with countably infinitely many nodal fibres. One explicit model for it is to take $\pi_\un: M_\un \to \bR_\univ^2$, where $\bR^2_\univ$ is the integral affine structure on $\bR^2$ such that we have nodal fibres over each point of $\bZ^2 \backslash \{ 0\}$ and invariant direction through the origin (with branch cut the half-line starting at the critical point which does not go through the origin). 
Second, $M_\un$ admits a map $w_\un: M_\un \to \bC$, naturally generalising a Lefschetz fibration: it has countably infinitely many critical points, all of which are of complex Morse type, and smooth fibres  $\Sigma_\un$.

\begin{definition} \label{def:asidecats}Define the Fukaya categories $\cW(M_{\xi})$, $\cF (\Sigma_\xi)$, and $\cF^\to (w_\xi)$ according to Remark \ref{rem:diagreplacement} to ensure that our diagrams are strictly commutative, so we can take their limits and homotopy colimits as we did in Section \ref{sec:bsidesystems}. 
The  wrapped Fukaya category of $M_\un$ is 

$$\cW(M_{\un}):= \varprojlim_{\xi \in \CY} \cW(M_\xi).$$

The Fukaya category of $\Sigma_\un$ is 

$$
 \cF(\Sigma_{\un}):=  \holim_{\xi \in \CY} \cF (\Sigma_\xi).
$$

The directed Fukaya category of $w_\un$ is 

$$
\cF^\to (w_{\un}):= \holim_{\xi \in \CY}  \cF^\to (w_\xi).
$$

\end{definition}

The infinite type cases aren't considered in standard set-ups for Fukaya categories (for instance \cite{Seidel-book, GPS1}), which is why we work instead with limits.  
In the case of the wrapped category, this parallels the definition in \cite{Seidel_biased} of the symplectic cohomology of Weinstein manifolds of infinite type. 

The following is now immediate from the definitions together with Proposition \ref{prop:hms-inclusion-compatibility}. 

\begin{corollary} 	   \label{cor:universal-exact-hms}
We have a triple of $A_\infty$ quasi-isomorphisms: 
$$
\coh Y_\un \simeq   \cF^\to (w_\un) \qquad
\perf U_{\un}  \simeq    \cW(M_{\un}) \qquad
\perf D_{\un} \simeq   \cF(\Sigma_{\un})
$$
For any $\xi \in \CY$, these are compatible with the obvious inclusions of, or restrictions to, the finite-level isomorphisms.

\end{corollary}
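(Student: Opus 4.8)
The plan is to obtain all three equivalences, together with their compatibility with the finite levels, by passing to the (co)limit in Proposition \ref{prop:hms-inclusion-compatibility}; the only real point to check is that the relevant colimits and limits behave well. The first thing I would record is that $\CY$ is a \emph{directed} poset: given $\xi, \eta \in \CY$ with toric fans $F_\xi, F_\eta$ and blow-up data $(m^\xi_\nn), (m^\eta_\nn)$, take a common smooth refinement $F$ of $F_\xi$ and $F_\eta$ (any fan in $\Z^2$ can be made smooth by further corner blow-ups) and set $m^\zeta_\nn := \max(m^\xi_\nn, m^\eta_\nn)$ for each ray $\nn$ of $F$, with the convention that $m^\xi_\nn = 0$ when $\nn$ is not a ray of $F_\xi$, and likewise for $\eta$; the resulting $\zeta \in \CY$ satisfies $\zeta \geq \xi$ and $\zeta \geq \eta$. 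Hence $\varinjlim_{\xi\in\CY}$ is a filtered colimit and $\varprojlim_{\xi\in\CY}$ a cofiltered limit.

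For $\Coh(Y_\un)\simeq\cF^\to(w_\un)$ and $\Perf(D_\un)\simeq\cF(\Sigma_\un)$: Proposition \ref{prop:hms-inclusion-compatibility} provides, for every $\xi'\geq\xi$, a homotopy-commuting square relating the level-$\xi$ and level-$\xi'$ HMS equivalences to the (fully faithful) pullback/inclusion functors. A filtered colimit of such a system can be computed naively --- object set the union, morphism complexes the ordinary colimits, $A_\infty$ operations passing to the colimit --- and the induced functor on the colimit is a quasi-equivalence because cohomology commutes with filtered colimits. The compatibility with a fixed finite level $\xi$ is then immediate: it says exactly that the structural functor $\Coh(Y_\xi)\to\Coh(Y_\un)$ (resp.\ its $A$-side analogue) and the level-$\xi$ equivalence fit into a homotopy-commuting triangle, which holds by construction of the colimit.

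For $\Coh(U_\un)\simeq\cW(M_\un)$ the outline is the same, but now one uses the third square of Proposition \ref{prop:hms-inclusion-compatibility}, with vertical arrows the restriction functors $i^\ast\colon\Coh(U_{\xi'})\to\Coh(U_\xi)$ and the Viterbo restrictions $\io^\ast\colon\cW(M_{\xi'})\to\cW(M_\xi)$; here $\Coh(U_\un)=\Perf(\Uuniv)=\varprojlim_\xi\Coh(U_\xi)$ by the Lemma above, and $\cW(M_\un)=\varprojlim_\xi\cW(M_\xi)$ by definition, both in the sense of \cite{GR1} and \cite{Seidel_biased}, and one takes the induced map on these cofiltered limits. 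The step requiring the most care --- throughout, but especially for this last limit --- is that Proposition \ref{prop:hms-inclusion-compatibility} only gives commutativity \emph{up to $A_\infty$ homotopy}, so to form a genuine (homotopy) (co)limit one should first rectify: promote $\xi\mapsto\Coh(Y_\xi)$, $\xi\mapsto\cF^\to(w_\xi)$, etc.\ to strict (or homotopy-coherent) diagrams indexed by $\CY$, and promote the HMS equivalences to a coherent natural transformation. Since $\CY$ is a poset this is standard (cofibrant replacement in the projective model structure on diagram categories, or passage to $\infty$-categorical nerves), and for the two filtered colimits the coherence issue disappears once one works with the naive model above. With the diagrams rectified, the three equivalences and their compatibility with each finite level follow formally, which is why the corollary is an immediate consequence of Proposition \ref{prop:hms-inclusion-compatibility}.
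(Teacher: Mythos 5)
Your proposal follows exactly the route the paper takes: the paper's entire proof is the single sentence that the corollary is ``immediate from the definitions together with Proposition \ref{prop:hms-inclusion-compatibility}'', i.e.\ one passes the finite-level commuting squares to the (co)limits defining the universal categories. The details you supply --- directedness of $\CY$ via common fan refinements, and the rectification needed because the squares only commute up to $A_\infty$ homotopy --- are precisely what the paper leaves implicit, and they are correct.
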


\section{The mirror to $\Bir_e(\bP^2, \Omega)$} \label{sect:symplectomorphisms}

\subsection{Birational transformations of $\bP^2$}
Let \[ \Omega =d \log x \wedge d \log y  \] be the standard meromorphic volume form on $\bP^2$ with simple poles along the components of the toric boundary divisor. We define $\Bir(\bP^2, \Omega)$ as the group of birational transformations of $\bP^2$ which preserve $\Omega$ on their domain of definition, and 
$\Bir(\bP^2, \pm\Omega)$ as the group of transformations which preserve $\Omega$ up to sign. (Elements of $\Bir(\bP^2, \Omega)$ are often refered to as `symplectic birational transformations', though we will not use this terminology to avoid confusion.) Blanc \cite{Blanc} proved that 
$$ \Bir(\bP^2, \Omega) = \langle \SL_2(\bZ), E, (\bC^\ast)^2 \rangle$$
where 
 the complex torus acts by left multiplication: 
 \[(x,y) \mapsto (\lambda_1 x, \lambda_2 y);\]
  a matrix 
\[ A = \begin{pmatrix} a & b \\ c & d \end{pmatrix} \in \SL_2(\bZ) \]
acts on  $(\bC^\ast)^2$ by 
\[ (x,y) \mapsto  (x^a y^c, x^b y^d);\]
 and $E$ acts by 
\[ (x,y) \mapsto (x, y (x+1)\inv).\] 
To generate the group $\Bir(\bP^2, \pm\Omega)$, we also include the element 
\[ \begin{pmatrix} -1 & 0 \\ 0 & 1 \end{pmatrix} \in \GL_2(\bZ) \]
acting by $(x,y) \mapsto (x\inv, y)$. We also introduce notation for the $\SL_2(\bZ)$ conjugates of $E$: given any $\nn \in (\Z^2)_{\mathrm{prim}}$, we define
$ E_{\nn} = \phi_A\inv E \phi_{A}$
for any $A \in \SL_2(\Z)$ with $A \nn=(0,1)$. These are often called `elementary transformations'. 

\begin{definition}
Let $\Bir_e(\bP^2, \Omega)$, respectively $\Bir_e(\bP^2, \pm \Omega)$, be the subgroup of $\Bir(\bP^2, \pm \Omega)$ generated by $E$ and $\SL_2(\bZ)$, respectively $\GL_2(\bZ)$.  
\end{definition}

For a pair $\xi_1, \xi_2 \in \CY$, the embeddings $\iota_{\xi_i}: (\C^*)^2 \into Y_{\xi}$ determined by the explicit toric model of $Y_{\xi}$ define a canonical map
\[ \Bir(Y_{\xi},Y_{\eta}) \to \Bir(\bP^2);\]
 moreover, it makes sense to say that a given element of $\Bir(\bP^2)$ is regular on a surface $Y_{\xi}$ or $U_{\xi}$. 

\begin{example}\label{ex:SL2Zaut} Let $A \in \GL_2(\Z)$ and $\xi \in \CY$, and let $A_*{\xi}$ index the surface where we have modified our explicit toric model for $Y_{\xi}$ by precomposing with the action of $A$; it is clear that $A$ extends to a biholomorphism $Y_{\xi} \to Y_{A_*\xi}.$
See Figure \ref{fig:Aaut} for an example. \end{example} 

\begin{example} \label{ex:Eaut} The transformation $E$ extends to a biholomorphism between the blowup of $\bP^1 \times \bP^1 $ at the distinguished point on $D_{(0,1)}$  and the blowup of the first Hirzebruch surface at the distinguished point on $D_{(0,-1)}$. Both $\bP^1 \times \bP^1$ and $\bF_1$ admit toric morphisms to $\bP^1$; the birational map $\bP^1 \times \bP^1 \dashrightarrow \bF_1$ factors as the blowup at a point in the fibre over $[-1:1] \in \bP^1$ and then the blowdown of the proper transform of the fibre itself. See Figure \ref{fig:Eaut}. \end{example}

\begin{figure} \label{fig:Aaut}

\begin{tikzpicture}
\draw[thick, ->] (0,0) -- (1.5,0) node[right] {};
\draw[thick, ->] (0,0) -- (0,1.5) node[above] {};
\draw[thick, ->] (0,0) -- (-1.5,-1.5) node[below] {};

\draw[-> ,thick] (2.2, 0) arc (260 : 280 : 4);
\node at (3,.75) {$\begin{pmatrix}-1 & 0 \\ 0 & -1 \end{pmatrix}$};

\begin{scope}[xshift=6cm, yshift=0];
\draw[thick, ->] (0,0) -- (-1.5,0) node[right] {};
\draw[thick, ->] (0,0) -- (0,-1.5) node[above] {};
\draw[thick, ->] (0,0) -- (1.5,1.5) node[below] {};
\end{scope}
\end{tikzpicture}
\caption{The birational transformation $(x,y) \mapsto (x\inv, y\inv)$ extends to a biholomorphism between two different toric compactifications of $(\C^*)^2$ to $\bP^2$s pictured above.} 
\end{figure} 

\begin{figure}
\begin{tikzpicture}\label{fig:Eaut}
\draw[thick, ->] (0,0) -- (1.5,0) node[right] {};
\draw[thick, ->] (0,0) -- (0,1.5) node[above] {};
\draw[thick, ->] (0,0) -- (-1.5,0) node[below] {};
\draw[thick, ->] (0,0) -- (0,-1.5) node[right] {};
\node at (0,0) {$\cdot$};
\node at (0,.75) {$\times$};
\draw[-> ,thick] (2.2, 0) arc (260 : 280 : 4);
\node at (3,.75) {$E$};

\begin{scope}[xshift=6cm, yshift=0];
\draw[thick, ->] (0,0) -- (1.5,0) node[right] {};
\draw[thick, ->] (0,0) -- (0,1.5) node[above] {};
\draw[thick, ->] (0,0) -- (-1.5,1.5) node[below] {};
\draw[thick, ->] (0,0) -- (0,-1.5) node[right] {};
\node at (0,0) {$\cdot$};
\node at (0,-.75) {$\times$};
\end{scope}
\end{tikzpicture}
\caption{The cluster transformation $E: (x,y) \mapsto (x, y (1+x)\inv)$ defines a regular map $\Bl_{(-1, \infty)} \bP^1 \times \bP^1 \to \Bl_{(-1, 0)} \bF_1$.}  
\end{figure}

\begin{prop}\label{prop:extendphi} If $\phi \in \Bir_{e}(\bP^2, \pm \Omega)$ defines a biholomorphism  $Y_{\xi} \to Y_{\phi_* \xi}$, then $\phi$ restricts to biholomorphisms $\phi: U_{\xi} \to  U_{\phi_*\xi}$ and 
		$\phi: D_{\xi} \to D_{\phi_*\xi}$. 
Moreover, for any $\eta > \xi $, there exists an element $\phi_* \eta \in \CY$ such that the following diagram commutes: 
\[ 
\begin{tikzcd}
Y_{\eta} \arrow[r, "\phi"] \arrow[d] & Y_{\phi_* \eta} \arrow[d]    \\
Y_{\xi}           \arrow[r, "\phi"]                    & Y_{\phi_* \xi}.
\end{tikzcd}
\]

\end{prop}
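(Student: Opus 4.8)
The plan is to reduce everything to the two explicit facts established just before the statement: (i) for $\phi\in\{A\in\GL_2(\Z)\}$ the claims are elementary (Example \ref{ex:SL2Zaut}), and (ii) for $\phi=E$ the computation of $\lim_{t\to 0}E(\lambda_1 t^{n_1},\lambda_2 t^{n_2})$ shows that $E$ sends each $D_\nn$ to $D_{E_*(\nn)}$ preserving the canonical $\C^*$-charts on the interiors, hence carrying distinguished points to distinguished points. Since by hypothesis $\phi\in\Bir_e(\bP^2,\pm\Omega)=\langle\GL_2(\Z),E\rangle$ and $\phi$ happens to be regular (a biholomorphism) $Y_\xi\to Y_{\phi_*\xi}$, I would first dispose of the statement that $\phi$ restricts to biholomorphisms of $U$ and $D$: a biholomorphism of the pairs automatically carries interior to interior and boundary to boundary provided it respects the divisorial structure, and for that it suffices to know $\phi$ maps each $D_\nn$ to some $D_{\nn'}$, which follows by writing $\phi$ as a word in $\GL_2(\Z)$ and $E$ and composing the piecewise-linear maps on $\zm$ computed in (i) and (ii). (One should note $\phi_*\xi$ is indeed in $\CY$: its toric fan is the image of that of $\xi$ under this PL map, and the blow-up multiplicities transport accordingly.)

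Next I would handle the commuting square. Let $\eta>\xi$ be obtained from $\xi$ by a single blow-up — either a corner blow-up at a nodal point of $D_\xi$, or an interior blow-up at the distinguished point of some $D_{\xi,\nn}$. Since a composite of the square for single blow-ups gives the square for any $\eta>\xi$, it is enough to treat one blow-up at a time. Write $x\in D_\xi$ for the center. The key point is that $\phi$, being a biholomorphism $Y_\xi\to Y_{\phi_*\xi}$ of log CY pairs, sends $x$ to a point $\phi(x)\in D_{\phi_*\xi}$ of the \emph{same type}: a nodal point goes to a nodal point because $\phi$ preserves the stratification of $D$ (it is an isomorphism of the divisor $D_\xi$ onto $D_{\phi_*\xi}$ as established in the first part, hence matches up the self-intersection data and in particular the singular locus), and if $x$ is the distinguished interior point of $D_{\xi,\nn}$ then $\phi(x)$ is the distinguished interior point of $D_{\phi_*\xi,\,\phi_*\nn}$ precisely by the ``preserves the $\C^*$-chart'' conclusion of (i)–(ii) applied to the word representing $\phi$. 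Given this, I set $\phi_*\eta\in\CY$ to be the surface obtained from $\phi_*\xi$ by the corresponding blow-up at $\phi(x)$ (a corner blow-up, resp. an interior blow-up at the distinguished point of $D_{\phi_*\xi,\phi_*\nn}$, which increments the corresponding multiplicity $m_{\phi_*\nn}$); this is manifestly an element of $\CY$. By the universal property of blow-ups, since $\phi\colon Y_\xi\to Y_{\phi_*\xi}$ carries the center $x$ to the center $\phi(x)$, it lifts uniquely to a biholomorphism $Y_\eta\to Y_{\phi_*\eta}$ commuting with the two blow-down maps, which is exactly the asserted square.

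Finally one checks compatibility of this lift with the canonical $\C^*$-identifications and with iteration, so that the construction $\eta\mapsto\phi_*\eta$ is consistent: if $\eta'>\eta>\xi$ the two ways of building $\phi_*\eta'$ agree because blow-ups at distinct (or nested) centers commute, and on each boundary interior the lifted $\phi$ still preserves the $\C^*$-chart since it did so on $Y_\xi$ and blowing up a distinguished point does not disturb the chart on the \emph{other} components, while on the newly created component the chart is pinned down by the blow-down map.

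\textbf{Main obstacle.} The routine parts are the universal-property lift and the bookkeeping of the PL action on $\zm$; the genuine content — and the step I expect to require the most care — is verifying that a biholomorphism $\phi\colon Y_\xi\to Y_{\phi_*\xi}$ necessarily sends a \emph{distinguished} interior blow-up point to a \emph{distinguished} one, i.e. that the preferred point ``$-1$'' on each $D_\nn^{\mathrm{int}}$ is transported correctly. For $\phi=E$ this is exactly the displayed limit computation preceding the proposition, and for $\GL_2(\Z)$ it is immediate; but for a general word in these generators one must chase the composite of the induced maps on the $\C^*$-charts and confirm that each factor fixes the marked point $-1$, which is where the explicit normalization of the toric model (all blow-ups at the ``$-1$ points'') is essential. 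If instead $x$ were a \emph{non}-distinguished interior point the conclusion would fail, so the hypothesis that we have already arranged the distinguished complex structure, together with the fact that $E$ and $\GL_2(\Z)$ preserve it, is doing real work here.
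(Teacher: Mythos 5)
Your proposal follows essentially the same route as the paper, which in fact states this proposition as a direct consequence of the preceding discussion: the $\GL_2(\Z)$ case is immediate, the displayed limit computation shows $E$ sends each $D_\nn$ to $D_{E_*(\nn)}$ preserving the canonical $\C^*$-charts (hence distinguished points to distinguished points and corners to corners), and the claim for a general word follows since this chart-preserving property is closed under composition. Your additional explicit steps (lifting via the universal property of the blow-up, one centre at a time, and the consistency check for iterated blow-ups) are exactly the bookkeeping the paper leaves implicit, and your identification of the "distinguished point goes to distinguished point" verification as the real content is accurate.
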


\begin{proof} Since the boundary divisors $D_{\xi}$ and $D_{\phi_*\xi}$ are each characterised as the set of points in $Y_{\xi}$ or $Y_{\phi_*\xi}$ (respectively) on which $\Omega$ has a simple pole, any biholomorphism between $Y_{\xi}$ and $Y_{\phi_*{\xi}}$ preserving $\Omega$ will necessarily restrict to a biholomorphism between the two boundary components and a biholomorphism between the two interiors as well. 

Now let $\phi \in \Bir_{e}(\bP^2, \Omega)$ define a biholomorphism $Y_{\xi} \to Y_{\phi_*\xi}$ and choose any $\eta > \xi$. It suffices to consider the case when $\phi$ is one of our distinguished generators of $\Bir_{e}(\bP^2, \Omega)$ and $Y_{\eta}$ is constructed by blowing up either a corner point or a distinguished interior point on $D_{\xi}$. Call this point $p$, and note that $\phi$ automatically extends to a biholomorphism $Y_{\eta} \to \mathrm{Bl}_{\phi(p)} Y_{\phi_*{\xi}}$. If $\phi(p)$ is itself a distinguished point on $D$, then $\mathrm{Bl}_{\phi(p)} Y_{\phi_*{\xi}}$ (along with the explicit toric model factoring through that of $Y_{\xi}$) is an element of $\CY$. This will evidently be the case if $p$ is either a corner point or if $p$ is a distinguished interior point and $\phi \in \GL_2(\Z)$. So it suffices to check that $E$ sends the distinguished interior points to distinguished interior points. 

In coordinates we have that for any $(n_1, n_2) \in (\Z^2)_{\prim}$,
\[  \lim_{t \to 0} \, E( \lambda_1 t^{n_1}, \lambda_2 t^{n_2} ) = 	\begin{cases}	 \lim_{t \to 0} \,( \lambda_1 t^{n_1}, \lambda_2 t^{n_2}) &n_1 > 0 \text{ or } (n_1,n_2) = (0,1) \text{\, and } \lambda_1\neq 1 \\
													  \lim_{t \to 0} \,( \lambda_1 t^{n_1}, \lambda_1 \lambda_2 t^{n_1+n_2}) &n_1 <  0 \\ \end{cases}\]
Comparing with Equation \eqref{eq:Dn},
we conclude firstly that $E$ must map $D_{\nn}$ to $D_{E_*( \nn)}$, where $E_*( \nn)$ is the image of $\nn \in \zm$ under the piecewise-linear transformation implicitly defined above, and secondly that the maps 
\[E|_{D_{\nn}^\mathrm{int}}: D_{\nn}^{\mathrm{int}} \to  D_{E_*(\nn)}^{\mathrm{int}} \]
preserve the natural $\C^*$ coordinate charts on the interiors of the divisor components. In particular, $E$ will always send the distinguished point in $D_{\nn}$ to the distinguished point in $D_{E_*(\nn)}$ as desired.
\end{proof} 

\begin{remark} The above proposition fails if we consider all the elements in the larger group $\Bir (\bP^2, \pm \Omega)$: for example, note that the map 
 $(x,y) \mapsto (2 x, y)$
  defines a volume-preserving automorphism of $\bP^2$ which fails to extend to other surfaces in our system. However, the following lemma shows that this is a limited phenomenon. \end{remark}

\begin{lemma}\label{lemma:compatibility}Assume that $\xi_1 \in \CY$ is such that $h^0(Y_{\xi_1}, -K_{Y_{\xi_1}})=1$ and $m_{\nn_1}, m_{\nn_2} \geq 1$ for some pair of linearly independent elements of $(\Z^2)_{\prim}$. Then any element $\phi \in \Bir(\bP^2)$ extending to a biholomorphism $\phi: Y_{\xi_1} \to Y_{\xi_2}$ must be an element of $\Bir_e(\bP^2, \pm \Omega)$. 
\end{lemma}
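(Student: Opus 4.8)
The plan is to show $\phi\in\Bir(\bP^2,\pm\Omega)$, then reduce — after composing with an element of $\Bir_e(\bP^2,\pm\Omega)$ — to the case where $\phi$ is a biholomorphism of the pair $(Y_{\xi_2},D_{\xi_2})$ inducing the trivial piecewise-linear transformation on the fan, and finally show that such a biholomorphism is the identity. For the first step: since $h^0(-K)$ is a biholomorphism invariant we also have $h^0(Y_{\xi_2},-K)=1$, so $D_{\xi_2}$ is the unique effective anticanonical divisor of $Y_{\xi_2}$, forcing $\phi(D_{\xi_1})=D_{\xi_2}$; thus $\phi$ is an isomorphism of log Calabi--Yau pairs, and if $\Omega_{\xi_i}$ denotes the pullback of $\Omega$ along the explicit toric model then $\phi^\ast\Omega_{\xi_2}$ is a meromorphic two-form on $Y_{\xi_1}$ with simple poles exactly along $D_{\xi_1}$, hence $\phi^\ast\Omega_{\xi_2}=c\,\Omega_{\xi_1}$ for some $c\in\bC^\ast$. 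Because the boundary is maximally degenerate $D_{\xi_1}$ has a node, and comparing the iterated Poincar\'{e} residue of $\phi^\ast\Omega_{\xi_2}$ at that node with the iterated residue of $\Omega_{\xi_2}$ at its image pins down $c$: in the toric normalization these iterated residues all equal $\pm 1$ and they transform naturally under isomorphisms of pairs up to the sign of a transposition of the two local branches, so $c=\pm 1$ and $\phi\in\Bir(\bP^2,\pm\Omega)$.

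For the reduction step: by \cite{Blanc} we have $\Bir(\bP^2,\pm\Omega)=\langle\Bir_e(\bP^2,\pm\Omega),(\bC^\ast)^2\rangle$, and the homomorphism sending a birational map to its induced piecewise-linear transformation of $\bR^2$ kills $(\bC^\ast)^2$; hence the piecewise-linear transformation $\bar\phi$ induced by $\phi$ is also induced by some $\psi\in\Bir_e(\bP^2,\pm\Omega)$. Using that $\GL_2(\bZ)$ is regular on every $Y_\xi$ (Example~\ref{ex:SL2Zaut}), that $E_\nn$ is regular on $Y_\xi$ once $m_\nn\geq 1$ (Example~\ref{ex:Eaut} and Proposition~\ref{prop:extendphi}), and that the members of $\CY$ are rigid within their deformation class, hence determined by their combinatorial type, I would arrange $\psi$ to be regular on $Y_{\xi_1}$ with $\psi\colon Y_{\xi_1}\xrightarrow{\ \sim\ }Y_{\xi_2}$. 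Then $g:=\phi\circ\psi^{-1}$ is a biholomorphism of $(Y_{\xi_2},D_{\xi_2})$ inducing the trivial piecewise-linear transformation, and $Y_{\xi_2}$ still has $h^0(-K)=1$ and positive $m$ in two linearly independent directions.

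For the final step: since $g$ fixes each boundary component and each node it acts on every $D_\nn^{\mathrm{int}}\cong\bC^\ast$ by a scaling $z\mapsto a_\nn z$, and when $m_\nn\geq 1$ there is a chain of exceptional curves over the distinguished point $z=-1$ of $D_\nn$ and over no other point, so $g$ must fix that point and $a_\nn=1$. Thus $g$ fixes $D_\nn^{\mathrm{int}}$ pointwise whenever $m_\nn\geq 1$, and likewise $g$ fixes each interior exceptional curve and so preserves the open torus $\iota_{\xi_2}((\bC^\ast)^2)$; triviality of the induced piecewise-linear transformation then forces $g|_{(\bC^\ast)^2}$ to be a translation $T_\lambda$, and $T_\lambda$ fixing $D_{\nn_1}^{\mathrm{int}}$ and $D_{\nn_2}^{\mathrm{int}}$ pointwise forces $\lambda$ to be torsion. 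The remaining point, which I expect to be the main obstacle, is to exclude a nontrivial torsion $\lambda$: positivity of $m$ in two independent directions only bounds the residual ambiguity by a finite group, so one must use $h^0(-K)=1$ to see that the blow-up directions span $\bZ^2$ over $\bZ$ (not merely over $\bQ$), so that no nontrivial $T_\lambda$ extends over all the exceptional chains. Granting this, $g=\id$ and $\phi=\psi\in\Bir_e(\bP^2,\pm\Omega)$; a secondary gap to fill is the claim in the previous paragraph that $\psi$ may be taken regular on $Y_{\xi_1}$.
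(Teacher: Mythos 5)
Your first step is correct and agrees in substance with the paper's: $h^0(-K_{Y_{\xi_1}})=1$ forces $\phi(D_{\xi_1})=D_{\xi_2}$, hence $\phi^\ast\Omega_2=c\,\Omega_1$, and $c=\pm1$ follows from a normalisation argument (the paper integrates $\Omega_i$ over the generator of $\ker(H_2(U_{\xi_i};\Z)\to H_2(Y_{\xi_i};\Z))$ rather than taking iterated residues at a node; the two normalisations are equivalent). From there the paper takes a different route: the composite $p_{\xi_2}\circ\phi_Y\colon Y_{\xi_1}\to\bar{Y}_{\xi_2}$ is a second toric model of $Y_{\xi_1}$, and \cite[Proposition 3.7]{HK1} factors the induced birational map of toric surfaces as $A\circ E_{\nn_1}\circ\cdots\circ E_{\nn_k}$ with respect to \emph{some} torus embeddings $\iota_i'$ adapted to the blown-up points; the hypothesis on the $m_{\nn_i}$ is then used only to compare $\iota_i'$ with $\iota_i$. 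Your ``secondary gap'' --- producing $\psi\in\Bir_e(\bP^2,\pm\Omega)$ with the same tropicalisation as $\phi$ \emph{and} regular as a map $Y_{\xi_1}\to Y_{\xi_2}$ --- is essentially the content of that factorisation theorem, and I do not see a softer way to fill it: an arbitrary word in the $E_{\nn}$ and $\GL_2(\Z)$ with the correct piecewise-linear action has no reason to have its indeterminacy resolved by the particular blow-ups present in $\xi_1$. As written, your reduction step rests on the paper's key input without proving it.

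Your ``main obstacle'' is genuine, but the fix you propose fails: $h^0(-K)=1$ does \emph{not} force the blow-up directions to generate $\Z^2$ over $\Z$. Take the smooth complete fan with the eight rays $(\pm1,0)$, $(0,\pm1)$, $(1,\pm1)$, $(-1,\pm1)$; the four diagonal components have self-intersection $-1$ and the four axial ones $-2$. Blowing up twice at the distinguished point of each diagonal component makes the anticanonical cycle have self-intersections alternating $-3,-2$, hence negative definite, so $h^0(-K)=1$; yet the rays with $m_\nn\geq 1$ generate only the index-two sublattice $\{(n_1,n_2): n_1+n_2 \text{ even}\}$, while containing linearly independent pairs. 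The involution $(x,y)\mapsto(-x,-y)$ multiplies the natural coordinate on $D_\nn^{\mathrm{int}}$ by $(-1)^{n_1+n_2}$, so it fixes the distinguished point of every blown-up component and lifts to a biholomorphism of this $Y_\xi$; but it moves the distinguished point of $D_{(1,0)}$, whereas every element of $\Bir_e(\bP^2,\pm\Omega)$ preserves distinguished points wherever it is regular (by the computation preceding Proposition \ref{prop:extendphi}), so it does not lie in $\Bir_e(\bP^2,\pm\Omega)$. Thus the residual torsion in your last step cannot be excluded under the stated hypotheses; one needs the $\nn$ with $m_\nn\geq1$ to generate $\Z^2$ (e.g.\ $\det(\nn_1,\nn_2)=\pm1$). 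For what it is worth, the paper's own proof elides the same point --- two blown-up points on linearly independent components determine $\iota_2$ only up to the finite group $\{\lambda\in\cst:\lambda^{\nn_1^\perp}=\lambda^{\nn_2^\perp}=1\}$ of order $|\det(\nn_1,\nn_2)|$, not uniquely --- though this is harmless in its applications, where the relevant $\xi$ have $m_\nn\geq1$ on all rays of a complete fan. So you correctly located both weak points, but neither is repaired in the proposal.
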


\begin{proof}For this proof it will be useful notationally to distinguish the underlying morphism from $Y_{\xi_1}$  to  $Y_{\xi_2}$ from the corresponding birational transformation of $\bP^2$; we denote the former by $\phi_Y$. 

 Let $\iota_{i}: (\C^*)^2 \to Y_{\xi_i}$ be the canonical inclusions.  Recall that the $\iota_i$ define which points we call distinguished on the boundary divisor $\bar{D}_{\xi_i}$; the toric model $p_{\xi_2}: Y_{\xi_2} \to \bar{Y}_{\xi_2}$, which contracts exceptional divisors to distinguished points, uniquely determines $\iota_2$ as soon as two of those points lie on components of the boundary divisor indexed by linearly independent elements of $(\Z^2)_{\prim}$ as we have assumed.

 For $i=1,2$, let $\Omega_{i}$ denote the meromorphic volume form on $Y_{\xi_i}$ satisfying
$\iota_{i}^* \Omega_{i}= \Omega.$
Recall (\cite[Section 5.1]{Looijenga} or \cite[Section 3]{Friedman}) that the $\Omega_{i}$ are normalised such that
\[ \int_{[\gamma_{i}]} \Omega_{i} = 1 \]
where $[\gamma_{i}] \in H_2(U_{\xi_i}; \Z)$ is the class of the torus which generates 
\begin{align} \label{eqn:kernel} K_{i}= \ker(H_2(U_{\xi_i};\Z) \to H_2(Y_{\xi_i}; \Z)) \cong \Z, \end{align}
where the orientation on $\gamma_{i}$ is determined by the orientation on the boundary divisors $D_{\xi_i}$. Since $h^0(Y_{\xi_1}, -K_{Y_{\xi_1}})=1$ we conclude that $\phi_{Y}^* \, \Omega_{2}= \lambda \Omega_1$ for some $\lambda \in \C^*$. The same condition also implies that each $D_{\xi_i}$ is the unique divisor representing the anticanonical class, so 
\[ \phi_Y(U_{\xi_1})=\phi_Y(Y_{\xi_1} \setminus D_{\xi_1})= Y_{\xi_2} \setminus D_{\xi_2} = U_{\xi_2},\] whence $(\phi_{Y})_*(K_1)=K_2$. Thus $\phi_{Y}^*(\Omega_{2})=\pm \Omega_{1}$. 

Assume now that $\phi_{Y}^*(\Omega_{2})= \Omega_{1}$, so $\phi_Y$ respects the orientations on the $D_{\xi_i}$. The composition $\pi_2 \circ \phi_Y: Y_{\xi_1} \to \bar{Y}_{\xi_2}$  gives us a change of toric model in the sense of \cite{HK1} (what we might call a change in \emph{inexplicit} toric model). 
By \cite[Proposition 3.7]{HK1}, the birational map $\phi_M: \bar{Y}_{\xi_1} \dashrightarrow \bar{Y}_{\xi_2}$  
can be factored as a sequence of elementary transformations: there exist embeddings $\iota_i': (\C^*)^2 \into  \bar{Y}_{\xi_2}$ such that the exceptional divisors contracted by $p_i$   map to the $\iota_i'$-distinguished points, and \[ (\iota_2')\inv \phi_Y  \iota_1' = A \circ E_{\nn_1} \circ \cdots \circ E_{\nn_k} \]
for some $\nn_1, \cdots, \nn_k \in (\Z^2)_{\mathrm{prim}}$ and $A \in \SL_2(\Z)$. But then the second condition on $Y_{\xi_1}$ implies that $\iota_i' = \iota_i$, so $\phi = (\iota_2')\inv \phi_Y  \iota_1 \in \Bir_e(\bP^2, \Omega).$
 \end{proof}

\begin{remark}\label{rem:droppingcondition} Note that we can drop the condition that $h^0(Y_{\xi_i}, -K_{Y_{\xi_i}})=1$ at the cost of adding the assumption that $\phi: Y_{\xi_1} \to Y_{\xi_2}$ restricts to a biholomorphism $\phi: U_{\xi_1} \to U_{\xi_2}$ of the interiors. \end{remark}

\begin{remark} The conditions on the surfaces $Y_{\xi_i}$ appearing in the above lemma are satisfied as soon as $m_{\nn} > 2$ for each component $D_{\nn}$ of $D_{\xi_i}$ (since in this case $D_{\xi_i}$ is negative definite); in particular, the set of all $\xi_i \in \CY$ indexing such $Y_{\xi_i}$ is cofinal in $\CY$.
 \end{remark}

We can package the relationship between the biholomorphisms of the surfaces in our system  and elements of $\Bir_{e}(\bP^2, \pm \Omega)$ using more categorical language: 

 \begin{theorem} \label{thm:autisos} Let $\sch$ be the category of schemes over $\C$, and $\prs$ its pro-completion. Given any object $A$ of $\sch$, respectively  $\prs$, let $\Aut_\sch(A)$, respectively $\Aut_{\prs}(A)$, denote its group of automorphisms in the appropriate category. There are group isomorphisms
  \begin{align*} 
\Aut_{\prs}(Y_{\univ}) \cong \Bir_{e}(\bP^2, \pm \Omega) \\ 
 \Aut_{\sch}(U_{\univ}) \cong \Bir_e(\bP^2, \pm \Omega).
\end{align*} 
\end{theorem}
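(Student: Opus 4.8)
The plan is to build the two isomorphisms by hand, constructing a map in each direction and checking they are mutually inverse, leaning heavily on Proposition \ref{prop:extendphi} and Lemma \ref{lemma:compatibility}. First I would treat $\Aut_{\sch}(U_\univ)$, since $U_\univ = \varinjlim_\xi U_\xi$ is an honest scheme. To produce a homomorphism $\Bir_e(\bP^2, \pm\Omega) \to \Aut_{\sch}(U_\univ)$: given $\phi \in \Bir_e$, pick any $\xi$ on which $\phi$ defines a biholomorphism $Y_\xi \to Y_{\phi_*\xi}$ (such $\xi$ exist and are cofinal, e.g. blow up enough interior points so that each distinguished point in every relevant direction is hit; $\phi$ being a finite word in $E$ and $\GL_2(\Z)$, one sees directly from Example \ref{ex:SL2Zaut}, Example \ref{ex:Eaut}, and the piecewise-linear bookkeeping before Proposition \ref{prop:extendphi} which $\xi$ work). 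By Proposition \ref{prop:extendphi}, $\phi$ then restricts to compatible biholomorphisms $U_\eta \to U_{\phi_*\eta}$ for all $\eta \geq \xi$, hence passes to the colimit to give an automorphism $\phi^{U_\univ}$ of $U_\univ$; independence of the auxiliary choice of $\xi$ is immediate because any two choices have a common upper bound. Functoriality in $\phi$ (that $(\phi\psi)^{U_\univ} = \phi^{U_\univ}\psi^{U_\univ}$) follows by choosing $\xi$ simultaneously adapted to $\phi$, $\psi$ and $\phi\psi$, again using cofinality.

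Next, injectivity: if $\phi^{U_\univ} = \id$, then $\phi$ restricts to the identity on $U_\xi$ for some $\xi$ containing a dense torus $\iota_\xi((\C^*)^2)$, hence $\phi = \id$ in $\Bir(\bP^2)$ (a birational map of $\bP^2$ is determined by its restriction to any nonempty open set). For surjectivity — this is the crux — take $\Phi \in \Aut_{\sch}(U_\univ)$. Because $U_\univ$ is locally of finite type and the $U_\xi$ form a filtered system of finite-type open subschemes exhausting it, $\Phi$ and $\Phi^{-1}$ must each carry some $U_\xi$ into some $U_{\xi'}$ and conversely; intersecting and enlarging, one finds $\xi_1 \leq \xi_2$ such that $\Phi$ restricts to an open immersion $U_{\xi_1} \hookrightarrow U_{\xi_2}$, and in fact, by symmetry and the finite-type structure, an \emph{isomorphism} $U_{\xi_1} \to U_{\xi_2}$ after possibly enlarging $\xi_1$ (one should be a little careful here: the precise statement is that $\Phi$ induces an isomorphism between the two open subschemes it identifies, and one arranges $\xi_1, \xi_2$ to be large enough — cofinally large — that Lemma \ref{lemma:compatibility} applies). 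One then observes this isomorphism $U_{\xi_1} \to U_{\xi_2}$ comes from a biholomorphism $Y_{\xi_1} \to Y_{\xi_2}$: the log CY surfaces are determined by their interiors (the boundary being, up to the corner-blow-up ambiguity, the unique anticanonical divisor, cf.~the proof of Lemma \ref{lemma:compatibility}), so the isomorphism of interiors extends to the compactifications after possibly a further corner modification, which does not change $U$. Now Lemma \ref{lemma:compatibility} (its hypotheses holding by cofinality, cf.~the Remark after it: take $\xi_1$ with all $m_\nn > 2$) tells us the resulting element of $\Bir(\bP^2)$ lies in $\Bir_e(\bP^2, \pm\Omega)$, and unwinding definitions shows this element maps to $\Phi$ under our homomorphism. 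Hence surjectivity.

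For the pro-object statement, $\Aut_{\prs}(Y_\univ) \cong \Bir_e(\bP^2, \pm\Omega)$, I would argue in parallel but dualize. An automorphism of $Y_\univ = \varprojlim_\xi Y_\xi$ in $\prs$ is, by the universal property of pro-objects (a morphism $\varprojlim_\xi Y_\xi \to \varprojlim_\eta Y_\eta$ is an element of $\varprojlim_\eta \varinjlim_\xi \Hom_{\sch}(Y_\xi, Y_\eta)$), the data of, for each $\eta$, a morphism $Y_{\xi(\eta)} \to Y_\eta$ for some $\xi(\eta) \geq \eta$, compatible under the blow-down maps. The birational transformation of $\bP^2$ underlying any such $Y_\xi \to Y_\eta$ (via the canonical torus embeddings) is well-defined and independent of all choices, giving a map $\Aut_{\prs}(Y_\univ) \to \Bir(\bP^2)$; one checks the image lands in $\Bir_e(\bP^2, \pm\Omega)$ again via Lemma \ref{lemma:compatibility} applied at a cofinal level, and that it is a group homomorphism. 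Conversely $\phi \in \Bir_e$ gives, via Proposition \ref{prop:extendphi}, a compatible system of biholomorphisms $Y_{\phi_*^{-1}\eta} \to Y_\eta$ (choosing for each $\eta$ an adapted preimage), i.e.~an automorphism of $Y_\univ$ in $\prs$; and the two constructions are visibly inverse. Alternatively — and this may be cleaner to write — one deduces the $Y_\univ$ statement from the $U_\univ$ statement by noting that restriction to interiors gives a map $\Aut_{\prs}(Y_\univ) \to \Aut_{\sch}(U_\univ)$ which is injective (an isomorphism of $Y_\univ$ is determined by its effect on the dense torus) and whose composite with $\Bir_e \to \Aut_{\sch}(U_\univ)$ and the inverse isomorphism recovers $\Bir_e$; a short diagram chase then gives bijectivity.

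\textbf{Main obstacle.} I expect the essential difficulty to be surjectivity in the $U_\univ$ case, specifically the step of descending an abstract scheme automorphism $\Phi$ of the infinite-type scheme $U_\univ$ to a genuine isomorphism $U_{\xi_1} \to U_{\xi_2}$ between two finite-type pieces at a \emph{cofinally large} level where Lemma \ref{lemma:compatibility} applies, and then promoting that to a biholomorphism of the compactified surfaces. The subtlety is two-fold: first, one must use that $U_\univ$ is only locally of finite type and the $U_\xi$ are a filtered exhausting system, to ``compress'' $\Phi$ and $\Phi^{-1}$ down to finite level — this is a standard but slightly fiddly limit argument about quasi-compactness of the image of a quasi-compact open under a morphism; second, one must know that a biholomorphism of the open log CY surfaces canonically determines one of the minimal log CY compactifications up to corner blow-ups, which is where the input ``the anticanonical divisor is essentially unique'' from the proof of Lemma \ref{lemma:compatibility} and Remark \ref{rem:droppingcondition} does its work. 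Everything else — well-definedness, independence of choices, functoriality, injectivity — is routine once the colimit/limit formalism is set up.
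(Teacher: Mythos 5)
Your overall architecture (build the homomorphism out of $\Bir_e$ via Proposition \ref{prop:extendphi}, prove injectivity from density of the torus, and use Lemma \ref{lemma:compatibility} for the reverse inclusion) matches the paper's, and you correctly locate the difficulty in the surjectivity step. But at that step there are two concrete gaps, and in both cases the paper uses an idea you don't supply.

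First, for $\Aut_{\sch}(U_{\univ})$: you want to ``compress'' $\Phi$ to an isomorphism $U_{\xi_1}\to U_{\xi_2}$ between members of the system and then feed that into Lemma \ref{lemma:compatibility} (via Remark \ref{rem:droppingcondition}). What the limit formalism actually gives you is only an open immersion $f_{\alpha,\beta}\colon U_\alpha\hookrightarrow U_\beta$ representing $\Phi$; there is no a priori reason its image is all of $U_\beta$, nor equal to $U_\eta$ for any $\eta$, and arranging this by ``enlarging $\xi_1$'' is exactly the kind of statement that is only clear \emph{after} you know the underlying birational map lies in $\Bir_e$ — so as written the argument is circular at its crux. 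The paper avoids this entirely: it takes the non-surjective representative $f_{\alpha,\beta}$, passes to the inverse birational map $g\colon Y_\beta\dashrightarrow Y_\alpha$ of the \emph{compactifications}, shows its indeterminacy lies on $D_\beta$, and then proves that the minimal resolution is a sequence of corner blow-ups by analysing the total transform of $D_\beta$ (the chains of exceptional curves over non-nodal points must all be contracted since $g$ maps the total transform onto $D_\alpha$). This is also the real content behind your ``the isomorphism of interiors extends to the compactifications after a corner modification'': uniqueness of the anticanonical divisor alone does not extend a birational map across indeterminacy points; the resolution argument does, and it is what Remark \ref{rem:maps-on-interiors} is later extracted from.

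Second, for $\Aut_{\prs}(Y_{\univ})$: you propose to check the image lands in $\Bir_e$ ``via Lemma \ref{lemma:compatibility} applied at a cofinal level,'' but that lemma requires a \emph{biholomorphism} $Y_{\xi_1}\to Y_{\xi_2}$, whereas a representative $f_{\xi,\eta}\in\Hom_{\sch}(Y_\xi,Y_\eta)$ of a pro-automorphism is merely a morphism, which could contract divisors. The paper's missing ingredient here is a minimality argument: fix $\alpha$ satisfying the hypotheses of Lemma \ref{lemma:compatibility}, choose $\beta$ minimal among those admitting a representative $f_{\alpha,\beta}$, and use the factorisation $f_{\alpha,\beta}\circ g_{\gamma,\alpha}=p_{\gamma,\beta}$ (with $g$ representing $F^{-1}$) to show that any divisor contracted by $f_{\alpha,\beta}$ would force a factorisation through a further blow-up $Y_{\beta'}\to Y_\beta$, contradicting minimality; hence $f_{\alpha,\beta}$ is a biholomorphism and the lemma applies. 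Your fallback of deducing the $Y_{\univ}$ case from the $U_{\univ}$ case also presupposes that a pro-automorphism of $Y_{\univ}$ preserves the interiors, which again needs the biholomorphism step first. (Note the paper's logical order is the reverse of yours: $Y_{\univ}$ is done first, and the $U_{\univ}$ case is reduced to it by promoting $\Phi$ to an element of $\Aut_{\prs}(Y_{\univ})$.)
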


\begin{proof} There are formal equalities (\cite[Prop. VII.2]{may})
 \begin{align*} 
\Hom_{\prs} (Y_{\univ}, Y_{\univ}) = \varprojlim_{\eta \in \CY}\,\, \varinjlim_{\xi \in \CY}\,\, \Hom_{\sch}(Y_{\xi}, Y_{\eta})\\
  \Hom_{\sch} (U_{\univ}, U_{\univ}) = \varprojlim_{\xi \in \CY}\,\, \varinjlim_{\eta \in \CY}\,\, \Hom_{\sch}(U_{\xi}, U_{\eta}). 
 \end{align*} 
Say that $F \in \Aut(Y_{\univ})$ is represented by an element $f_{\xi, \eta} \in \Hom (Y_{\xi}, Y_{\eta})$ if 
\[pr_{\eta}(F) =  [f_{\xi, \eta}] \in    \varinjlim_{\xi' \in \CY}\,\, \Hom(Y_{\xi'}, Y_{\eta}) \] 
where $pr_{\eta}$ is the natural projection map from the inverse limit. 
Any $f_{\xi, \eta}$ representing $F$ is defined by the same birational map, so we obtain a well-defined map
\begin{equation} \label{eq:yautlim} 
\Aut_{\prs}(Y_{\univ}) \to \Bir(\bP^2).
 \end{equation} 
There is an analogous map 
\begin{equation} 
\label{eq:uautlim} \Aut_{\sch}(U_{\univ}) \to \Bir(\bP^2). 
\end{equation}
These maps are group homomorphisms.  

To prove the injectivity of the map from $\Aut_{\prs}(Y_{\univ})$, we use three observations. First, note that the identity $\mathrm{Id} \in \Aut_{\prs}(Y_{\univ})$ is the unique element such that for all $\eta \in \CY$, 
\begin{equation} \label{eqn:dirlim1} pr_{\eta}(\Id)= [\id_{Y_{\eta}}] \in  \varinjlim_{\xi' \in \CY}\,\, \Hom(Y_{\xi'}, Y_{\eta}). \end{equation}
Secondly, note that for all $\eta \in \CY$ and all $\xi \geq \eta$, the projection map $p_{\xi, \eta}: Y_{\xi} \to Y_{\eta}$ is the image of $\id_{Y_{\eta}}$ under the natural map 
\[ \Hom(Y_{\eta}, Y_{\eta}) \to \Hom(Y_{\xi}, Y_{\eta}), \]
so we have 
\begin{equation} \label{eqn:dirlim2} [\id_{Y_{\eta}}] = [p_{\xi, \eta}] \in \varinjlim_{\xi' \in \CY}\,\, \Hom(Y_{\xi'}, Y_{\eta}).\end{equation}
Finally, note that for any pair $\xi, \eta \in \CY$, an element $f \in \Hom(Y_{\xi}, Y_{\eta})$ is defined by $\id \in \Bir(\bP^2)$ if and only if $\xi \geq \eta$ and $f=p_{\xi, \eta}$. 

Now assume that $F \in \Aut_{\prs}(Y_{\univ})$ is in the kernel of the map to $\Bir(\bP^2)$.  Choose any $\eta \in \CY$, and then choose $\xi \in \CY$ such that $pr_{\eta}(F)$ is represented by an element $f_{\xi, \eta} \in \Hom(Y_{\xi}, Y_{\eta})$. Using the third observation above, we deduce that $f_{\xi, \eta}=p_{\xi, \eta}$. But then we know, by the equivalences of Equations \eqref{eqn:dirlim1} and \eqref{eqn:dirlim2}, that 
\[ pr_{\eta}(F) = pr_{\eta}(\mathrm{Id}) \in  \varinjlim_{\xi' \in \CY}\,\, \Hom(Y_{\xi'}, Y_{\eta}). \]
Since $\eta$ was arbitrary, we conclude that $F=\Id$, so the map $\Aut_{\prs}(Y_{\univ}) \cong \Bir_{e}(\bP^2, \pm \Omega)$ is injective as desired. The injectivity of the map $\Aut_{\sch}(U_{\univ}) \to \Bir(\bP^2)$ is proved completely analogously. 

From Proposition \ref{prop:extendphi} we know that the images of the maps in Equations \eqref{eq:yautlim} and \eqref{eq:uautlim} both contain the subgroup $\Bir_{e}(\bP^2, \pm \Omega)$, so to prove the theorem it only remains to show inclusions the other way. We begin with the automorphisms of $Y_{\univ}$. Given any $F \in \Aut(Y_{\univ})$ and $\alpha \in \CY$, define  \[ R(F, \alpha) = \{ \beta \in \Aut(Y_{\univ}) \mid F \text{ is represented by an element } f_{\alpha, \beta} \in \Hom(Y_{\alpha}, Y_{\beta})\}. \\ 
\]
Note that the set 
\[S_{F}= \{ \alpha \in \CY \mid R(F, \alpha) \neq \emptyset \}\]
 is cofinal in $\CY$ and has the property that if $\beta > \alpha \in S_{F}$ and $\alpha \in S_{F}$, then $\beta \in S_{F}$. 
 
 Now fix $F \in \Aut(Y_{\univ})$ and let $G = F\inv$.  Let $\alpha \in S_F$ be any element satisfying the conditions of Lemma \ref{lemma:compatibility}. Since any $\beta \in R(F, \alpha)$ must satisfy $b_2(Y_{\alpha}) \geq b_2(Y_{\beta})$, the set $R(F, \alpha)$ has at least one minimal element; pick one such element $\beta$. We claim that $f_{\alpha, \beta} \in \Hom(Y_{\alpha}, Y_{\beta})$ is a biholomorphism. Let $\gamma \in \CY$ be any element such that $G$ is represented by an element $g_{\gamma, \alpha} \in \Hom(Y_{\gamma}, Y_{\alpha}). $ Suppose for  contradiction that $f_{\alpha, \beta}$ contracts some divisor in $Y_{\alpha}$. 
 Note that 
\[ f_{\alpha, \beta} \circ g_{\gamma, \alpha}=p_{\gamma, \beta} \in \Hom(Y_{\gamma}, Y_{\beta}),  \]
so in other words $f_{\alpha, \beta} \circ g_{\gamma, \alpha}$ factors as a sequence of blow-ups at a set of distinguished points in $Y_{\beta}$. But then any divisor contracted by $f_{\alpha, \beta}$ must go to a distinguished point in $Y_{\beta}$, so $f_{\alpha, \beta}$ must factor through some blow-up
\[ \begin{tikzcd}
                                                                                       & Y_{\beta'} \arrow[d, "{p_{\beta', \beta}}"] \\
Y_{\alpha} \arrow[r, "{f_{\alpha, \beta}}"'] \arrow[ru, "{f_{\alpha, \beta'}}"] & Y_{\beta_0},                                           
\end{tikzcd}\]
contradicting the minimality of $\beta$. Thus  $f_{\alpha, \beta} \in \Hom(Y_{\alpha}, Y_{\beta})$ is a biholomorphism.  Lemma \ref{lemma:compatibility} then implies that the underlying morphism $f \in \Bir(\bP^2)$ is an element of  $\Bir_{e}(\bP^2, \pm \Omega)$ as desired.

Now suppose $F$ is an automorphism of $U_{\univ}$ defined by some $f \in \Bir(\bP^2)$; it remains to show that $f$ is an element of the distinguished subgroup $\Bir_{e}(\bP^2, \pm \Omega)$. We regard $F \in \Aut(U_{\univ})$ as an element of the inverse limit of Equation \eqref{eq:uautlim}, and choose some element $f_{\alpha, \beta} \in \Hom(U_{\alpha}, U_{\beta})$ representing $F$. Let $g := f\inv$ and consider the birational map 
$g: Y_{\beta} \dashrightarrow Y_{\alpha}.$ The indeterminacy locus of this map must be contained within the boundary divisor $D_{\beta}$, since otherwise there exists $y \in D_{\alpha}$ such that $f(y) \in U_{\beta}$, contradicting the invertibility of $F$. 
Resolve the indeterminacies of $g \in \Bir(Y_{\beta}, Y_{\alpha})$ by successively blowing up such points to obtain a new Calabi-Yau surface $Z$ such that the induced map $g: Z \to Y_{\alpha}$ is regular. Let $C \subset Y$ denote the total transform of $D_{\beta}$ under the map $\pi: Z \to Y_{\beta}$; note that $g(C) = D_{\alpha}$. 
The curve $C$ decomposes as the union of a cycle of $\bP^1s$ (which we write as $\Delta$) and several chains $C_1, \ldots, C_k$ of $\bP^1s$, each of which intersect a component of $\Delta$ once. The cycle $\Delta$ consists of the union of the proper transform of the components of $D_{\beta}$ and curves created by blowing up the corners of $D_{\beta}$. But since $g$ maps $C$ to $D_{\alpha}$, the map $g$ must contract each of the chains $C_i$; if we assume that $\pi: Z \to Y_{\beta}$ is the minimal resolution of $g$, then all such chains are contracted, so we conclude that the map $\pi$ factors as a sequence of corner blow-ups. Hence $Z=Y_{\beta'}$ for some $\beta' > \beta \in \CY$, so we obtain an element $g \in \Hom(Y_{\beta'}, Y_{\alpha})$. 

Now pick $\alpha_1 > \alpha$. Then again using Equation \eqref{eq:uautlim}, we can find  $\beta_1 > \beta'$ such that there is a commuting diagram 
\begin{equation} \label{eqn:Udiag}
\begin{tikzcd}
U_{\alpha_1} \arrow[r, "{f_{\alpha_1, \beta_1}}"]            & U_{\beta_1} \\
U_{\alpha} \arrow[r, "{f_{\alpha, \beta'}}"] \arrow[u, hook] & U_{\beta'} \arrow[u, hook].          
\end{tikzcd}
\end{equation}
We can again argue that after a sequence of corner blow-ups on $Y_{\beta_1}$ (yielding a surface $Y_{\beta_1'}$), we can resolve the map $g \in \Bir(Y_{\beta_1}, Y_{\alpha})$ so that there is a commutative diagram 
\[ 
\begin{tikzcd}
Y_{\alpha_1} \arrow[d] & Y_{\beta_1'} \arrow[d]   \arrow[l, "g"]  \\
Y_{\alpha}                              & Y_{\beta'} \arrow[l, "g"].
\end{tikzcd}
\]
Since $\alpha$ and $\alpha_1$ were arbitrary, we can use this procedure to define an element $G \in \Aut(Y_{\univ})$ induced by  $g \in \Bir(\bP^2)$. We have seen that $g$ must lie in $\Bir_{e}(\bP^2, \pm \Omega)$ and we conclude that $f=g\inv$ does as well.   
\end{proof} 
 
 \begin{remark}\label{rem:maps-on-interiors} Note that the argument in the second half of this proof also implies that for any biholomorphism $\phi: U_{\xi_1} \to U_{\xi_2}$, there exists some $\eta_1>\xi_1$ and $\eta_2>\xi_2$ each given by corner blow-ups 
(so $U_{\eta_i} = U_{\xi_i}$ for $i=1,2)$ 
such that $\phi$ extends to a biholomorphism
 \[ \phi: Y_{\eta_1} \to Y_{\eta_2}. \] (A similar argument also appears in \cite{Kollar}.) If we further assume that the toric model $Y_{\xi_1} \to \bar{Y}_{\xi_1}$ contracts divisors to interior points on at least two components of $\bar{D}_{\xi_1}$, then this implies that the underlying birational transformation is an element of $\Bir_{e}(\bP^2, \pm\Omega)$ (see Remark \ref{rem:droppingcondition}). 
 \end{remark}

\subsubsection{Action on $\perf U_{\univ}$}\label{sec:perfuaction}
Theorem \ref{thm:autisos} implies that $\Bir_{e}(\bP^2, \Omega)$ acts on $\Perf U_{\univ}$ by automorphisms. Since fibrant perfect complexes remain fibrant and perfect under any pushforward by a biholomorphism, there is a canonical lift of this to an action on $\perf U_{\univ}$ by dg automorphisms.

 It is also straightforward to define an action on the limit $\varprojlim \coh U_{\xi}$. One way to do this is to assign, to each $\phi \in \Bir_{e}(\bP^2, \Omega)$, a fixed $\xi_{0}$ for which there is a biholomorphism $Y_{\xi_{0}} \to Y_{\phi_*{\xi_0}}$,  and then define the map $\phi_*$ to be the unique map such that the following diagram commutes: \begin{equation} \label{eq:Uaction} \begin{tikzcd}[
  cells={nodes={minimum height=3em, minimum width=5em, text depth=0.25em}},
  column sep=huge
]
  \varprojlim_{\xi \in \CY} \coh U_{\xi} 
    \arrow[r, "\phi_*"] 
    \arrow[d,] 
  & \varprojlim_{\xi \in \CY} \coh U_{\xi} 
    \arrow[d,]  \\
  \varprojlim_{\xi > \xi_0} \coh U_{\xi}          
    \arrow[r, "\prod_{\xi > \xi_0} (\phi_{\xi,\phi_\xi})_*"]                    
  & \varprojlim_{\xi > \phi_*{\xi_0}} \coh U_{\xi}.    
\end{tikzcd}
\end{equation}
The vertical maps here are the natural restriction maps. 
It is readily verified (e.g., by checking on stalks) that the isomorphism in Equation \eqref{eq:perfu}  intertwines these two actions.

\subsection{Mirror symplectomorphisms}

\subsubsection{Constructions of symplectomorphisms} \white{.} \label{sec:mirror-constructions-exact}

\emph{Construction mirror to $E_\nn$.} Fix $\xi \in \CY$, and suppose that $\nn$ is a ray of the fan for $\xi$. Consider the following symplectomorphisms performed in sequence: 
\begin{itemize}
\item First, a sequence of nodal slides, as in \cite[Section 6.1]{Symington}, for the nodes which lie on the line $\bR \cdot \nn$. Consider an isotopy of bases recording the following moves:  if $m_{-\nn} > 0$, slide the node at position $- i \nn$ to position $-(i+1) \nn$; then slide the node at $\nn$ to $-\nn$; and, if $m_\nn > 1$, slide the node at $(i+1) \nn$ to $i \nn$, for all positive $i$. At the end of this isotopy one has a new symplectic manifold $(M_\xi', \omega_\xi')$ with a Lagrangian almost toric fibration over the new base $B_\xi'$. By a Moser argument (which is written out in detail in \cite[Section 8.3]{Evans}) there is a symplectomorphism
\begin{equation}\label{eqn:nodalslide}({E}^\vee_\nn)^{\mathrm{NS}}_\xi: (M_\xi, \omega_\xi) \to (M_\xi', \omega_\xi').\end{equation}

The bases $B_\xi$ and $B_\xi'$ can be identified outside away from a compact set containing the support of the isotopy, and thus so can the manifolds $M_{\xi}$ and $M_\xi'$. We choose the symplectomorphism in Equation \ref{eqn:nodalslide} to respect this identification (it's then uniquely defined up to compactly supported Hamiltonian isotopy).

\item Second, perform a cut transfer \cite[Definition 2.1]{Vianna-CP2}: instead of using the invariant half-line 
$\bR_{\geq -1} \cdot \nn = \bR_{\leq 1} \cdot (- \nn)$
 for the node at $-\nn$, switch to using $\bR_{\geq 1} \cdot (- \nn)$.
This doesn't change the integral affine structure itself (or the almost-toric fibration), but changes the identification of the base with $\bR^2$ by a piecewise linear transformation. For $\nn = (0,1)^T$, we take the cut transfer which fixes the half-plane with non-negative first coordinate, and applies the shear $\begin{pmatrix} 1 & 0 \\  -1 & 1  \end{pmatrix}$ to the other half-plane; and similarly for a general $\nn$, conjugating as before with a suitable element of $SL_2(\bZ)$. The base diagram obtained after the cut transfer on $B_\xi'$ is exactly $B_{{E_\nn}_\ast \xi}$, so we obtain a symplectomorphism
\begin{equation}\label{eqn:cuttransfer}({E}^\vee_\nn)^{\mathrm{CT}}_\xi:(M_\xi, \omega_\xi) \to (M_{{E_\nn}_\ast \xi}, \omega_{{E_\nn}_\ast \xi}).\end{equation}
\end{itemize}

Define $({E}^\vee_\nn)_\xi: M_\xi \to M_{{E_\nn}_\ast \xi}$ as the composition $({E}^\vee_\nn)^{\mathrm{CT}}_\xi \circ ({E}^\vee_\nn)^{\mathrm{NS}}_\xi.$

\begin{proposition}\label{prop:Eprops} 
\begin{enumerate} 

\item \label{item:prop1} For any choice of the defining data, the map $({E}^\vee_\nn)_\xi$ defined above
is an exact symplectomorphism which preserves the Liouville form at the boundary and maps exact Lagrangians with conical ends to other such Lagrangians. Moreover, $({E}^\vee_\nn)_\xi$ is well defined up to compactly supported Hamiltonian isotopy within the space of such symplectomorphisms. 

\item \label{item:prop2} For $\eta \in \CY$ with $\eta \geq \xi$, there exist compatible representatives for $(E^\vee_\nn)_\eta$ and $(E^\vee_\nn)_\xi$,  in the sense that $(E^\vee_\nn)_\eta: M_\eta \to M_{{E_\nn}_\ast \eta}$ restricts to  $(E^\vee_\nn)_\xi$ on $M_\xi \subset M_\eta$.
\end{enumerate}
\end{proposition} 

\begin{proof} 

In a collar neighbourhood of the boundary, where it is just given by the formal cut transfer, $(E^\vee_\nn)_\xi$ is the suspension of a strict contactomorphism of the boundary (with the contact form given by restricting our choice of Liouville form); the first claim follows. The only ambiguity in the definition is in the first symplectomorphism $({E}^\vee_\nn)^{\mathrm{NS}}_\xi$. The fact that two models for the nodal slide are related by compactly supported Hamiltonian isotopy is implicit in the description in Symington's work \cite{Symington} and follows from the fact that one can interpolate between any two choices of data defining the nodal slides and resulting family of symplectic forms used in the Moser argument.

One can check that (\ref{item:prop2}) holds using the description of the embedding $M_\xi \into M_{\eta}$ in terms of almost toric fibrations from Section \ref{sec:almost-toric-fibration-initial}: the nodal slide and cut transfer operations on the base $B_{\xi}$ can be restricted from the operations applied to the base  $B_\eta$.
\end{proof} 
\emph{Constructions mirror to linear maps.} 
Suppose $A \in \GL_2(\bZ) $. Then for any $\xi \in \CY$, there is an obvious symplectomorphism
$$
{A}^\vee_\xi: M_\xi \to M_{A_\ast \xi}
$$
induced by the map on integral affine bases, choosing cut-offs so that $A$ maps $\partial B_\xi$ to $\partial B_{A_\ast \xi}$. 

\begin{proposition} Given any  $A \in \GL_2(\bZ)$, the symplectomorphism ${A}^\vee_\xi$ satisfies Properties (\ref{item:prop1}) and (\ref{item:prop2}) stated in Proposition \ref{prop:Eprops}. \end{proposition} 
\begin{proof}
	 This is clear from the definition. Note that in this case ${A}^\vee_\xi: M_\xi \to M_{A_\ast \xi}$ is defined on the nose, so we do not have the ambiguity of compactly supported Hamiltonian isotopies.
 \end{proof} 

\subsubsection{Group action}

\begin{lemma}\label{lem:relations}
Suppose that we are given a relation  in $\Bir_e (\bP^2, \pm \Omega)$ between elements of $GL_2(\bZ)$ and birational transformations of the form $E_\nn$. 
Then, whenever defined, the corresponding symplectomorphisms of the forms  $(E^\vee_\nn)_\xi$ and $A^\vee_\eta$  satisfy the same relation up to compactly supported Hamiltonian isotopy (here $\xi, \eta \in \CY$ may vary for different $\nn$ or $A$).

This is compatible with the partial order: 
suppose we are given explicit toric models $\xi' \geq \xi$, $\eta' \geq \eta$, with $\xi$ and $\eta$ varying as above, 
and we work with representatives of  the $(E^\vee_\nn)_{\xi'}$ and $A^\vee_{\eta'}$ which are compatible with  the $(E^\vee_\nn)_{\xi}$ and $A^\vee_{\eta}$. 
Then the same compactly supported Hamiltonian isotopy will give the relation between the  $(E^\vee_\nn)_{\xi'}$ and $A^\vee_{\eta'}$.
\end{lemma}

\begin{proof}
Relations for $\Bir_e (\bP^2, \Omega) = \langle \SL_2 (\bZ), E \rangle$ are known from \cite{Blanc}. The only one which is not straightforward to check is the so-called `$A_2$ cluster relation', which is $P^5 = \Id$ in the notation from \cite{Blanc}. This is carefully done in \cite[Section 6.2.2]{HK2}. 

It remains to add in $\begin{pmatrix} -1 & 0 \\ 0 & 1 \end{pmatrix}$. For any $\phi \in  \Bir_e (\bP^2, \Omega)$, there exists a unique $\phi' \in \Bir_e (\bP^2, \Omega)$ such that
$$
\begin{pmatrix} -1 & 0 \\ 0 & 1 \end{pmatrix} \cdot \phi = \phi' \cdot \begin{pmatrix} -1 & 0 \\ 0 & 1 \end{pmatrix}. 
$$
Moreover, from the constructions it is immediate that 
$$
 \begin{pmatrix} -1 & 0 \\ 0 & 1 \end{pmatrix}^\vee_\xi \cdot {\phi}^\vee_\eta = (\phi')^\vee_\xi \cdot {\begin{pmatrix} -1 & 0 \\ 0 & 1 \end{pmatrix}}^\vee_\theta
$$
up to compactly supported Hamiltonian isotopy, for compatible choices of $\xi, \eta$ and $\theta$ in $\CY$. The claim about the compatibility with the partial order on $\CY$ is then also immediate from the definitions.
\end{proof}

Let $\theta$ denote our Liouville one-form on $M_\univ$. Let  $\Symp_e (M_\univ)$ be the group of exact symplectomorphisms of $M_\univ$, i.e.~diffeomorphisms $\varphi: M_\univ \to M_\univ$ such that $\varphi^\ast \theta = \theta + df$ for some compactly supported smooth function  $f$.  Let $\Ham_c (M_\univ)$ denote the normal subgroup of  $\Symp_e (M_\univ)$  consisting of compactly supported Hamiltonian isotopies of $M_\univ$.

\begin{corollary}\label{cor:map-to-symp-Muniv}

There is a well-defined map
$$
\Bir_e (\bP^2, \pm \Omega) \to  \Symp_e M_\univ  / ( \Ham_c M_\univ).
$$

\end{corollary}

\begin{proof}
For a given $\phi \in \Bir_e (\bP^2, \pm \Omega)$, we can pick a factorisation
$$
\phi =   E_{\nn_1} \cdot \ldots \cdot E_{\nn_k} \cdot A
$$
where $A \in \GL_2(\bZ)$. 
Now choose $\xi \in \CY$ such that the intermediate factors
$$\phi_i = E_{\nn_1} \cdot \ldots \cdot E_{\nn_i}   $$
are all biholomorphisms $Y_\xi \to Y_{(\phi_i)_\ast \xi}$ for $i = 1, \ldots, k$ . Now \emph{define} 
$$
\phi_\xi^\vee :=(E_{\nn_1}^\vee)_{\xi_1} \cdot \ldots \cdot (E_{\nn_k}^\vee)_{\xi_k} \cdot  A^{\vee}_{\xi_{k+1}}
$$
where $\xi_i = ( \phi_{i-1})_\ast \xi$ for $i=1, \ldots, k$. 
Suppose now we had picked a different factorisation of $\phi$; a priori $\xi$ must be replaced with some $\eta \in \CY$. For any collection of $A_2$ cluster relations used to relate the two factorisations, there exists some $\theta \in \CY$ dominating both $\xi$ and $\eta$ such that all intermediate factorisations are also realised by biholomorphisms. Now by Lemma \ref{lem:relations}, the two candidates for $\phi^\vee_\theta$ differ by a compactly supported Hamiltonian isotopy of $M_\theta$, say $\rho$.
Moreover, suppose we're given $\theta' \geq \theta$. Then the two factorisations again give different candidates for $\phi^\vee_{\theta'}$. Moreover, for each of the factorisations, we can assume that we use the same representatives for the factors of the form $(E_{\nn_i}^\vee)_{\xi_i}$ or $(E_{\nn_i}^\vee)_{\xi_i'}$ as we did initially. 
Then by Lemma \ref{lem:relations}, the two candidates for $\phi^\vee_{\theta'}$ also differ by $\rho$, now viewed as a compactly supported Hamiltonian isotopy of $M_{\theta'}$. 
Thus we get a symplectomorphism $\phi^\vee$ of $M_\univ$, well-defined up to a compactly supported Hamiltonian isotopy. Exactness of $\phi^\vee$  follows from the exactness of the $\phi^\vee_\xi$. 
\end{proof}

\subsubsection{Mirror symmetry}

\begin{proposition}\label{prop:mirror-symplecto-in-system}
Suppose $\phi \in \Bir_e (\bP^2, \pm \Omega)$ and that $\xi \in \CY$ resolves it. 
Then $\phi^\vee_\xi$ induces well-defined $A_\infty$ functors (up to $A_\infty$ homotopy)
$$\cW(M_\xi, \frak{f}_\xi) \to \cW(M_{\phi_\ast \xi}, \frak{f}_{\phi_\ast \xi}) \qquad \text{and} \qquad \cW(M_\xi ) \to \cW(M_{\phi_\ast \xi}).$$

This is compatible with the partial ordering: for $\eta \geq \xi$, the following diagrams commute up to $A_\infty$-homotopy:
$$
\xymatrix{
\cW(M_\xi, \frak{f}_\xi) \ar[r]^-{\phi^\vee_\xi }  \ar[d]_{} &  \cW(M_{\phi_\ast \xi}, \frak{f}_{\phi_\ast \xi}) \ar[d]_{} \\
\cW(M_\eta, \frak{f}_\eta) \ar[r]^-{\phi^\vee_\eta}  &  \cW(M_{\phi_\ast \eta}, \frak{f}_{\phi_\ast \eta}) \\
}
\qquad
\xymatrix{
\cW(M_\xi ) \ar[r]^-{\phi^\vee_\xi }  &  \cW(M_{\phi_\ast \xi} ) \\
\cW(M_\eta ) \ar[r]^-{\phi^\vee_\eta} \ar[u]_{}   &  \cW(M_{\phi_\ast \eta}) \ar[u]_{}  \\
}
$$
where the vertical maps are the ones constructed in Section \ref{sec:mirror-system-compatibility}. 

Moreover, under the identifications of Theorem \ref{thm:original-hms}, the action on the wrapped category is mirror to pushing forward coherent sheaves, i.e.~the following diagram commutes up to $A_\infty$ homotopy:

\begin{equation} \label{eqn:HMS-diagram}
\xymatrix{
\cW(M_\xi ) \ar[r]^-{\phi^\vee_\xi }  \ar[d]_-\simeq &  \cW(M_{\phi_\ast \xi} ) \ar[d]_-\simeq \\
\coh  U_\xi   \ar[r]_-{\phi_\ast}  & \coh U_{\phi_\ast \xi}.
}
\end{equation}
\end{proposition}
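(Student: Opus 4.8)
The plan is to verify the three assertions in sequence, reducing everything to the finite-level geometry of a single $\xi \in \CY$ that resolves $\phi$, and then invoking the HMS statement of Theorem \ref{thm:original-hms} together with the compatibility Proposition \ref{prop:hms-inclusion-compatibility}. By Lemma \ref{lem:relations} it suffices to treat the generators, i.e.\ the cases $\phi = E_\nn$ and $\phi = A \in \GL_2(\bZ)$, and to check that the resulting functors are compatible with composition up to the relations; the case of a general $\phi$ then follows by factoring $\phi = E_{\nn_1} \cdots E_{\nn_k} \cdot A$ and composing, exactly as in the proof of Corollary \ref{cor:map-to-symp-Muniv}.

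First I would establish that $\phi^\vee_\xi$ induces a functor on $\cW(M_\xi, \frak{f}_\xi)$. As noted in Section \ref{sec:mirror-constructions-exact}, both $({E}^\vee_\nn)_\xi$ and $A^\vee_\xi$ are, near $\partial M_\xi$, suspensions of strict contactomorphisms sending $\frak{f}_\xi$ to the corresponding stop on the target; hence they map the completed Liouville manifold with stop to the target completed Liouville manifold with stop, and functoriality of $\cW(-, \frak{f})$ under such maps (as in \cite{GPS1, GPS2}, or directly from the definition via exact Lagrangians with conical ends) gives the induced $A_\infty$ functor. Forgetting the stop — i.e.\ passing to the localisation that yields $\cW(M_\xi)$ from $\cW(M_\xi, \frak{f}_\xi)$, cf.\ \cite{Sylvan, GPS1} — is natural, so we get the induced functor on $\cW(M_\xi)$ as well. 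For compatibility with the partial ordering: for $\eta \geq \xi$ the inclusions $M_\xi \hookrightarrow M_\eta$ of Section \ref{sec:mirror-system-compatibility} are, near the boundary, again suspensions of strict contactomorphisms, and the moves defining $\phi^\vee_\xi$ (nodal slides supported away from $\partial B_\xi$, cut transfers matching boundaries) were chosen in Section \ref{sec:mirror-constructions-exact} to be compatible with enlarging $\xi$ to $\eta$; so the square of symplectomorphisms commutes up to compactly supported Hamiltonian isotopy, and naturality of the $A_\infty$ functors (Viterbo restriction on one hand, and $\io_\ast$ on the other) then yields the stated homotopy-commutative diagrams. One has to be slightly careful that Viterbo restriction and the symplectomorphism-induced functors commute; this is where the precise form of the inclusions as conical-end-preserving maps is used.

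The substance is the last diagram, Equation \eqref{eqn:HMS-diagram}. Here the strategy is to compare both composites on a common set of generators. By Theorem \ref{thm:original-hms}, $\cW(M_\xi)$ is generated by the images of the thimbles $\vartheta_{\xi,\bullet}$, equivalently by the restrictions of the meridian/longitude objects; under mirror symmetry these correspond to the restrictions to $U_\xi$ of the exceptional objects in \eqref{eq:exceptional-collection}, namely $\cO$ and the $p^\ast \cO(\bar D_1 + \cdots + \bar D_{\ell})$ and the $\cO_{\Gamma_{ij}}(\Gamma_{ij})$. For $\phi = A \in \GL_2(\bZ)$: the symplectomorphism $A^\vee_\xi$ is literally induced by the $\GL_2(\bZ)$-action on the integral affine base, so it permutes the visible Lagrangian section and thimbles according to the $A$-action on rays; on the algebraic side $\phi_\ast = A_\ast$ relabels the toric boundary components and the corresponding line bundles in exactly the same way (Example \ref{ex:SL2Zaut}), so the two composites agree on generators, and agreement on morphism spaces follows because both functors are induced by geometric identifications. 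For $\phi = E_\nn$: here I would trace through the almost-toric picture of Section \ref{sec:mirror-constructions-exact} — the nodal slides and cut transfer — and match it against the blow-up/blow-down description of $E$ on toric models in Example \ref{ex:Eaut} and the mutation of exceptional collections in \cite[Prop.\ 3.11, 3.29]{HK1}. Concretely, $E_\nn$ exchanges which "end" of the line $\bR\cdot\nn$ carries the blown-up points; on the mirror this is the statement that the cut transfer moves the visible thimbles over $\bR_{\geq -1}\cdot\nn$ to thimbles over $\bR_{\geq 1}\cdot(-\nn)$, which matches the mutation of the collection $\{W_{ij}\}, V_0, \ldots$ induced by the change of toric model. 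The key identity to pin down is that $E_\nn$-pushforward of $\cO_{U_\xi}$ is $\cO_{U_{E_\nn{}_\ast \xi}}$ and of $p^\ast \cO(\sum \bar D_i)$ is the corresponding line bundle downstairs — which holds because $E_\nn$ is an isomorphism on the interiors $U_\xi \cong U_{E_\nn{}_\ast\xi}$ fixing the torus $\iota$ (Proposition \ref{prop:extendphi}) — together with the corresponding identity on the mirror thimbles from the explicit almost-toric moves.

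\textbf{Main obstacle.} The hard part is the last diagram for $\phi = E_\nn$: one must check that the \emph{specific} symplectomorphism built from nodal slides plus a cut transfer in Section \ref{sec:mirror-constructions-exact} is, under the HMS dictionary of \cite{HK1}, homotopic to the functor $E_\nn{}_\ast$ on coherent sheaves — not merely that \emph{some} symplectomorphism realises $E_\nn{}_\ast$. This requires genuinely identifying the action of the cut transfer on the distinguished Lagrangian collection (section + visible thimbles over branch cuts) with the mutation of the exceptional collection \eqref{eq:exceptional-collection} that implements the change of toric model, i.e.\ reconciling the almost-toric bookkeeping with the Lefschetz-fibration bookkeeping of \cite{HK1}, as was set up in the discussion preceding Lemma \ref{lem:change-ATF-base-stop}. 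Everything else — functoriality of $\cW$, stop-removal, compatibility with inclusions — is formal given the results already quoted.
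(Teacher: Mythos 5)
Your treatment of the first two assertions (well-definedness of the functors and compatibility with the partial order) matches the paper's, which likewise reduces everything to the boundary behaviour of the almost-toric moves; the one omission is the grading discussion (the paper fixes the trivialisation of $(\Lambda^2 T^\ast M_\xi)^{\otimes 2}$ by passing to $\eta \geq \xi$ with $H^1(M_\eta;\bZ)=0$ and chooses the graded lift preserving the reference Lagrangian section), which is needed for the functors to be defined at all but is easily supplied. For the $E_\nn$ case of diagram \eqref{eqn:HMS-diagram} you correctly identify where the work lies; the paper simply outsources exactly that computation to the proof of \cite[Theorem 6.1]{HK2}, so your sketch there is consistent with the intended argument.

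The genuine gap is in your treatment of $\phi = A \in \GL_2(\bZ)$. You argue that $A^\vee_\xi$ and $A_\ast$ agree on a generating collection of objects and then assert that ``agreement on morphism spaces follows because both functors are induced by geometric identifications.'' This does not follow: two $A_\infty$ functors can agree on objects up to isomorphism without being homotopic, and the ambiguity here is not hypothetical --- the paper's own conclusion is that $A^\vee_\xi$ agrees with $A_\ast$ on $\Coh(Y_\xi)$ only \emph{up to} spherical twists in $(-2)$-components of $D_\xi$ and tensoring by a line bundle, discrepancies invisible at the level of ``which generator goes where'' on $U_\xi$ (the restrictions of the line bundles $p^\ast\cO(\sum \bar D_i)$ to $U_\xi$ are all trivial, so your generators collapse). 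The paper closes this gap with three inputs you do not have: (i) the identification of the mirrors of $i_\ast\cO_C(k)$ for $(-2)$-curves $C$ with visible Lagrangian spheres and their Lagrangian translates (\cite[Props.~5.1, 5.2]{HK2}), which pins the functor down on the spherical subcategory; (ii) a check at the level of $K$-theory via \cite[Lemma 4.20]{HK2}; and (iii) the structure theorems \cite[Theorem 2.10, Prop.~2.14]{HK2}, which constrain any such autoequivalence to the stated form, after which the residual spherical twists and line-bundle tensors die upon restricting to $U_\xi$. Without an ingredient of type (iii) --- some uniqueness or rigidity statement for the autoequivalence --- your argument does not establish commutativity of \eqref{eqn:HMS-diagram} for the $\GL_2(\bZ)$ generators.
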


\begin{proof}
As with the proof of Proposition \ref{prop:hms-inclusion-compatibility}, by \cite[Theorems 1.2 and 1.5]{Genovese}, to show that the diagrams of $A_\infty$ categories commute up to $A_\infty$-homotopy, it is enough to show that all the the corresponding diagrams of homotopy categories commute. (Compared with before, this just uses additionally that the induced functors $H^0(\phi^\vee_\xi)$ (between either a pair of stopped categories or a pair of unstopped ones) and $H^0(\phi_\ast): D(U_\xi) \to D(U_{\phi_\ast \xi})$ are fully faithful.) Note that $H^0(\phi_\ast)$ is often also just denoted $\phi_\ast$; for clarity in this proof we keep track of the difference.

The first part of the proposition largely follows from the constructions in Section \ref{sec:mirror-constructions-exact}.
 In particular, we have that $\phi^\vee_\xi$ maps $\frak{f}_\xi$ to $\frak{f}_{\phi_\ast \xi}$.
To define maps of Fukaya categories, recall that we're working with the $\bZ$ graded versions of them. Any $\xi \in \CY$ is dominated by $\eta \in \CY$ such that $H^1(M_\eta; \bZ) = 0$. In that case, there is a unique choice of trivialisation for 
$(\Lambda^2 T^\ast M_\eta)^{\otimes 2}$ 
(or $(\Lambda^2 T^\ast M_{\phi_\ast \eta})^{\otimes 2}$), which 
$\phi^\vee_\eta$ preserves.
 Thus $\phi^\vee_\xi$ preserves our choice of trivialisation in general, by restriction. There is a preferred choice of graded lift for $\phi^\vee_\xi$: the one which preserves the gradings on the `reference' Lagrangian zero sections, which are mapped to each other.
 The $A_\infty$ functor definitions are now standard, and compatibility with the partial ordering is automatic.

We now need to establish the mirror claim. First, assume that $\phi^\vee_\xi$ is a composition of maps of the form $(E_{\nn_i}^\vee)_{\xi_i}$ (in the notation of the proof of Corollary \ref{cor:map-to-symp-Muniv}, $A=\Id$). Then the additional assmptions on $\xi$ are not required, and the claim follows by inspecting the proof of \cite[Theorem 6.1]{HK2}: for each $E_{\nn_i}$, the maps
$$
H^0( (E_{\nn_i}^\vee)_{\xi_i} ):  \,
H^0( \cW(M_{\xi_i}, \frak{f}_{\xi_i}) )
\to  H^0( \cW(M_{ E_{\nn_i \ast} {\xi_i} }, \frak{f}_{E_{\nn_i \ast} {\xi_i} } ) )
\, \, 
\text{and} \, \,
H^0( \cW(M_{\xi_i}) ) \to  H^0( \cW(M_{ E_{\nn_i \ast} \xi_i} ))
$$
are known to be mirror to the pushforward maps $H^0(E_{\nn_i})_\ast$ on $D (Y_{\xi_i})$ and $D (U_{\xi_i})$, by carefully translating the relevant moves between the Lefschetz and almost-toric models for $M_{\xi_i}$. 

It remains to analyse the action of $A^\vee_\xi$ for $A \in \GL_2(\bZ)$.  First, suppose that we are given a $(-2)$ curve $C \subset U_\xi$. By  \cite[Propositions 5.1 and 5.2]{HK2},  up to nodal slides and cut transfers, the mirror to $i_\ast \cO_C (-1)$ is a visible Lagrangian sphere as in  Lemma \ref{lem:mirrors-of-(-2)-curves-blowup}, and the mirrors to $i_\ast \cO_C (k)$ for other $k \in \bZ$ are its Lagrangian translates. These objects immediately have the correct images under the action of $A^\vee_\xi$.  Second, by \cite[Lemma 4.20]{HK2}, we have an explicit description of how the HMS isomorphism descends to an isomorphism of $K$-theories:
$$
K(U_\xi) \simeq K_0 (H^0 \cW (M_\xi)) \simeq H_2(M_\xi, \partial M_\xi).
$$ 
From this, one can readily check that our mirror claim holds at the level of the $K$-theory of $\cW(M_\xi)$. 

To conclude, we use the fact that $A^\vee_\xi$ induces a equivalence of $A_\infty$ categories: $(A^\vee_\xi)_\ast: \cW(M_\xi, \mathfrak{f}_\xi) \to \cW(M_\eta, \frak{f}_{A_\ast \xi})$. 
Consider the mirror (homotopy-category level) equivalence $D(Y_\xi) \to D(Y_{A_\ast \xi})$. We want to compare this to the pushforward autoequivalence $A_\ast$. 
To do this, we use Uehara's work on autoequivalences of smooth projective surfaces \cite[Theorems 6.6 and 6.8]{Uehara-trichotomy}, the conclusion of which in the case of a log Calabi-Yau surface is summarised in \cite[Theorem 2.10]{HK2}.

To do this, we use a variant of \cite[Proposition 2.14]{HK2}. Given an (abstract) log CY2 surface $(Y,D)$, this proposition characterises autoequivalences $\psi   \in \Auteq D(Y)$ which (i) induce the identity on $K(Y)$, and (ii) such that for each $(-2)$ curve $C \subset Y$, $\psi$ fixes $i_\ast \cO_C$ and $i_\ast \cO_C(-1)$ as objects of $D(Y)$: 
any such $\psi$ must be the identity up to a finite-group ambiguity, which vanishes as soon as $\pi_1(Y \backslash D) = 0$. 
In our case, by passing to $\xi' > \xi$, we can assume without loss of generality that $\pi_1(Y_\eta \backslash D_\eta) = 0$. 
The anticanonical divisor $D_\xi$ may contain some $(-2)$ components, which necessarily form a cycle or union of chains, disjoint from all the other $(-2)$ curves in $Y_\xi$. 
We don't know the action of $A^\vee_\xi$ on the mirror to the subcategory of coherent sheaves on $Y_\xi$ with support of the union of these curves. 
However, we can constrain this action abstractly by using Ishii-Uehara and Uehara's work \cite{Ishii-Uehara, Uehara-elliptic, Uehara-trichotomy} (summarised in \cite[Lemma 2.1 and Theorem 2.10]{HK2}, see also the discussion in Section 2.4 \textit{ibid}). 
Using this, the proof of \cite[Proposition 2.14]{HK2}, adapted at the end, shows that that the action of $H^0 ( A^\vee_\xi)$ on $H^0 (\cW(M_\xi, \frak{f}_{\xi}))$ agrees with the pushforward $H^0(A_\ast)$ on $D (Y_\xi)$ up to a composition of spherical twists in $(-2)$ components of $D_\xi$ (should there be any) and a tensor with a line bundle which vanishes when pulled back to $U_\xi$. Finally, forgetting the stop, we get that the action of $H^0 (A^\vee_\xi )$ on $H^0 (\cW(M_\xi))$ agrees with the pushforward $H^0(A_\ast)$ on $D (U_\xi)$.
\end{proof}

\begin{remark}
We also expect the action $\phi^\vee_\xi: \cW(M_\xi, \mathfrak{f}_\xi) \to \cW(M_\eta, \frak{f}_{\phi_\ast \xi})$ to be mirror to $\phi_\ast : \coh (Y_\xi) \to \coh (Y_{\phi_\ast \xi})$. In order to check this, one would need:
\begin{itemize}
\item[(a)] To rule out spherical twists in components of $D_\xi$; this can be achieved by passing to $\eta \geq \xi$ such that $D_\eta$ contains no $(-2)$ curves, and considering the commutative diagram
$$
\xymatrix{
D (Y_\eta) \ar[r]^-{\phi_\ast }  \ar[d]_-\simeq &  D (Y_{\phi_\ast \eta}) \ar[d]_-\simeq \\
D (Y_\xi)   \ar[r]_-{\phi_\ast}  & D (U_{\phi_\ast \xi})
}
$$
and its mirror.

\item[(b)] To carefully identify the mirror to the action of the whole Picard group $\Pic (Y_\xi)$ by tensor on $D (Y_\xi)$. Following \cite{Abouzaid-toric, Hanlon-Hicks, HK2}, there is a clear expectation: these should correspond to Lagrangian translations in sections of $\pi_\xi$ with prescribed boundary behaviour. As the action on the wrapped category $\cW(M_\univ)$, below, is our principal objective, we do not pursue this here. 
\end{itemize}
\end{remark}

\begin{theorem}\label{thm:mirror-map}
For any $\phi \in \Bir_e(\bP^2, \pm \Omega)$, its mirror $\phi^\vee \in  \Symp_e M_\univ  / ( \Ham_c M_\univ)$ gives a well-defined $A_\infty$ automorphism of $\cW(M_\univ)$, which we denote by $\phi^\vee$. 
Under the quasi-isomorphisms  of Corollary \ref{cor:universal-exact-hms}, this is mirror to pushing forward coherent sheaves, i.e.~the following diagram commutes up to $A_\infty$ homotopy:
$$
\xymatrix{
\perf U_{\univ}   \ar[d]^-\simeq \ar[r]_-{\phi_\ast}  &  \perf U_{\univ}  \ar[d]^-\simeq\\
\cW(M_{\univ})   \ar[r]^-{\phi^\vee}  & \cW(M_{\univ})    \\
}
$$
In particular, our map $$\Bir_e (\bP^2, \pm \Omega) \to \Symp_e M_\univ / (\Ham_c M_\univ)$$ is injective. 
\end{theorem}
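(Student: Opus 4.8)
The strategy is to obtain every assertion by passing to the limit over $\CY$ in the finite-level statements of Propositions~\ref{prop:hms-inclusion-compatibility} and~\ref{prop:mirror-symplecto-in-system}, and then to extract injectivity from the resulting square together with Theorem~\ref{thm:autisos}. Fix $\phi \in \Bir_e(\bP^2, \pm\Omega)$, and let $\CY_\phi \subset \CY$ be the set of indices $\xi$ resolving $\phi$. By Proposition~\ref{prop:extendphi} this set is cofinal and upward closed, and $\xi \mapsto \phi_\ast\xi$ is an order isomorphism of $\CY_\phi$ onto the (also cofinal) set resolving $\phi^{-1}$; since a cofinal subsystem computes the same limit, the inverse limits over $\CY_\phi$ and over its image recompute $\cW(M_\univ)$ and $\Coh(U_\univ)$. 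Proposition~\ref{prop:mirror-symplecto-in-system} asserts that the functors $\phi^\vee_\xi \colon \cW(M_\xi) \to \cW(M_{\phi_\ast\xi})$, $\xi \in \CY_\phi$, commute up to $A_\infty$-homotopy with the Viterbo restrictions; together with the reindexing $\xi \mapsto \phi_\ast\xi$ they therefore assemble into an $A_\infty$ endofunctor $\phi^\vee$ of $\cW(M_\univ)$, with inverse assembled in the same way from the $(\phi^{-1})^\vee_{\phi_\ast\xi}$ — the two composites being $A_\infty$-homotopic to the identity by Lemma~\ref{lem:relations} — so $\phi^\vee$ is an automorphism. Each $\phi^\vee_\xi$ is defined up to compactly supported Hamiltonian isotopy, hence induces a functor well defined up to $A_\infty$-homotopy, so the homotopy class of $\phi^\vee$ depends only on the class of the symplectomorphism in $\Symp_e M_\univ/(\Ham_c M_\univ)$ supplied by Corollary~\ref{cor:map-to-symp-Muniv}.

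For mirror symmetry, diagram~\eqref{eqn:HMS-diagram} commutes up to $A_\infty$-homotopy for each $\xi \in \CY_\phi$, and these squares are compatible as $\xi$ varies: the top edges by the partial-order compatibility of Proposition~\ref{prop:mirror-symplecto-in-system}, the vertical HMS equivalences by the third diagram of Proposition~\ref{prop:hms-inclusion-compatibility}, and the bottom edges $\phi_\ast = (\phi^{-1})^\ast$ by functoriality of pullback along the open inclusions $U_\xi \hookrightarrow U_\eta$. Taking $\varprojlim_{\xi\in\CY_\phi}$, inverting the vertical equivalences, and identifying the limit with $\Coh(U_\univ) \simeq \cW(M_\univ)$ via Corollary~\ref{cor:universal-exact-hms} produces the commuting square in the statement. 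For injectivity: our map $\Bir_e(\bP^2,\pm\Omega) \to \Symp_e M_\univ/(\Ham_c M_\univ)$ followed by the passage from a symplectomorphism to its induced functor on $\cW(M_\univ)$ corresponds, under this square and Corollary~\ref{cor:universal-exact-hms}, to the composite $\Bir_e(\bP^2,\pm\Omega) \xrightarrow{\ \sim\ } \Aut_\sch(U_\univ) \xrightarrow{\ (-)_\ast\ } \Auteq\big(\Coh(U_\univ)\big)$, the first arrow being Theorem~\ref{thm:autisos}. So it is enough to see that $(-)_\ast$ is injective: if $f \in \Aut_\sch(U_\univ)$ has $f_\ast \cong \id$, then for every closed point $x$ the skyscraper $\cO_x \in \Coh(U_\univ)$ satisfies $\cO_{f(x)} \cong f_\ast\cO_x \cong \cO_x$, and since skyscrapers at distinct closed points admit no nonzero morphism, $f(x) = x$; as $U_\univ$ is reduced, separated, and locally of finite type over $\C$, a self-morphism fixing all closed points is the identity, so $f = \id$.

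I expect the only genuinely delicate point to be the construction in the first paragraph: one must verify that the partially-defined reindexing $\xi \mapsto \phi_\ast\xi$ produces a single $A_\infty$ functor on the inverse-limit category $\cW(M_\univ)$ with an unambiguous $A_\infty$-homotopy class, rather than merely a coherent family of functors between two a priori distinct pro-objects. This is exactly where one invokes the partial-order compatibility of Proposition~\ref{prop:mirror-symplecto-in-system} and the relations of Lemma~\ref{lem:relations}, uniformly in $\xi$; once those are in hand, the rest is formal bookkeeping with directed and inverse limits.
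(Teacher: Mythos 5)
Your proposal is correct and follows essentially the same route as the paper: the paper's proof simply cites Proposition \ref{prop:mirror-symplecto-in-system} ``together with definitions'' for the first part and the injectivity of the $\Bir_e(\bP^2,\pm\Omega)$ action for the last claim, and your argument is a careful unwinding of exactly those two steps (limits over the cofinal, upward-closed set of resolving indices, plus faithfulness of the action on $\Coh(U_{\univ})$). The only content you add beyond what the paper makes explicit is the skyscraper-sheaf argument showing that $(-)_\ast$ is a faithful action of $\Aut_{\sch}(U_{\univ})$ on $\Coh(U_{\univ})$, which is a reasonable filling-in of the paper's appeal to ``injectivity of the $\Bir_e(\bP^2,\pm\Omega)$ action.''
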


\begin{proof} We can define a strict action of $\Bir_{e} (\bP^2, \Omega)$ on $\cW(M_{\univ})$ using the identification of $\cW(M_{\univ})$ with $\varprojlim \coh U_{\xi}$ that we used in Definition \ref{def:asidecats} together with the action of $\Bir_{e}(\bP^2, \Omega)$ on $\varprojlim \coh U_{\xi}$ defined in Section \ref{sec:perfuaction}. The commutativity of the diagram then follows from Proposition \ref{prop:mirror-symplecto-in-system} (with our chosen model we can assume that the diagram in Equation \eqref{eqn:HMS-diagram} is always strictly commutative) along with the fact that the quasi-isomorphism of Equation \eqref{eq:perfu} is $\Bir_{e}(\bP^2, \Omega)$ equivariant. The injectivity claim follows from the injectivity of the $\Bir_e(\bP^2, \pm \Omega)$ action.
\end{proof}

\begin{remark}
We also get an $A_\infty$ autoequivalence of $\cF^\to (w_\un)$ (however, as noted earlier, this is more contrived), which we expect to be mirror to the action of $\phi$ on $\coh Y_\univ$ by pushforward. 
Alternatively, instead of using the almost-toric fibration, we could construct a mirror  $\tilde{A}_\xi $ to $A\in GL_2(\bZ)$  in terms of the Lefschetz fibration $w_\xi: M_\xi \to \bC$. This would be a fibred symplectomorphism, given by combining an action of a dihedral group element on the fibre near infinity(rotation and possibly spinning in a symmetry axis) together with an automorphism of the base of $w_\xi$. This would allow ready control over the action on $\cW(M_\xi, \frak{f}_{\xi})$, albeit at the cost of being much more geometrically cumbersome.
\end{remark}

We also explicitly record what we get at the level of a fixed log Calabi-Yau surface. Suppose $U$ is an open log Calabi-Yau surface with split mixed Hodge structure; there exists a log Calabi-Yau pair $(Y_\xi,D_\xi )$ such that $U\simeq U_\xi$. The cleanest statement holds when we have rigidity of the automorphisms of $U$: assume that the toric model $p_{\xi}: Y_{\xi} \to \bar{Y}_{\xi}$  contracts divisors living over at least two interior points of $\bar{D}_{\xi}$ on components indexed by linearly independent elements of $(\Z^2)_{\prim}$.  Note that the whole graded symplectic mapping class group $\pi_0 \Symp^{\gr} (M_\xi)$ acts on $\cW(M_\xi)$, by the same arguments as \cite[Corollary 2.4]{Keating-Smith}. 

\begin{corollary}\label{cor:mirror-symplecto-fixed}
Suppose $U \simeq U_{\xi} $ is an open log Calabi-Yau surface satisfying the condition above, with mirror $M  = M_\xi$.
 Fix the HMS isomorphism $\cW(M) \simeq \coh U$. Then there exists an injective map
\begin{eqnarray}
\Aut ( U )& \to & \pi_0 \Symp^{\gr} (M) \\  
\phi & \mapsto & \phi^\vee 
\end{eqnarray}
such that for each $\phi \in \Aut (U)$,  we have a commutative diagram up to $A_\infty$ homotopy:
$$
\xymatrix{
\cW(M ) \ar[r]^-{\phi^\vee }  \ar[d]_-\simeq &  \cW(M  ) \ar[d]_-\simeq \\
\coh  U  \ar[r]_-{\phi_\ast}  & \coh U 
}
$$
\end{corollary}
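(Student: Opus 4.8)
The plan is to deduce this from the universal statement (Theorem~\ref{thm:mirror-map}), using the rigidity supplied by the hypothesis on the toric model of $Y_\xi$ to pin the picture down at a single finite level.

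\emph{Reduction to $\Bir_e(\bP^2,\pm\Omega)$.} First I would identify $\Aut(U)$ with a subgroup of $\Bir_e(\bP^2,\pm\Omega)$. Given $\phi\in\Aut(U)=\Aut(U_\xi)$, Remark~\ref{rem:maps-on-interiors} produces $\eta_1,\eta_2>\xi$, each obtained from $\xi$ by corner blow-ups (so $U_{\eta_i}=U_\xi$, and $M_{\eta_i}$ is Weinstein deformation equivalent to $M_\xi$), with $\phi$ extending to a biholomorphism $\phi\colon Y_{\eta_1}\to Y_{\eta_2}$. The hypothesis that $p_\xi$ contracts divisors over interior points on at least two boundary components indexed by linearly independent primitive vectors is exactly what Remark~\ref{rem:droppingcondition} (via Lemma~\ref{lemma:compatibility}) needs in order to force the underlying birational transformation into $\Bir_e(\bP^2,\pm\Omega)$; so $\Aut(U)$ embeds into $\Bir_e(\bP^2,\pm\Omega)$ as the subgroup of elements which are ``geometric on $U$''. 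The same hypothesis gives $H^1(M_\xi;\bZ)=0$, so the grading structure on $M_\xi$ is unique, and, as in Section~\ref{sec:mirror-constructions-exact}, every symplectomorphism of $M_\xi$ has a preferred graded lift (the one fixing the grading on the reference zero-section) — so no grading ambiguity will arise.

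\emph{Construction of $\phi^\vee$.} Next I would feed $\phi\in\Bir_e(\bP^2,\pm\Omega)$ into Corollary~\ref{cor:map-to-symp-Muniv} and Theorem~\ref{thm:mirror-map}, obtaining a class in $\Symp_eM_\univ/(\Ham_cM_\univ)$ acting on $\cW(M_\univ)\simeq\Coh(U_\univ)$ by $\phi_\ast$. Because $\phi$ restricts to an automorphism of $U_\xi$ (it comes from $Y_{\eta_1}\to Y_{\eta_2}$ with $U_{\eta_i}=U_\xi$), the functor $\phi_\ast$ preserves $\Coh(U_\xi)$ inside $\Perf(U_\univ)=\varprojlim_\zeta\Coh(U_\zeta)$; dually, under $\cW(M_\univ)=\varprojlim_\zeta\cW(M_\zeta)$, that class descends to an exact symplectomorphism of $M=M_\xi$, well-defined up to compactly supported Hamiltonian isotopy. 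Concretely one realises it by composing, along a factorisation $\phi=E_{\nn_1}\cdots E_{\nn_k}\cdot A$ as in Corollary~\ref{cor:map-to-symp-Muniv}, the nodal-slide/cut-transfer symplectomorphisms $(E^\vee_{\nn_i})$ and the linear symplectomorphism $A^\vee$ of Section~\ref{sec:mirror-constructions-exact}, conjugated by the canonical identifications $M_{\eta_i}\simeq M_\xi$ coming from ``corner blow-up $=$ Lefschetz stabilisation $=$ enlargement of the almost-toric base by a node-free region'' (Lemma~\ref{lem:change-ATF-base-stop}). Independence of the factorisation and of the $\eta_i$ follows from Lemma~\ref{lem:relations}, exactly as in the proof of Corollary~\ref{cor:map-to-symp-Muniv}, so $\phi\mapsto\phi^\vee\in\pi_0\Symp^\gr(M)$ is a well-defined group homomorphism.

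\emph{The mirror square and injectivity.} The commuting diagram is then Proposition~\ref{prop:mirror-symplecto-in-system} (diagram~\eqref{eqn:HMS-diagram}) applied at level $\eta_1$ and transported to $M=M_\xi$ via the identifications just used, together with the fact, recalled before the statement, that $\pi_0\Symp^\gr(M)$ acts on $\cW(M)$. For injectivity: if $\phi^\vee$ is trivial in $\pi_0\Symp^\gr(M)$, then $\phi_\ast=\mathrm{id}$ on $\Coh(U)$, and since a biholomorphism of $U$ is recovered from its effect on skyscraper sheaves of closed points, $\phi=\mathrm{id}$. The step requiring the most care — the only one with content beyond Theorem~\ref{thm:mirror-map} — is the descent in the previous paragraph: a factorisation of $\phi$ may only be realised by biholomorphisms after auxiliary \emph{interior} blow-ups, which genuinely enlarge $M_\xi$, so one must verify that restricting the resulting symplectomorphism back to $M_\xi$ is independent of these choices and lands in $\pi_0\Symp^\gr(M_\xi)$; this is precisely where the stabilisation identifications, Lemma~\ref{lem:relations}, and the vanishing $H^1(M_\xi;\bZ)=0$ are all invoked.
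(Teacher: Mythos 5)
Your proposal is correct and follows essentially the same route as the paper: reduce to $\Bir_e(\bP^2,\pm\Omega)$ via Remark \ref{rem:maps-on-interiors}, realise $\phi$ at a finite level $\eta_1>\xi$ obtained by corner blow-ups only (so $U_{\eta_1}=U$ and $M_{\eta_1}=M$), factor into the preferred generators and apply Proposition \ref{prop:mirror-symplecto-in-system}, with well-definedness exactly as in Corollary \ref{cor:map-to-symp-Muniv}. The only divergence is at the point you yourself flag as delicate: the paper sidesteps the issue of auxiliary interior blow-ups by choosing the factorisation so that all intermediate maps are already regular on $Y_{\eta_1}$, so no "restriction" of a symplectomorphism from a strictly larger Weinstein manifold back to $M_\xi$ (which would indeed be problematic) is ever needed; your explicit injectivity argument via skyscraper sheaves is a reasonable addition that the paper leaves implicit.
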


\begin{proof}
As noted in Remark \ref{rem:maps-on-interiors}, any $\phi \in \Aut(U)$ is defined by an element of $\Bir_{e}(\bP^2, \pm \Omega)$, and extends to a biholomorphism
\[ \phi: Y_{\eta_1} \to Y_{\eta_2} \]
for some $\eta_1$ and $\eta_2 = \phi_* \eta_1$, where $Y_{\eta_1}$ and $Y_{\eta_2}$ are each constructed by a sequence of corner blow-ups on $D_{\xi} \subset Y_{\xi}$. We can choose a factorization  $\phi= \phi_k \circ \cdots  \circ \phi_1$ 
by elements in our preferred generating set so that the intermediate maps  $\phi_\ell \circ \cdots \circ \phi_1$ are regular on $Y_{\eta_1}$ for all $1 \leq \ell \leq k$, and define a symplectomorphism $\phi^\vee: M_{\eta_1} \to M_{\phi_*\eta_1}$ using this factorization. We can then appeal to Proposition \ref{prop:mirror-symplecto-in-system}: since corner blow-ups don't change the interior, we have $U_{\eta_1} =U_{\phi_* \eta_1} = U$ and $M_{\eta_1} = M_{\phi_* \eta_1} =M$, so the diagram in Equation \eqref{eqn:HMS-diagram} agrees with the diagram in the statement of the theorem. 

It remains to check only that the resulting element of the symplectic mapping class group is independent of the choice of $\eta_1$ and the factorization of $\phi$ but this follows from the same proof that appears in Corollary \ref{cor:map-to-symp-Muniv} -- one notes that given two choices $\eta_1, \eta_1'$, we can choose the $\theta$ dominating them both such that $U_{\theta} = U_{\xi}$. 
\end{proof}

\begin{remark} \label{rem:non-symplectos}

We can't expect automorphisms of an open Calabi--Yau surface to be mirror to symplectomorphisms in complete generality: for instance, in the case of $U = (\bC^\ast)^2$, the compact Fukaya category of the mirror $M = T^\ast T^2$ is generated by the zero-section $T^2$, which as a Lagrangian brane can be equipped with $(\bC^\ast)^2$'s worth of local systems. Then multiplication by an element of $ (\bC^\ast)^2$ on $U$ is mirror to an autoequivalence of the Fukaya category which transforms the local systems on this $T^2$.  
\end{remark}

\subsection{Application: automorphisms of the open cubic surface } 
If $(Y_{\xi}, D_{\xi})$ are such that $D_\xi$ contains no $(-1)$ curves, and that $D_\xi^2 \leq 2$, then any automorphism $\phi $ of $U_\xi$ extends to an automorphism of $Y_\xi$. In this case, as there is a Torelli theorem for automorphisms of $Y_\xi$ \cite{GHK2}, the mirror subgroup of $\pi_0 \Symp^\gr M_\xi$ which we construct will embed in an integral linear group. On the other hand, when these additional hypotheses are not satisfied, one can get some interesting (and non-linear) dynamical behaviours. We illustrate this with the worked example of an open cubic surface.

 Consider the log Calabi-Yau pair $(Y_C,D_C)$ obtained by blowing up twice at the distinguished point on each of the three toric boundary divisors of $\bP^2$, and let $U_C$ denote the complement $Y_C\setminus D_C$. This is a member of the family of Markov cubics studied in \cite{EH} and \cite{CL}; in the former, it is shown that $\Aut(U_C)$ is generated by two subgroups: a finite index subgroup (of index at most 24)
\[ G \simeq \Z/2\Z \star \Z/2 \Z \star \Z/2 \Z.\]
and a subgroup $H \simeq \Aut(Y_C, D_C)$
(where the right-hand side consists of the set of automorphisms of $Y_C$ fixing $D_C$ as a set). By Corollary \ref{cor:mirror-symplecto-fixed}, there is an inclusion
\[ \Aut(U_C) \into  \pi_0 \Symp^{\gr}(M_C)\]
where $M_C$ is the mirror to $Y_C$; in particular, we can conclude:
\begin{proposition}\label{prop:cubic-incl} There is an inclusion
\[ \Z/2\Z \star \Z/2 \Z \star \Z/2 \Z \into \pi_0 \Symp^{\gr}(M_C). \]
\end{proposition}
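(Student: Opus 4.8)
The plan is to assemble Proposition \ref{prop:cubic-incl} as an essentially formal consequence of Corollary \ref{cor:mirror-symplecto-fixed} together with the group-theoretic input from \cite{EH}. The first task is to check that the open cubic surface $U_C$ falls within the scope of Corollary \ref{cor:mirror-symplecto-fixed}, i.e.~that it is an open log Calabi--Yau surface with split mixed Hodge structure satisfying the rigidity hypothesis. By construction $(Y_C, D_C)$ is obtained by blowing up the distinguished $(-1)$-point twice on each of the three components of the toric boundary of $\bP^2$; it therefore carries an explicit toric model with $m_{\nn_i} = 2$ for the three rays $\nn_1, \nn_2, \nn_3$ spanning the fan of $\bP^2$, and in particular the divisors contracted by $p_{\xi}$ lie over interior points on (at least) two boundary components indexed by linearly independent primitive vectors. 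One also needs $H_1(U_C; \Z) = 0$; this can be read off from the toric picture (blowing up interior points of a toric boundary kills the relevant $H_1$), and it is in any case part of the standard package for Markov cubics. So Corollary \ref{cor:mirror-symplecto-fixed} applies and yields an injection $\Aut(U_C) \hookrightarrow \pi_0 \Symp^{\gr}(M_C)$ intertwining the two actions on $\cW(M_C) \simeq \Coh(U_C)$.

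The second step is to invoke the structure theorem for $\Aut(U_C)$. By \cite{EH}, $\Aut(U_C)$ contains the subgroup $G \simeq \Z/2\Z \star \Z/2\Z \star \Z/2\Z$ (as a subgroup of finite index, generated by the three ``reflections'' coming from the conic bundle structures on the Markov cubic). Restricting the injection from Corollary \ref{cor:mirror-symplecto-fixed} to $G$ gives the desired inclusion $\Z/2\Z \star \Z/2\Z \star \Z/2\Z \hookrightarrow \pi_0 \Symp^{\gr}(M_C)$. Strictly, one should note that the generators of $G$ do lie in $\Bir_e(\bP^2, \pm\Omega)$ rather than in the ``torus part'' $(\bC^\ast)^2$ of $\Bir(\bP^2, \pm\Omega)$ — this is automatic since every element of $\Aut(U_C)$ is, by Remark \ref{rem:maps-on-interiors}, represented by an element of $\Bir_e(\bP^2, \pm\Omega)$ (the hypothesis there on two interior blow-ups at linearly independent rays is met). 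Injectivity of the composite is immediate from injectivity in Corollary \ref{cor:mirror-symplecto-fixed}.

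I expect the main point requiring care — rather than a genuine obstacle — to be the bookkeeping needed to cite \cite{EH} correctly: namely, pinning down that the specific free product subgroup $G$ described there is realised inside $\Aut(U_C)$ by biholomorphisms (as opposed to merely birational self-maps) of the particular model $U_C = Y_C \setminus D_C$ we have fixed, and that these biholomorphisms are the ones hit by our homomorphism from $\Bir_e(\bP^2, \pm\Omega)$. Once one is comfortable that $\Aut(U_C)$ in the sense of \cite{EH} agrees with $\Aut_{\sch}(U_C)$ in our sense, the statement drops out. No new symplectic geometry is needed beyond what is already packaged in Corollary \ref{cor:mirror-symplecto-fixed}; the content is entirely in verifying the hypotheses and in transporting the known group-theoretic description of $\Aut(U_C)$ through the established HMS correspondence.
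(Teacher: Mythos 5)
Your proposal is correct and follows essentially the same route as the paper: verify that $U_C$ satisfies the hypotheses of Corollary \ref{cor:mirror-symplecto-fixed} (which hold since the toric model has $m_{\nn}=2$ on all three linearly independent rays), then restrict the resulting injection $\Aut(U_C) \hookrightarrow \pi_0\Symp^{\gr}(M_C)$ to the finite-index free-product subgroup $G \simeq \Z/2\Z \star \Z/2\Z \star \Z/2\Z$ identified in \cite{EH}. The paper additionally works out an explicit factorization of the reflection generators in terms of $E$ and $\GL_2(\Z)$, but this is illustrative rather than logically necessary for the inclusion.
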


We can understand the generators of the free product geometrically on both sides of the mirror correspondence. The description on the algebraic side is found in \cite{EH}: after  fixing any map from $Y_C$ to a degree three hypersurface in $\bP^3$, the three order two generators are given by the reflections $r_{p_1}$, $r_{p_2}$, and $r_{p_3}$ about the corner points $p_1, p_2$,  and $p_3$ (defined, for each $i$, so that the set of points $x, r_{p_i}(x)$, and $p_i$ are co-linear in $\bP^3$).

\begin{figure} \label{fig:cubicaut}

\begin{tikzpicture}
\draw[thick, ->] (0,0) -- (1.5,0) node[right] {};
\draw[thick, ->] (0,0) -- (0,1.5) node[above] {};
\draw[thick, ->] (0,0) -- (-1.5,-1.5) node[below] {};
\draw[thick, lightgray, ->] (0,0) -- (0,-1.5) node[right] {};
\node at (0,0) {$\cdot$};
\node at (.5,0) {$\times$};
\node at (1,0) {$\times$};
\node at (0,.5) {$\times$};
\node at (0,1) {$\times$};
\node at (-.5,-.5) {$+$};
\node at (-1,-1) {$+$};
\draw[-> ,thick] (2.2, 0) arc (260 : 280 : 4);
\node at (3,.75) {$E^2$};

\begin{scope}[xshift=6cm, yshift=0];
\draw[thick, ->] (0,0) -- (1.5,0) node[right] {};
\draw[thick, ->] (0,0) -- (0,1.5) node[above] {};
\draw[thick, ->] (0,0) -- (-1.5,1.5) node[below] {};
\draw[thick, lightgray, ->] (0,0) -- (0,-1.5) node[right] {};
\node at (0,0) {$\cdot$};
\node at (.5,0) {$\times$};
\node at (1,0) {$\times$};
\node at (0,-.5) {$\times$};
\node at (0,-1) {$\times$};
\node at (-.5,.5) {$+$};
\node at (-1,1) {$+$};
\draw[-> ,thick] (2.2, 0) arc (260 : 280 : 4);
\node at (2.9,1) {$\begin{pmatrix}1 & 0 \\ 0 &-1 \end{pmatrix}$} ;
\end{scope}

\begin{scope}[xshift=12cm, yshift=0cm]
\draw[thick, ->] (0,0) -- (1.5,0) node[right] {};
\draw[thick, lightgray, ->] (0,0) -- (0,1.5) node[above] {};
\draw[thick, ->] (0,0) -- (-1.5,-1.5) node[below] {};
\draw[thick, ->] (0,0) -- (0,-1.5) node[right] {};
\node at (0,0) {$\cdot$};
\node at (.5,0) {$\times$};
\node at (1,0) {$\times$};
\node at (0,.5) {$\times$};
\node at (0,1) {$\times$};
\node at (-.5,-.5) {$+$};
\node at (-1,-1) {$+$};
\end{scope}

\end{tikzpicture}

\caption{The sequence of moves recording the factorization of the reflection $r_{p_2}$; the rays illustrate the divisors appearing in the relevant compactifications of $U_{C}$. 
On a Zariski open set, this automorphism takes a point in the cubic to the other point colinear with the intersection point $y_{p_2}$ of the divisors corresponding to the rays $(1,0)$ and $(-1,-1)$ in the above diagram. In order to resolve $r_{p_2}: Y_C \dashrightarrow Y_C$, we have to blow up $p_2$, thus creating a new divisor corresponding to the grey ray in the first diagram; note that this does not change the interior $U_C$ nor the the mirror $M_C$.}
\end{figure}

To find the mirror symplectomorphisms we factorize each reflection.  Let $\iota: \cst \into \bP^2$ be the standard embedding $(x,y) \mapsto [x: y: 1]$ and let $p_2$ be the pre-image of the point $[0:1:0] \in \bP^2$ under the blow-up map $Y_{C} \to \bP^2$. One can calculate explicitly (using e.g. the map $Y_{C} \to \bP^3$ appearing in the proof of  Lemma 3.10 in \cite{CL}) that the birational map $\iota\inv \circ r_{p_2} \circ \iota \in \Bir(\cst)$ is given in coordinates 
\[ (x,y) \mapsto (x, (1+x)^2 y \inv)\]
 which factorizes as
\[ \iota\inv \circ r_{p_2} \circ \iota = \begin{pmatrix} -1 & 0 \\ 0 & 1 \end{pmatrix} \circ E^2.\] 
The other two generators can be described analogously. See Figure \ref{fig:cubicaut} for an illustration of the symplectomorphism mirror to $r_{p_2}$ (more precisely, we exhibit the relevant sequence of moves of the base diagrams of the almost toric fibrations).

\bibliography{bib}{}
\bibliographystyle{alpha}

\end{document}